\theoremstyle{plain}
\newtheorem{theorem}{Theorem}[section]
\newtheorem{conjecture}[theorem]{Conjecture}
\newtheorem{lemma}[theorem]{Lemma}
\newtheorem{proposition}[theorem]{Proposition}
\newtheorem{corollary}[theorem]{Corollary}
\newcommand{\im}{\operatorname{Im}}
\theoremstyle{definition}
\newtheorem{definition}[theorem]{Definition}
\newtheorem{remark}[theorem]{Remark}
\newtheorem{example}[theorem]{Example}
\numberwithin{equation}{section}
\author{Khac Nhuan Le and Kien Huu Nguyen}
\title{On a Conjecture of Gezmis and Pellarin}
\date{\today}
\subjclass{Primary 11M32; Secondary 11M38, 11R58, 11R59} 
\keywords{multiple zeta values,  Pellarin's multiple zeta values, trivial multiple zeta values, multiple polylogarithms, arithmetic theory of algebraic function fields.}
\begin{document}

\maketitle

\begin{abstract}
    In 2022, Gezmis and Pellarin introduced and studied the concept of trivial multiple zeta values, along with a map from the vector space spanned by these values to the vector space spanned by Thakur's multiple zeta values. Their construction allows us to generate some linear relations among the latter values using the former. In our work, we determine the structure of the kernel of the aforementioned map. As a consequence, we give an answer to a conjecture proposed by Gezmis and Pellarin regarding the injectivity of this specific map.
\end{abstract}

\section*{Introduction}

\subsection{Classical multiple zeta values} 
${}$
\par \textit{Multiple zeta values} (MZVs) are real numbers defined as the convergent series
\begin{equation} \label{eq: series representation}
    \zeta(s_1, \dotsc, s_r) = \sum \limits_{n_1 > \cdots > n_r > 0} \dfrac{1}{n_1^{s_1} \cdots n_r^{s_r}},
\end{equation}
where $s_i$ are positive integers and the first component $s_1$ is strictly greater than $1$. These values were first considered by Euler in the 18th century and have been studied in various contexts in number theory, knot theory and the theory of mixed Tate motives. One fundamental problem in the study of MZVs is the identification of all linear relations among them, which remains a challenging open question.

One of the important properties of MZVs is their representation in terms of iterated integral as follows:
\begin{equation} \label{eq: integral representation}
     \zeta(s_1, \dotsc, s_r) =  \int \limits_{1 > t_1 > \cdots > t_k > 0} \omega_1(t_1) \cdots \omega_k(t_k),
\end{equation}
where $k = s_1 + \cdots + s_r$, $\omega_i(t_i) = dt_i/(1 - t_i)$, if $i \in \{s_1, s_1 + s_2, \dotsc, s_1 + \cdots + s_r\}$, and $\omega_i(t_i) = dt_i/t_i$, otherwise. The series representation \eqref{eq: series representation} and the integral representation \eqref{eq: integral representation} provide two different ways of expanding the product of two MZVs as linear combinations of MZVs, resulting in two distinct combinatorial interpretations. The equality of the products then allows us to generate a large family of relations among MZVs called \textit{double shuffle relations}. Nevertheless, these relations are not sufficient to capture all linear relations, for instance, the well-known identity $\zeta(2,1) = \zeta(3)$ due to Euler cannot be derived from them. In order to remedy this, Ihara, Kaneko, and Zagier extended the double shuffle relations by allowing divergent series and integrals, and introduced the so-called \textit{extended double shuffle relations}, which are widely believed to determine all linear relations among MZVs (see \cite[Conjecture 1]{IKZ06}).

%%%%%%%%%%%%%%%%%%%%%

\subsection{Thakur's multiple zeta values}
${}$
\par Let us now consider the function field case. We let $\mathbf{N}$ denote the set of natural numbers, i.e., non-negative integers. The set of positive integers will be denoted by $\mathbf{N}^*$. Let $\mathbf{F}_q$ be a finite field of characteristic $p$ with $q$ elements. We denote by $A = \mathbf{F}_q[\theta]$ the polynomial ring in indeterminate $\theta$ over $\mathbf{F}_q$. Let $A^+$ denote the set of monic polynomials in $A$, and let $A^+(d)$ denote the set of monic polynomials in $A$ whose degree is equal to $d$. We let $K = \mathbf{F}_q(\theta)$ denote the fraction field of $A$, and endow on $K$ a valuation $v_{\infty}$ given by $v_{\infty}(a/b) = \deg b -  \deg a$ for $a/b \in K$, with the convention that $\deg 0 = - \infty$.  Let  $K_{\infty} = \mathbf{F}_q((1/\theta))$ denote the completion of $K$ for this valuation, and let $\mathbf{C}_{\infty}$ denote the completion of an algebraic closure of $K_{\infty}$. 

In \cite{Car35}, Carlitz studied a specific series known as the \textit{Carlitz zeta values}, defined by
\begin{equation*}
    \zeta_A(s) = \sum \limits_{a \in A^+} \dfrac{1}{a^s} \in K_{\infty},
\end{equation*}
where $s$ is a positive integer. By grouping the terms according to the degree $d \in \mathbf{N}$, one can express a Carlitz zeta value as a series of \textit{power sums} given by
\begin{equation*}
    S_d(s) = \sum \limits_{a \in A^+(d)} \dfrac{1}{a^s}.
\end{equation*}
More generally, for a tuple of positive integers $\mathtt{s} = (s_1, \dotsc, s_r)$, Thakur introduced the concept of \textit{multiple zeta values} in the function field setting, as follows:
\begin{equation*}
    \zeta_A(\mathtt{s}) = \sum\limits_{d_1 > \cdots > d_r \geq 0} S_{d_1}(s_1) \cdots S_{d_r}(s_r) = \sum \limits_{\substack{a_i \in A^+\\  \deg a_1> \dotsb > \deg a_r \geq 0}}  \dfrac{1}{a_1^{s_1} \cdots a_r^{s_r}} \in K_{\infty}.
\end{equation*}
When $\mathtt{s} = \emptyset$, we agree by convention that $\zeta_A(\emptyset) = 1$. We call $w= s_1 + \cdots +  s_r$ the \textit{weight} of $\zeta_A(\mathtt{s})$. For $d \in \mathbf{N}$, the \textit{power sum} associated with the tuple $\mathtt{s}$ is defined as 
\begin{equation*}
    S_d(\mathtt{s}) = \sum\limits_{d = d_1 > \cdots > d_r \geq 0} S_{d_1}(s_1) \cdots S_{d_r}(s_r) = \sum \limits_{\substack{a_i \in A^+\\  d = \deg a_1> \dotsb > \deg a_r \geq 0}}  \dfrac{1}{a_1^{s_1} \cdots a_r^{s_r}}.
\end{equation*}
Thakur showed in \cite{Tha10} that the product of two MZVs can be expressed as a $\mathbf{F}_q$-linear combination of MZVs, therefore, the $K$-vector space generated by Thakur's MZVs is a $K$-subalgebra of $K_{\infty}$. Furthermore, Chang showed in \cite{Cha14} that this algebra has a graded structure with respect to weight.

% Although our primary goal, as in the classical case, is to determine all linear relations among MZVs, we face a challenge due to the lack of integral representation for MZVs in function field setting. Therefore, the method of obtaining linear relations via the double shuffle relations is not applicable. However, Todd proposed a conjecture that could provide a solution to this problem. More precisely, he introduced two families of maps between the space of power sum relations and suggested that all linear relations among MZVs can be generated from only one linear relation (see \cite[Cenjecture 5.1]{Tod18}). Using this approach, Todd was able to derive some relations for certain small weights (see \cite[Subsection 5.1 - 5.4]{Tod18}).

%%%%%%%%%%%%%%%%%

\subsection{Gezmis-Pellarin's conjecture}
${}$
\par We now review some works of Gezmis and Pellarin in \cite{GP21}. Let $\{t_i\}_{i \in \mathbf{N}^*}$ and $\{X_i\}_{i \in \mathbf{N}^*}$ be two sequences of independent indeterminates. For any finite subset $\Sigma$ of $\mathbf{N}^*$, we define the character $\chi_{_\Sigma}$  by setting $\chi_{_\Sigma}(a) = \prod_{i \in \Sigma} a(t_i)$ for each polynomial $a \in A$. Here $a(t_i)$ is obtained by substituting $\theta$ with $t_i$ in the polynomial $a$. We also define $X_{\Sigma}$ as the product of $X_i$ for all $i$ in $\Sigma$. Gezmis and Pellarin investigated two series defined as
\begin{align} \label{eq: zeta}
 \zeta_A\begin{pmatrix}
\Sigma_1 & \dotsb & \Sigma_r\\
s_1 & \dotsb & s_r
\end{pmatrix}  &= \sum \limits_{\substack{a_i \in A^+\\  \deg a_1> \dotsb > \deg a_r \geq 0}}  \dfrac{\chi_{_{\Sigma_1}}(a_1) \cdots \chi_{_{\Sigma_r}}(a_r)}{a_1^{s_1} \cdots a_r^{s_r}}, \\
\lambda_A\begin{pmatrix}
\Sigma_1 & \dotsb & \Sigma_r\\
s_1 & \dotsb & s_r
\end{pmatrix} &= \sum \limits_{\substack{a_i \in A^+\\  \deg a_1> \dotsb > \deg a_r \geq 0}}  \dfrac{X_{\Sigma_1}^{q^{\deg a_1}} \cdots X_{\Sigma_r}^{q^{\deg a_r}}}{a_1^{s_1} \cdots a_r^{s_r}}, \label{eq: lambda}
\end{align}
where $\Sigma_1, \dotsc, \Sigma_r$ are disjoint subsets of $\mathbf{N}^*$, and $s_1, \dotsc, s_r$ are positive integers. We call $\Sigma = \Sigma_1 \sqcup \cdots \sqcup \Sigma_r$ the \textit{type} for the above series. The series \eqref{eq: zeta} (resp. \eqref{eq: lambda}) converges to an element of the Tate algebra (see Subsection \ref{subsec: Pellarin's multiple zeta values}) in indeterminates $t_i$ with $i \in \Sigma$ (resp. $X_i$ with $i \in \Sigma$) over $\mathbf{C}_{\infty}$. The first series \eqref{eq: zeta}, known as \textit{Pellarin's multiple zeta values},  was introduced by Pellarin in \cite{Pel16}. The second series \eqref{eq: lambda} can be considered as a variant of the \textit{multiple polylogarithms}. When $\Sigma = \emptyset$, both series coincide and reduce to Thakur's MZVs. Gezmis and Pellarin also extended these series for $\Sigma_1,\dots,\Sigma_r$ are \textit{finite weighted subsets} of $\mathbf{N}^*$, a concept which we will review in Subsection \ref{subsec: finite weighted subsets}.

Let $\Sigma$ be a fixed finite subset of $\mathbf{N}^*$ such that $|\Sigma| < q$. We define $\mathcal{Z}_{\Sigma}(K)$ as the $K$-vector space generated by Pellarin's MZVs of type $\Sigma$ given by \eqref{eq: zeta}, and define $\mathcal{L}_{\Sigma}(K)$ as the $K$-vector space generated by multiple polylogarithms of type $\Sigma$ given by \eqref{eq: lambda}. When $\Sigma = \emptyset$, we have $\mathcal{Z}_{\emptyset}(K) = \mathcal{L}_{\emptyset}(K)$, which is the $K$-algebra generated by Thakur's MZVs. Gezmis and Pellarin showed that both $\mathcal{Z}_{\Sigma}(K)$ and $\mathcal{L}_{\Sigma}(K)$ possess graded $\mathcal{Z}_{\emptyset}(K)$-module structures. They also constructed an isomorphism of graded $\mathcal{Z}_{\emptyset}(K)$-modules $\mathcal{F}_{\Sigma} \colon \mathcal{Z}_{\Sigma}(K) \rightarrow \mathcal{L}_{\Sigma}(K)$, connecting these spaces (see \cite[Theorem 5.2]{GP21}). Furthermore, they constructed a map  $\mathcal{E}_{\Sigma} \colon \mathcal{Z}_{\Sigma}(K) \rightarrow \mathcal{L}_{\Sigma}(K)$ that coincides with $\mathcal{F}_{\Sigma}$ but is defined differently. We shall review the construction of both maps $\mathcal{F}_{\Sigma} $ and $\mathcal{E}_{\Sigma} $ in Subsection \ref{subsec: multiple polylogarithms}.

Gezmis and Pellarin introduced the notion of \textit{trivial multiple zeta values} and considered the graded $\mathcal{Z}_{\emptyset}(K)$-module $\mathcal{Z}^{\text{triv}}_{\Sigma}(K)$ consisting of elements in $\mathcal{Z}_{\Sigma}(K)$ that vanish at $t_i = q^{k_i}$ with $i \in \Sigma$, for all but finitely many tuples $(k_i)_{i \in \Sigma} \in \mathbf{N}^{|\Sigma|}$. For $k \in \mathbf{N}$ and $i \in \mathbf{N}^*$, we define the element
\begin{equation*}
    \eta_k(t_i) = (-1)^k\zeta_A  \begin{pmatrix}
    \{i\} & \emptyset &  \cdots & \emptyset\\ 1 & q-1 &  \cdots &  (q-1)q^{k-1} 
    \end{pmatrix} \in \mathcal{Z}^{\text{triv}}_{\{i\}}(K).
\end{equation*}
We note that the polynomial $\eta_k(t_i)$ used in our work is denoted by $g_k(t_i)$ in \cite{GP21}. Gezmis and Pellarin established a result that characterizes the structure of $\mathcal{Z}_{\emptyset}(K)$-module $\mathcal{Z}^{\text{triv}}_{\Sigma}(K)$ using generators (see \cite[Theorem 6.10]{GP21}). More precisely, the elements $\prod_{i \in \Sigma}\eta_{k_i}(t_i)$ with $k_i \in \mathbf{N}$ for all $i \in \Sigma$ form a generating set of the $\mathcal{Z}_{\emptyset}(K)$-module $\mathcal{Z}^{\text{triv}}_{\Sigma}(K)$.
% The following result of Gezmis and Pellarin characterizes the structure of $\mathcal{Z}_{\emptyset}(K)$-module $\mathcal{Z}^{\text{triv}}_{\Sigma}(K)$ using generators (see \cite[Theorem 6.10]{GP21}).
% \begin{theorem}
%     Let $\Sigma$ be a finite subset of $\mathbf{N}^*$ such that $|\Sigma| < q$. Those elements $\prod_{i \in \Sigma}\eta_{k_i}(t_i)$ with $k_i \in \mathbf{N}$ for all $i \in \Sigma$ form a generating set of the $\mathcal{Z}_{\emptyset}(K)$-module $\mathcal{Z}^{\text{triv}}_{\Sigma}(K)$.
% \end{theorem}

We define a map $\mathcal{G}_{\Sigma}$ as the composition of the following maps:
\begin{center}
    \begin{tikzcd}
\mathcal{Z}^{\text{triv}}_{\Sigma}(K) \arrow[r, "\mathcal{F}_{\Sigma}", shift left] \arrow[r, "\mathcal{E}_{\Sigma}"', shift right] & \mathcal{L}_{\Sigma}(K) \arrow[r, "{\text{ev}}"] & \mathcal{Z}_{\emptyset}(K),
\end{tikzcd}
\end{center}
where the first map, $\mathcal{F}_{\Sigma} = \mathcal{E}_{\Sigma}$, is restricted on $\mathcal{Z}^{\text{triv}}_{\Sigma}(K)$, and the second map $\text{ev}$ is the evaluation map at $X_i = 1$ for all $i \in \Sigma$. Using $\mathcal{F}_{\Sigma}$ and $\mathcal{E}_{\Sigma}$, a trivial multiple zeta value can be mapped to linear combinations of multiple polylogarithms in two distinct combinatorial ways. The equality of the two maps generates a family of relations among multiple polylogarithms. Evaluating at $X_i = 1$ for all $i \in \Sigma$ yields a family of relations among Thakur's MZVs. This approach can be considered as a partial alternative for the double shuffle relations. Gezmis and Pellarin used this approach to recover some previously known linear relations among MZVs due to Lara Rodríguez and Thakur, as well as to derive a new family of linear relations (see \cite[Subsection 7.1]{GP21}). Regarding the map $\mathcal{G}_{\Sigma}$ defined above, Gezmis and Pellarin proposed the following conjecture (see \cite[Conjecture 6.15]{GP21}):
\begin{conjecture} \label{conj: injectivity of G_Sigma}
The map $\mathcal{G}_{\Sigma} \colon \mathcal{Z}^{\text{triv}}_{\Sigma}(K) \rightarrow \mathcal{Z}_{\emptyset}(K)$ is injective.
\end{conjecture}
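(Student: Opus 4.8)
The plan is to prove injectivity by reducing, through the structure theorem, to an explicit linear–independence statement about the images of the module generators, and then to exploit the weight grading together with the hypothesis $|\Sigma|<q$ to separate those images. By \cite[Theorem 6.10]{GP21}, the $\mathcal{Z}_\emptyset(K)$-module $\mathcal{Z}^{\text{triv}}_\Sigma(K)$ is generated by the monomials $\eta_{\mathbf{k}}:=\prod_{i\in\Sigma}\eta_{k_i}(t_i)$, where $\mathbf{k}=(k_i)_{i\in\Sigma}$ ranges over $\mathbf{N}^{|\Sigma|}$. Since the tuple attached to $\eta_{k_i}(t_i)$ has weight $1+(q-1)+(q-1)q+\cdots+(q-1)q^{k_i-1}=q^{k_i}$, the monomial $\eta_{\mathbf{k}}$ is homogeneous of weight $w(\mathbf{k})=\sum_{i\in\Sigma}q^{k_i}$. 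Because $\mathcal{G}_\Sigma$ is a morphism of graded $\mathcal{Z}_\emptyset(K)$-modules, it suffices to show two things: that the $\eta_{\mathbf{k}}$ form a $\mathcal{Z}_\emptyset(K)$-basis (so the module is free), and that the family $\{\mathcal{G}_\Sigma(\eta_{\mathbf{k}})\}_{\mathbf{k}}$ is linearly independent over $\mathcal{Z}_\emptyset(K)$; together these force $\ker\mathcal{G}_\Sigma=0$.

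The first key step is the observation that the weight $w(\mathbf{k})$ records the exponents $k_i$ almost completely. Writing $w=\sum_{j\ge 0}d_j q^j$ in base $q$, the equation $w=\sum_{i\in\Sigma}q^{k_i}$ with only $|\Sigma|<q$ summands forces each digit $d_j=\#\{i:k_i=j\}$ to satisfy $d_j<q$, so no carrying occurs and the multiset $\{k_i:i\in\Sigma\}$ is uniquely recovered from $w$. Hence generators of distinct weight are automatically separated, and the only way two generators can share a weight is for one $\mathbf{k}$ to be obtained from the other by permuting the roles of the indices in $\Sigma$. This reduces both the freeness claim and the independence of images to a statement on each fixed ``symmetric stratum'': for a single fixed multiset of exponents, the $\eta_{\mathbf{k}}$ attached to its various distributions over $\Sigma$, and their images under $\mathcal{G}_\Sigma$, must be shown to be $K$-linearly independent. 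The distinctness in the domain is clear, since the $\eta_{\mathbf{k}}$ are genuinely different elements of the Tate algebra in the variables $t_i$.

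The second, computational step is to evaluate $\mathcal{G}_\Sigma$ on the generators using the concrete description of $\mathcal{F}_\Sigma$ (equivalently $\mathcal{E}_\Sigma$). The point on which everything turns is that $\mathcal{F}_\Sigma$ is an \emph{order-sensitive} (iterated) construction rather than a multiplicative one: although $\text{ev}$ collapses every $X_i$ to $1$ and thereby forgets the type labels — sending the polylogarithm of arguments $(s_1,\dots,s_r)$ to the Thakur MZV $\zeta_A(s_1,\dots,s_r)$ — the \emph{ordered} argument tuple of the surviving values still encodes the assignment $i\mapsto k_i$. Concretely, I expect $\mathcal{G}_\Sigma(\eta_{\mathbf{k}})$ to be a $K$-linear combination of values $\zeta_A(\mathtt{s})$ over tuples $\mathtt{s}$ obtained by interleaving the single-variable tuples attached to the factors $\eta_{k_i}(t_i)$, carrying a distinguished leading tuple $\mathtt{s}(\mathbf{k})$ whose coefficient is a multinomial that is nonzero modulo $p$ \emph{precisely because} $|\Sigma|<q$. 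This is the mechanism by which the non-multiplicativity of $\mathcal{F}_\Sigma$ is expected to defeat the symmetrization introduced by $\text{ev}$.

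Finally, I would package this into a triangularity argument. Equipping the finite set of weight-$w$ tuples with a total order (lexicographic, refined by depth), I would show that the matrix expressing $\big(\mathcal{G}_\Sigma(\eta_{\mathbf{k}})\big)_{\mathbf{k}}$ against the basis $\{\zeta_A(\mathtt{s})\}$ is unitriangular up to the nonzero scalars above, with the leading tuples $\mathtt{s}(\mathbf{k})$ on the diagonal; combined with the graded linear independence of the relevant Thakur MZVs coming from the graded structure of $\mathcal{Z}_\emptyset(K)$ in \cite{Cha14}, this yields that $\mathcal{G}_\Sigma(x)=0$ forces all coefficients, hence $x$, to vanish. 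The main obstacle is exactly this last step on the symmetric stratum: controlling the off-diagonal interleaving contributions and verifying that the leading tuples $\mathtt{s}(\mathbf{k})$ for the different distributions of a common multiset remain $K$-linearly independent in $\mathcal{Z}_\emptyset(K)_w$ after $\text{ev}$ has identified the variables. This is where the hypothesis $|\Sigma|<q$ must do the decisive work, both through the non-vanishing modulo $p$ of the diagonal multinomials and through the no-carry base-$q$ separation of weights.
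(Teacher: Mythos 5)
Your proposal attempts to prove the conjecture, but the conjecture is in fact \emph{false}: the paper refutes it (Corollary \ref{ninjective}), exhibiting an explicit free $\mathcal{Z}_{\emptyset}(K)$-basis of $\ker(\mathcal{G}_{\Sigma})$ in Theorem \ref{thm: main A}. The gap sits exactly at the step you flag as the ``main obstacle'': the hoped-for $\mathcal{Z}_{\emptyset}(K)$-linear independence of the images $\mathcal{G}_{\Sigma}(\eta_{\underline{k}})$ fails completely, and your heuristic that $\mathcal{F}_{\Sigma}$ is an order-sensitive iterated construction whose image under $\mathrm{ev}$ retains a distinguished leading tuple $\mathtt{s}(\mathbf{k})$ is not what the explicit formula gives. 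By Theorem \ref{lambda} (\cite[Corollary 6.9]{GP21}), $\mathcal{E}_{\Sigma}(f)$ depends on $f$ \emph{only through its values at the points} $\theta^{q^{\underline{i}}}$, and for $f\in\mathcal{Z}^{\text{triv}}_{\Sigma}(K)$ only finitely many of these values are nonzero; after $\mathrm{ev}$ each $\lambda_A\begin{pmatrix}\{j\}\\1\end{pmatrix}$ becomes $\zeta_A(1)$, so
\begin{equation*}
\mathcal{G}_{\Sigma}(f)=\sum_{\underline{j}}\zeta_A(1)^{\sum_{r\in\Sigma}q^{j_r}}\frac{f(\theta^{q^{\underline{j}}})}{D_{\underline{j}}},
\end{equation*}
which is formula \eqref{Gsig}. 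Since $\eta_{\underline{k}}(\theta^{q^{\underline{j}}})=\delta_{\underline{j},\underline{k}}$ (Lemma \ref{lem: eta}), every generator maps to a $K$-multiple of a power of $\zeta_A(1)$:
\begin{equation*}
\mathcal{G}_{\Sigma}(\eta_{\underline{k}})=\frac{\zeta_A(1)^{\sum_{i\in\Sigma}q^{k_i}}}{D_{\underline{k}}}.
\end{equation*}
Hence $\zeta_A(1)^{\sum_{i\in\Sigma}(q^{k_i}-1)}\,\mathcal{G}_{\Sigma}(\eta_{\underline{0}})=D_{\underline{k}}\,\mathcal{G}_{\Sigma}(\eta_{\underline{k}})$ for every $\underline{k}$, and since the $\eta_{\underline{k}}$ are linearly independent (evaluate at the points, exactly as you observe for freeness), the element $-\zeta_A(1)^{\sum_{i\in\Sigma}(q^{k_i}-1)}\eta_{\underline{0}}+D_{\underline{k}}\eta_{\underline{k}}$ is a nonzero element of $\ker(\mathcal{G}_{\Sigma})$ for every $\underline{k}\neq\underline{0}$; for $\Sigma=\{1\}$ this is already $-\zeta_A(1)^{q-1}\eta_0(t_1)+D_1\eta_1(t_1)$. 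The paper's Theorem \ref{countex} shows these relations generate the whole kernel, and Theorem \ref{thm: main B} shows the image collapses to the ideal $(\zeta_A(1)^{|\Sigma|})$ --- the opposite of the rich ``interleaved'' image your triangularity argument posits.

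A secondary flaw, worth noting because it would mislead you even in a corrected attempt: your base-$q$ no-carry argument separates generators of distinct weight only for $K$-linear combinations. The relevant combinations are $\mathcal{Z}_{\emptyset}(K)$-linear, and the coefficients themselves carry weight; e.g.\ for $|\Sigma|=1$ the products $\zeta_A(1)^{q-1}\eta_0$ and $\eta_1$ both sit in weight $q$, which is precisely how the homogeneous kernel element above arises. So the grading cannot be used to decouple generators of different weights, and injectivity cannot be rescued stratum by stratum.
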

 % Assuming Conjecture \ref{conj: injectivity of G_Sigma} holds, it follows that any linear relation among Thakur's MZVs in the image of $\mathcal{G}_{\Sigma}$ can be obtained from linear relations among trivial multiple zeta values. Although the entire space $\mathcal{Z}_{\emptyset}(K)$ may not be covered by the image of $\mathcal{G}_{\Sigma}$, Gezmis and Pellarin demonstrated that their approach is effective in recovering some relations among Thakur's MZVs of low weight that were previously derived by Todd (see \cite[Subsection 7.2]{GP21}).

%%%%%%%%%%%%%%%%%%

\subsection{Results and outline}
${}$
\par 

Set $D_0 = 1$ and $D_k = \prod^{k-1}_{i=0}(\theta^{q^k} - \theta^{q^i})$ for all $k \in \mathbf{N}^*$. The main result of our work characterizes the structure of the kernel of the map $\mathcal{G}_{\Sigma}$. More precisely, it reads as follows:
\begin{theorem} \label{thm: main A}
    Let $\Sigma$ be a finite subset of $\mathbf{N}^*$ such that $|\Sigma| < q$. Then we have $\ker(\mathcal{G}_{\Sigma})$ is a free $\mathcal{Z}_{\emptyset}(K)$-module. In particular, there exists a $\mathcal{Z}_{\emptyset}(K)$-basis of $\ker(\mathcal{G}_{\Sigma})$ consisting of elements
    \begin{equation*}
        - \zeta_A(1)^{\sum_{i \in \Sigma}q^{k_i} - 1}\prod\limits_{i \in \Sigma} \eta_0(t_i) + \prod\limits_{i \in \Sigma} D_{k_i}\eta_{k_i}(t_i)
    \end{equation*}
with $k_i \in \mathbf{{N}}$ and $k_i$ are not all equal to zero for $i \in \Sigma$.
\end{theorem}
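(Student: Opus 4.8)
The plan is to transfer the problem, via the generating set $\{\prod_{i\in\Sigma}\eta_{k_i}(t_i)\}_{(k_i)\in\mathbf{N}^{|\Sigma|}}$ furnished by \cite[Theorem 6.10]{GP21}, into a linear-algebra statement over the coefficient ring $R:=\mathcal{Z}_{\emptyset}(K)$, and then to exhibit the stated family as a basis of the kernel by a spanning-plus-independence argument. Throughout, write $b_{(k_i)}:=-\zeta_A(1)^{\sum_i q^{k_i}-1}\prod_i\eta_0(t_i)+\prod_i D_{k_i}\eta_{k_i}(t_i)$ for a tuple $(k_i)\in\mathbf{N}^{|\Sigma|}$ not identically zero. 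The central, and most delicate, step is the explicit evaluation of $\mathcal{G}_{\Sigma}=\operatorname{ev}\circ\mathcal{F}_{\Sigma}=\operatorname{ev}\circ\mathcal{E}_{\Sigma}$ on the generators. Using the explicit description of $\mathcal{F}_{\Sigma}$ (equivalently $\mathcal{E}_{\Sigma}$), I would prove the identities
\begin{equation*}
\mathcal{G}_{\Sigma}\Big(\prod_i\eta_{k_i}(t_i)\Big)=\frac{\zeta_A(1)^{\sum_i q^{k_i}}}{\prod_i D_{k_i}}\quad\text{for }(k_i)\neq 0,\qquad \mathcal{G}_{\Sigma}\Big(\prod_i\eta_0(t_i)\Big)=\zeta_A(1).
\end{equation*}
In depth one this reduces to $\mathcal{G}_{\{i\}}(\eta_k(t_i))=\zeta_A(1)^{q^k}/D_k$, which rests on the Thakur-type relation $D_k\,\zeta_A(1,q-1,\dots,(q-1)q^{k-1})=(-1)^{k}\zeta_A(q^k)$ together with the Frobenius identity $\zeta_A(q^k)=\zeta_A(1)^{q^k}$ (valid since $K_{\infty}$ has characteristic $p$). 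The two displayed identities say precisely that $b_{(k_i)}\in\ker(\mathcal{G}_{\Sigma})$ and that $\mathcal{G}_{\Sigma}(\prod_i\eta_0(t_i))\neq 0$.

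Granting this, the spanning of $\ker(\mathcal{G}_{\Sigma})$ by the $b_{(k_i)}$ is formal. The key ring-theoretic observation is that $R$ is a $K$-algebra and a domain (it embeds into the field $K_{\infty}$), so every $D_{k_i}\in A\setminus\{0\}$ is a unit of $R$. Given $x\in\ker(\mathcal{G}_{\Sigma})$, I would expand $x=c_{(0)}\prod_i\eta_0(t_i)+\sum_{(k_i)\neq 0}c_{(k_i)}\prod_i\eta_{k_i}(t_i)$ with $c_{(k_i)}\in R$, solve the definition of $b_{(k_i)}$ for $\prod_i\eta_{k_i}(t_i)$ (allowed since $\prod_i D_{k_i}\in R^{\times}$), and substitute to get $x=\mu\prod_i\eta_0(t_i)+\sum_{(k_i)\neq 0}\gamma_{(k_i)}b_{(k_i)}$ for suitable $\mu,\gamma_{(k_i)}\in R$. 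Applying $\mathcal{G}_{\Sigma}$ and using $\mathcal{G}_{\Sigma}(b_{(k_i)})=0$ yields $0=\mu\,\mathcal{G}_{\Sigma}(\prod_i\eta_0(t_i))=\mu\,\zeta_A(1)$, so $\mu=0$ because $R$ is a domain and $\zeta_A(1)\neq 0$; hence $x\in\sum_{(k_i)\neq 0}R\,b_{(k_i)}$.

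For the $R$-linear independence I would argue arithmetically, testing a relation $\sum_{(k_i)\neq 0}\gamma_{(k_i)}b_{(k_i)}=0$ by specializing the variables $t_i\mapsto\theta^{q^{n_i}}$, $(n_i)\in\mathbf{N}^{|\Sigma|}$. Since $a(\theta^{q^{n}})=a^{q^{n}}$ for $a\in A$, one finds $\eta_k(\theta^{q^{n}})=(-1)^k\zeta_A(1-q^{n},q-1,\dots,(q-1)q^{k-1})$, and the vanishing theory of Carlitz--Thakur power sums (e.g. $\zeta_A(-m)=0$ whenever $(q-1)\mid m$ and $m>0$) shows this to be a triangular system: $\eta_k(\theta^{q^{n}})=0$ for $n<k$, while $\eta_k(\theta^{q^{k}})\neq 0$; moreover $\eta_0(\theta^{q^{n}})=0$ for all $n\geq 1$. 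Consequently $\prod_i\eta_0(\theta^{q^{n_i}})$ vanishes unless every $n_i=0$, so evaluating the relation at any multi-index $(n_i)\neq 0$ removes the common $\prod_i\eta_0$ term and leaves $\sum_{(k_i)\neq 0}\gamma_{(k_i)}\prod_i D_{k_i}\,\prod_i\eta_{k_i}(\theta^{q^{n_i}})=0$. Because the $\gamma_{(k_i)}\in R\subseteq K_{\infty}$ are constant in the $t_i$, ascending the componentwise partial order on multi-indices and evaluating at $(n_i)=(k_i)$ isolates $\gamma_{(k_i)}\prod_i D_{k_i}\,\prod_i\eta_{k_i}(\theta^{q^{k_i}})$ modulo already-annihilated lower terms, forcing $\gamma_{(k_i)}=0$. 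Together with the previous paragraph this shows $\{b_{(k_i)}\}_{(k_i)\neq 0}$ is an $R$-basis of $\ker(\mathcal{G}_{\Sigma})$, so the kernel is free.

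The main obstacle is the first step: pinning down the closed forms for $\mathcal{G}_{\Sigma}$ on the generators, and especially the collapse $\mathcal{G}_{\Sigma}(\prod_i\eta_0(t_i))=\zeta_A(1)$. The point is that $\mathcal{G}_{\Sigma}$ is not multiplicative across the disjoint variables: a naive multiplicative guess would give $\mathcal{G}_{\Sigma}(\prod_i\eta_0(t_i))=\zeta_A(1)^{|\Sigma|}$, and one checks that with this value the elements $b_{(k_i)}$ fail to be annihilated once $|\Sigma|>1$. Extracting the correct, weight-collapsing value from the genuine (non-multiplicative) shape of $\mathcal{F}_{\Sigma}=\mathcal{E}_{\Sigma}$ composed with evaluation is where the real work lies; a secondary but essential technical input is the precise triangular vanishing of the specialized trivial values used for independence, which is a self-contained computation with Carlitz--Thakur power sums.
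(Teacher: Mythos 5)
Your proposal fails at the step you yourself identify as its keystone: the claimed identity $\mathcal{G}_{\Sigma}\bigl(\prod_{i\in\Sigma}\eta_0(t_i)\bigr)=\zeta_A(1)$ is false whenever $|\Sigma|>1$, and it is precisely the value you dismiss as the ``naive multiplicative guess'' that is correct. Indeed, from Theorem \ref{lambda} and the fact (noted just before (\ref{Gsig})) that $\text{ev}$ sends each depth-one polylogarithm to $\zeta_A(1)$, one obtains the closed formula (\ref{Gsig}): $\mathcal{G}_{\Sigma}(f)=\sum_{\underline{j}}\zeta_A(1)^{\sum_{r\in\Sigma}q^{j_r}}f(\theta^{q^{\underline{j}}})/D_{\underline{j}}$. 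Combining this with Lemma \ref{lem: eta} (namely $\eta_{\underline{k}}(\theta^{q^{\underline{j}}})=1$ if $\underline{j}=\underline{k}$ and $0$ otherwise) gives $\mathcal{G}_{\Sigma}(\eta_{\underline{k}})=\zeta_A(1)^{\sum_i q^{k_i}}/D_{\underline{k}}$ for \emph{every} $\underline{k}$, including $\underline{k}=\underline{0}$, so $\mathcal{G}_{\Sigma}(\eta_{\underline{0}})=\zeta_A(1)^{|\Sigma|}$: on these generators the map does factor multiplicatively across the variables. Your value $\zeta_A(1)$ would also contradict Theorem \ref{thm: main B}, which asserts that $\im(\mathcal{G}_{\Sigma})$ is the ideal generated by $\zeta_A(1)^{|\Sigma|}$. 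Note moreover that you never actually derive the claimed ``collapse''; you defer it as ``where the real work lies'', so even on its own terms the proposal is missing its central step --- and no derivation can exist, since the claim is false.

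The root of the error is a misparsing of the exponent in the statement: it is meant as $\sum_{i\in\Sigma}(q^{k_i}-1)$, not $\bigl(\sum_{i\in\Sigma}q^{k_i}\bigr)-1$; the two readings agree exactly when $|\Sigma|=1$, which is the only case where your proof goes through. This is unambiguous from the paper's own proof: Theorem \ref{countex} defines $\Phi\bigl(\prod_{i\in\Sigma}T_i^{q^{j_i}-1}\bigr)=-\zeta_A(1)^{\sum_{i\in\Sigma}(q^{j_i}-1)}\eta_{\underline{0}}+D_{\underline{j}}\eta_{\underline{j}}$, and Theorem \ref{thm: main A} is read off from $\Phi$. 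With that exponent and the correct evaluations, kernel membership is immediate: $-\zeta_A(1)^{\sum_i(q^{k_i}-1)}\cdot\zeta_A(1)^{|\Sigma|}+D_{\underline{k}}\cdot\zeta_A(1)^{\sum_i q^{k_i}}/D_{\underline{k}}=0$. Had you used the correct identities, the rest of your plan would essentially reproduce the paper's argument: your spanning step (expand a kernel element in the generators $\eta_{\underline{k}}$, invert the units $D_{k_i}$, and use that $\mathcal{Z}_{\emptyset}(K)$ is a domain) and your independence step (specialize at $t_i=\theta^{q^{n_i}}$; note Lemma \ref{lem: eta} gives the full delta property, not merely the triangular vanishing you invoke) are exactly the content of the injectivity of $\Phi$ and of $\im(\Phi)=\ker(\mathcal{G}_{\Sigma})$ in Theorem \ref{countex}. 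As written, however, for $|\Sigma|>1$ the elements you exhibit do not lie in $\ker(\mathcal{G}_{\Sigma})$ at all, so the proof fails.
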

To prove Theorem \ref{thm: main A}, we use the structure of $\mathcal{Z}_{\emptyset}(K)$-module $\mathcal{Z}^{\text{triv}}_{\Sigma}(K)$ (see \cite[Theorem 6.10]{GP21}) to interpolate elements of $\ker(\mathcal{G}_{\Sigma})$ at points $(\theta^{q^{k_i}})_{i \in \Sigma}$
% , using the elements $\prod_{i \in \Sigma}\eta_{k_i}(t_i)$ 
with $k_i \in \mathbf{N}$ for all $i \in \Sigma$. The construction of the map $\mathcal{E}_{\Sigma}$ then allows us to establish the algebraic structure of $\ker(\mathcal{G}_{\Sigma})$. The linear independence of the basis follows from the observation that $\eta_k(q^i) = 1$, if $i = k$ and  $\eta_k(q^i) = 0$, otherwise.

As a consequence of Theorem \ref{thm: main A}, we obtain an answer to Conjecture \ref{conj: injectivity of G_Sigma}.
\begin{corollary}\label{ninjective}
When $\Sigma \ne \emptyset$, the map $\mathcal{G}_{\Sigma}$ is not injective.
\end{corollary}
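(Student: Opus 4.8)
The plan is to deduce the corollary directly from Theorem \ref{thm: main A}, which supplies an explicit $\mathcal{Z}_{\emptyset}(K)$-basis of $\ker(\mathcal{G}_{\Sigma})$. The only thing I need to extract is that this basis is non-empty when $\Sigma \neq \emptyset$. Indeed, since $\Sigma$ contains at least one index, the family of tuples $(k_i)_{i \in \Sigma} \in \mathbf{N}^{|\Sigma|}$ whose entries are not all zero is non-empty: for instance, fix a single $j \in \Sigma$, set $k_j = 1$, and put $k_i = 0$ for every $i \in \Sigma \setminus \{j\}$. By Theorem \ref{thm: main A}, the associated element
\[
    - \zeta_A(1)^{\sum_{i \in \Sigma} q^{k_i} - 1} \prod_{i \in \Sigma} \eta_0(t_i) + \prod_{i \in \Sigma} D_{k_i} \eta_{k_i}(t_i)
\]
is then a member of a $\mathcal{Z}_{\emptyset}(K)$-basis of $\ker(\mathcal{G}_{\Sigma})$.

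Next I would invoke the elementary fact that an element belonging to a basis of a free module over a non-zero ring is itself non-zero; here the coefficient ring $\mathcal{Z}_{\emptyset}(K)$ contains $K$ and is in particular non-zero. Hence the element displayed above is a non-zero element of $\ker(\mathcal{G}_{\Sigma})$, so $\ker(\mathcal{G}_{\Sigma}) \neq 0$ and $\mathcal{G}_{\Sigma}$ cannot be injective. This resolves Conjecture \ref{conj: injectivity of G_Sigma} in the negative whenever $\Sigma \neq \emptyset$.

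I do not expect a genuine obstacle at this stage, since the entire substance of the argument is concentrated in Theorem \ref{thm: main A}; the corollary is essentially a non-emptiness observation about the parametrizing index set. The single point that merits a line of verification is that the basis described in Theorem \ref{thm: main A} is non-empty \emph{precisely} when $\Sigma \neq \emptyset$ — when $\Sigma = \emptyset$ the only admissible tuple is the empty one, which is excluded by the requirement that the $k_i$ not all vanish, so the basis is empty, the kernel is trivial, and $\mathcal{G}_{\emptyset}$ is injective. This dichotomy confirms that the failure of injectivity is exactly the nontrivial-$\Sigma$ phenomenon asserted in the statement.
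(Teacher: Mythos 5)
Your proof is correct and follows essentially the same route as the paper: the paper deduces the corollary from Theorem \ref{countex} (equivalently, Theorem \ref{thm: main A}) together with the observation that the kernel's parametrizing set is non-empty, which is exactly your non-emptiness argument. Your extra remark that basis elements of a free module over the non-zero ring $\mathcal{Z}_{\emptyset}(K)$ are non-zero is a valid (and slightly more explicit) justification of the step the paper calls trivial.
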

On the other hand, by using Theorem \ref{thm: main A}, we can also compute the image of $\mathcal{G}_{\Sigma}$.
\begin{theorem}\label{thm: main B}Let $\Sigma$ be a finite subset of $\mathbf{N}^*$ such that $|\Sigma| < q$. Then $\im(\mathcal{G}_{\Sigma})$ is the ideal of $\mathcal{Z}_{\emptyset}(K)$ generated by $\zeta_A(1)^{|\Sigma|}$.
\end{theorem}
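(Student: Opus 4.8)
The plan is to deduce Theorem \ref{thm: main B} from the explicit kernel basis produced in Theorem \ref{thm: main A}, reducing the whole computation to the identification of a single value. First I would record that, by \cite[Theorem 6.10]{GP21}, the $\mathcal{Z}_{\emptyset}(K)$-module $\mathcal{Z}^{\text{triv}}_{\Sigma}(K)$ is generated by the products $\prod_{i \in \Sigma}\eta_{k_i}(t_i)$, so that $\mathcal{Z}_{\emptyset}(K)$-linearity of $\mathcal{G}_{\Sigma}$ gives
\begin{equation*}
\im(\mathcal{G}_{\Sigma}) = \sum_{(k_i)_{i \in \Sigma}} \mathcal{Z}_{\emptyset}(K)\cdot G_{(k_i)}, \qquad G_{(k_i)} := \mathcal{G}_{\Sigma}\Big(\prod_{i \in \Sigma}\eta_{k_i}(t_i)\Big).
\end{equation*}
Writing $G_0 := G_{(0,\dots,0)} = \mathcal{G}_{\Sigma}(\prod_{i \in \Sigma}\eta_0(t_i))$, my first goal is to show this sum collapses to the single principal submodule $\mathcal{Z}_{\emptyset}(K)\cdot G_0$.

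For this, I would apply $\mathcal{G}_{\Sigma}$ to each basis element of $\ker(\mathcal{G}_{\Sigma})$ supplied by Theorem \ref{thm: main A}. Since $\mathcal{G}_{\Sigma}$ is $\mathcal{Z}_{\emptyset}(K)$-linear and each $D_{k_i}$ lies in $A \subseteq K \subseteq \mathcal{Z}_{\emptyset}(K)$, the vanishing of $\mathcal{G}_{\Sigma}$ on the basis element attached to a tuple $(k_i)_{i\in\Sigma}$ that is not identically zero yields
\begin{equation*}
\Big(\prod_{i \in \Sigma} D_{k_i}\Big)\, G_{(k_i)} = \zeta_A(1)^{\sum_{i \in \Sigma}q^{k_i} - 1}\, G_0 .
\end{equation*}
Because $\prod_{i \in \Sigma} D_{k_i}$ is a nonzero element of $K$, hence invertible in $\mathcal{Z}_{\emptyset}(K)$, and because the exponent $\sum_{i \in \Sigma}q^{k_i}-1$ is a nonnegative integer, this exhibits every $G_{(k_i)}$ as a $\mathcal{Z}_{\emptyset}(K)$-multiple of $G_0$. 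Together with the all-zero tuple, which returns $G_0$ itself, this gives $\im(\mathcal{G}_{\Sigma}) = \mathcal{Z}_{\emptyset}(K)\cdot G_0$, so the image is the principal ideal generated by $G_0$.

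It then remains to identify the generator, that is, to prove $G_0 = \zeta_A(1)^{|\Sigma|}$. Here I would unwind the definition $\mathcal{G}_{\Sigma} = \mathrm{ev}\circ\mathcal{E}_{\Sigma}$ on the product $\prod_{i \in \Sigma}\eta_0(t_i)$. Each factor $\eta_0(t_i) = \zeta_A\binom{\{i\}}{1}$ is a weight-one Pellarin value whose image under $\mathcal{E}_{\Sigma}$ is the corresponding polylogarithm $\lambda_A\binom{\{i\}}{1} = \sum_{a \in A^+} X_i^{q^{\deg a}}/a$, and specializing $X_i = 1$ returns $\sum_{a \in A^+} 1/a = \zeta_A(1)$. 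Using the compatibility of $\mathcal{E}_{\Sigma}$ and of the evaluation map with the factorization of $\mathcal{Z}_{\Sigma}(K)$ over the disjoint singletons $\{i\}$, $i \in \Sigma$ (recalled in Subsection \ref{subsec: multiple polylogarithms}), the image of the product is the product of the images, so $G_0 = \prod_{i \in \Sigma}\zeta_A(1) = \zeta_A(1)^{|\Sigma|}$. Substituting this into the principal-ideal description finishes the proof.

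I expect the main obstacle to be this last step: controlling $\mathcal{E}_{\Sigma}$ on the \emph{product} $\prod_{i \in \Sigma}\eta_0(t_i)$ rather than on a single Pellarin value, since the product must first be re-expanded inside $\mathcal{Z}_{\Sigma}(K)$ before the map applies, and one must check that no lower-weight or off-diagonal contribution survives the specialization $X_i = 1$. By contrast, the reduction of the image to a principal ideal is a purely formal consequence of Theorem \ref{thm: main A}.
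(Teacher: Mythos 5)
Your reduction of $\im(\mathcal{G}_{\Sigma})$ to a principal ideal is correct and is precisely the paper's argument: Corollary \ref{generators} supplies the generators $\eta_{\underline{k}}$ of the $\mathcal{Z}_{\emptyset}(K)$-module $\mathcal{Z}^{\text{triv}}_{\Sigma}(K)$, and applying $\mathcal{G}_{\Sigma}$ to the kernel elements from Theorem \ref{thm: main A} gives $\bigl(\prod_{i\in\Sigma}D_{k_i}\bigr)G_{(k_i)}=\zeta_A(1)^{\sum_{i\in\Sigma}q^{k_i}-1}G_0$, so that every $G_{(k_i)}$ is a $\mathcal{Z}_{\emptyset}(K)$-multiple of $G_0:=\mathcal{G}_{\Sigma}(\eta_{\underline{0}})$ and $\im(\mathcal{G}_{\Sigma})=\mathcal{Z}_{\emptyset}(K)\cdot G_0$.

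The gap sits exactly where you predicted: the identification $G_0=\zeta_A(1)^{|\Sigma|}$. You appeal to a ``compatibility of $\mathcal{E}_{\Sigma}$ and of the evaluation map with the factorization of $\mathcal{Z}_{\Sigma}(K)$ over the disjoint singletons,'' citing Subsection \ref{subsec: multiple polylogarithms}; but no such multiplicativity statement is recalled there or proved anywhere in the paper. The maps $\mathcal{F}_{\Sigma}=\mathcal{E}_{\Sigma}$ are only morphisms of $\mathcal{Z}_{\emptyset}(K)$-modules, and, strictly speaking, $\mathcal{E}_{\Sigma}$ is not even defined on the individual factors $\eta_0(t_i)$, which live in $\mathcal{Z}_{\{i\}}(K)$ rather than in $\mathcal{Z}_{\Sigma}(K)$; so ``the image of the product is the product of the images'' is an assertion requiring proof, not a quotable fact. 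The repair is immediate and avoids factorization altogether --- it is what the paper does. Since $\text{ev}\bigl(\lambda_A\begin{pmatrix}\{j\}\\ 1\end{pmatrix}\bigr)=\zeta_A(1)$, Theorem \ref{lambda} yields the interpolation formula (\ref{Gsig}),
\begin{equation*}
\mathcal{G}_{\Sigma}(f)=\sum_{\underline{j}}\zeta_A(1)^{\sum_{r\in\Sigma}q^{j_r}}\frac{f(\theta^{q^{\underline{j}}})}{D_{\underline{j}}},
\end{equation*}
and Lemma \ref{lem: eta} gives $\eta_{\underline{0}}(\theta^{q^{\underline{j}}})=1$ if $\underline{j}=\underline{0}$ and $0$ otherwise, so the sum collapses to the single term $\zeta_A(1)^{|\Sigma|}/D_{\underline{0}}=\zeta_A(1)^{|\Sigma|}$. (The same collapse, applied to each factor via the explicit formula, is also how one would \emph{prove} the multiplicativity you wanted for these particular elements; but for the theorem only the single evaluation above is needed.) With that one-line substitution your argument becomes complete and coincides with the paper's proof.
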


Let us give an outline of the paper. In Section \ref{sec: pellarin's multiple zeta values},  we first review some basic concepts and introduce Pellarin's multiple zeta values. We then establish the sum-shuffle formula for the products of two zeta values. In Section \ref{sec: a formula for power sums}, we derive a formula for power sum that corrects the original formula proposed by Gezmis and Pellarin \cite[Formula (22)]{GP21}. We then verify and compare these two formulas and reprove \cite[Corollary 5.4]{GP21}. In Section \ref{sec: counterexamples}, we review the construction of the maps $\mathcal{F}_{\Sigma}$ and $\mathcal{E}_{\Sigma}$, as well as the concept of trivial multiple zeta values. Then we provide the proofs of Theorem \ref{thm: main A}, Corollary \ref{ninjective} and Theorem  \ref{thm: main B}.

%%%%%%%%%%%%%%%%%%%

\subsection*{Acknowledgments}
We are grateful to T. Ngo Dac for his encouragement and his suggestions throughout this work. We also want to thank F. Pellarin, who shared his insights and patiently answered all of our questions.

Two authors are supported by the Excellence Research Chair “L-functions in positive characteristic and applications” financed by the Normandy Region.

%%%%%%%%%%%%%%%%%%%%

\section{Pellarin's multiple zeta values} \label{sec: pellarin's multiple zeta values}

\subsection{Notations and conventions}
${}$
\par Let $p$ be a prime number and let $q$ be a power of $p$. In this work, we will use constantly the following notations:
\begin{align*}
    & \mathbf{N} = \text{set of natural numbers, i.e., non-negative integers.}\\
    & \mathbf{N}^* = \text{set of positive integers.} \\
    & \mathbf{Z} = \text{set of integers.} \\
    & \mathbf{F}_q = \text{finite field with $q$ elements.} \\
    & A = \text{polynomial ring $\mathbf{F}_q[\theta]$ in indeterminate $\theta$ over $\mathbf{F}_q$.} \\
    & A^+ = \text{set of monic polynomials in $A$.} \\
    & A^+(d) = \text{set of monic polynomials in $A$ whose degree is equal to $d$.} \\
    & A_{<}(d) = \text{set of polynomials in $A$ whose degree is strictly less than $d$.} \\
    & K = \text{fraction field $\mathbf{F}_q(\theta)$ of $A$.} \\
    & v_{\infty} = \text{valuation on } K \text{ given by } v_{\infty}(a/b) = \deg b - \deg a \text{ for } a/b \in K.\\
    & K_{\infty} = \text{completion $\mathbf{F}_q((1/\theta))$ of $K$ for $v_{\infty}$.} \\
    & \mathbf{C}_{\infty} = \text{completion of an algebraic closure of $K_{\infty}$.} \\
    & \ell_d = \prod^d_{i=1}(\theta - \theta^{q^i}) \text{ for } d \in \mathbf{N}. \\
    & D_d = \prod^{d-1}_{i=0}(\theta^{q^d} - \theta^{q^i}) \text{ for } d \in \mathbf{N}.\\
    &  b_d(t) = \prod_{i =0}^{d-1}(t-\theta^{q^i}) \text{ for } d \in \mathbf{N}.
\end{align*}
As a matter of convention, we agree that an empty product is equal to $1$ and an empty sum is equal to $0$.

%%%%%%%%%%%%%%%%%%%

\subsection{Finite weighted subsets} \label{subsec: finite weighted subsets}
${}$
 \par A countable set $\Sigma = \{\sigma_n\}_{n \in \mathbf{N}^*}$ of natural numbers is called a \textit{finite weighted subset} of $\mathbf{N}^*$ if $\sigma_n = 0$ for all but finitely many $n \in \mathbf{N}^*$. The \textit{support} $\text{Supp}(\Sigma)$ of $\Sigma$ is the set of natural numbers $n \in \mathbf{N}^*$ such that $\sigma_n \ne 0$. The \textit{cardinality} of $\Sigma$, denoted by $|\Sigma|$, is defined as the sum
 \begin{equation*}
        |\Sigma| = \sum_{n \in \mathbf{N}^*} \sigma_n.
    \end{equation*}
In particular, we write $\emptyset$ (by abuse of notation) for the finite weighted subset of $\mathbf{N}^*$ whose elements are all zeros.

\begin{remark}\label{rmk: finite subsets as fws}
One can regard a usual finite subset $\Sigma$ of $\mathbf{N}^*$ as a finite weighted subset $\Sigma = \{\sigma_n\}_{n \in \mathbf{N}^*}$ of $\mathbf{N}^*$ with $\sigma_n = 1$ if $n \in \Sigma$ and $\sigma_n = 0$ if $n \notin \Sigma$.
\end{remark}

Let $\Sigma = \{\sigma_n\}_{n \in \mathbf{N}^*}$ be a finite weighted subset of $\mathbf{N}^*$. A finite weighted subset $J = \{j_n\}_{n \in \mathbf{N}^*}$ of $\mathbf{N}^*$ is called a \textit{subset} of $\Sigma$ if $j_n \leq \sigma_n$ for all $n \in \mathbf{N}^*$. If this is the case, we write $J \subseteq \Sigma$. 

Let $\Sigma = \{\sigma_n\}_{n \in \mathbf{N}^*}$ and $\Gamma = \{\gamma_n\}_{n \in \mathbf{N}^*}$ be two finite weighted subsets of $\mathbf{N}^*$. We define the \textit{union} of $\Sigma$ and $\Gamma$, denoted by $\Sigma \sqcup \Gamma$, as the finite weighted subset of $\mathbf{N}^*$ given by 
\begin{equation*}
 \Sigma \sqcup \Gamma = \{\sigma_n + \gamma_n\}_{n \in \mathbf{N}^*}.
\end{equation*}

%%%%%%%%%%%%%%%

\subsection{Pellarin's multiple zeta values} \label{subsec: Pellarin's multiple zeta values}
${}$
 \par Let us first review the concept of \textit{Tate algebras}. We consider a finite number of indeterminates $t_1, \dotsc, t_s$ over $\mathbf{C}_{\infty}$. We endow on $\mathbf{C}_{\infty}[t_1, \dotsc, t_s]$ the \textit{Gauss valuation} $v_{\infty}$, defined for any polynomial $f = \sum a_{n_1, \dotsc, n_s}t_1^{n_1} \cdots t_s^{n_s} \in \mathbf{C}_{\infty}[t_1, \dotsc, t_s]$ by
    \begin{equation*}
        v_{\infty}(f) = \min \limits_{n_1, \dotsc, n_s} v_{\infty}(a_{n_1, \dotsc, n_s}).
    \end{equation*}
    Denote by $\mathbf{T}_s(\mathbf{C}_{\infty})$ the completion of $\mathbf{C}_{\infty}[t_1, \dotsc, t_s]$ with respect to the Gauss valuation. 
    Then $\mathbf{T}_s(\mathbf{C}_{\infty})$ is a $\mathbf{C}_{\infty}$-algebra, and is known as a  \textit{Tate algebra} over $\mathbf{C}_{\infty}$ in indeterminates $t_1, \dotsc, t_s$. Moreover, we can identify $\mathbf{T}_s(\mathbf{C}_{\infty})$ with the set of formal power series $f = \sum a_{n_1, \dotsc, n_s}t_1^{n_1} \cdots t_s^{n_s} \in \mathbf{C}_{\infty}[[t_1, \dotsc, t_s]]$ such that 
    \begin{equation*}
        \lim\limits_{n_1 + \cdots + n_s \rightarrow \infty} a_{n_1, \dotsc, n_s} = 0.
    \end{equation*}
    We refer the reader to \cite[Chapter 5]{BGR84} for further properties of Tate algebras.
 
 Let $\{t_n\}_{n \in \mathbf{N}^*}$ be a sequence of independent indeterminates. For any finite weighted subset $\Sigma = \{\sigma_n\}_{n \in \mathbf{N}^*}$ of $\mathbf{N}^*$, we let $\mathbf{T}_{\Sigma}(\mathbf{C}_{\infty})$ denote the \textit{Tate algebra}  over $\mathbf{C}_{\infty}$ in indeterminates $t_n$ with $n \in \text{Supp}(\Sigma)$.  We define the character $\chi_{_\Sigma}$ as follows: for each polynomial $a$ in $A$, we set
\begin{equation*}
    \chi_{_\Sigma} (a) = \prod \limits_{n \in \mathbf{N}^*} a(t_n)^{\sigma_n},
\end{equation*}
where $a(t_n)$ is the polynomial obtained by substituting $\theta$ with $t_n$ in $a$. When $\Sigma = \{i\}$ is a singleton set, we write simply $\chi_{t_i} (a)$ instead of $\chi_{_\Sigma} (a)$. 

Let $\Sigma = \{\sigma_n\}_{n \in \mathbf{N}^*}$ be a finite weighted subset of $\mathbf{N}^*$, and let $J = \{j_n\}_{n \in \mathbf{N}^*}$ be a subset of $\Sigma$. We define
\begin{align*}
    \binom{\Sigma}{J} = \prod \limits_{n \in \mathbf{N}^*}\binom{\sigma_n}{j_n}.
\end{align*}
The following proposition gives some key properties of the character $\chi_{_\Sigma}$, which are used frequently later.
\begin{proposition} \label{prop: character}
    We have the following properties:
\begin{enumerate}[label=\textnormal{(\roman*)}, align= left]
\item $\chi_{_\Sigma}(a+b) = \sum \limits_{I \sqcup J = \Sigma} \binom{\Sigma}{J}\chi_{_I}(a) \chi_{_J}(b)$.
\item  $\chi_{_\Sigma}(ab) = \chi_{_\Sigma}(a)\chi_{_\Sigma}(b)$.
\item  For $\alpha \in \mathbf{F}_q$, $\chi_{_\Sigma}(\alpha a) = \alpha^{|\Sigma|}\chi_{_\Sigma}(a)$.
\item  $\chi_{_\Sigma}(a)\chi_{_\Gamma}(a) = \chi_{_{\Sigma \sqcup \Gamma}}(a)$.
\end{enumerate}
\end{proposition}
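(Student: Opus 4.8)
The plan is to read off all four identities directly from the definition $\chi_{_\Sigma}(a) = \prod_{n \in \mathbf{N}^*} a(t_n)^{\sigma_n}$, exploiting the single structural fact that for each fixed $n$ the evaluation $a \mapsto a(t_n)$ (substituting $\theta$ by $t_n$) is a ring homomorphism out of $A$. Because $\Sigma$ is a finite weighted subset, $\sigma_n = 0$ for all but finitely many $n$, so every product below is genuinely finite and no convergence question arises.

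Properties (ii), (iii) and (iv) I expect to be immediate. For (ii) I would use $(ab)(t_n) = a(t_n)b(t_n)$, raise to the power $\sigma_n$, and factor the product over $n$ into two products. For (iii), since $\alpha \in \mathbf{F}_q$ gives $(\alpha a)(t_n) = \alpha\, a(t_n)$, each factor contributes an extra $\alpha^{\sigma_n}$, and $\prod_n \alpha^{\sigma_n} = \alpha^{\sum_n \sigma_n} = \alpha^{|\Sigma|}$ by the definition of $|\Sigma|$. For (iv) I would merge the two products into $\prod_n a(t_n)^{\sigma_n + \gamma_n}$, which is precisely $\chi_{_{\Sigma \sqcup \Gamma}}(a)$ by the definition $\Sigma \sqcup \Gamma = \{\sigma_n + \gamma_n\}_{n}$.

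The real content is (i). I would apply the binomial theorem to each factor separately,
\[
    (a(t_n) + b(t_n))^{\sigma_n} = \sum_{j_n = 0}^{\sigma_n} \binom{\sigma_n}{j_n} a(t_n)^{\sigma_n - j_n} b(t_n)^{j_n},
\]
so that $\chi_{_\Sigma}(a+b) = \prod_n (a(t_n)+b(t_n))^{\sigma_n}$ becomes a finite product of finite sums. Expanding by selecting one summand (an index $j_n$ with $0 \le j_n \le \sigma_n$) from each factor, and recording these choices as a weighted subset $J = \{j_n\}_n$ with $J \subseteq \Sigma$, turns the product into a single sum over all such $J$. In each term the binomial coefficients multiply to $\prod_n \binom{\sigma_n}{j_n} = \binom{\Sigma}{J}$, the $b$-part assembles to $\prod_n b(t_n)^{j_n} = \chi_{_J}(b)$, and the $a$-part assembles to $\prod_n a(t_n)^{\sigma_n - j_n} = \chi_{_I}(a)$ for the weighted subset $I = \{\sigma_n - j_n\}_n$.

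The only point demanding care — and the sole genuine obstacle — is the combinatorial bookkeeping: I must verify that summing over $J \subseteq \Sigma$ is the same as summing over decompositions $I \sqcup J = \Sigma$. This holds because $I$ is uniquely recovered from $J$ through $i_n = \sigma_n - j_n$, and conversely any decomposition $I \sqcup J = \Sigma$ forces $J \subseteq \Sigma$, so the two index sets are in bijection. With this identification the expansion reads exactly $\sum_{I \sqcup J = \Sigma} \binom{\Sigma}{J} \chi_{_I}(a) \chi_{_J}(b)$, which is (i).
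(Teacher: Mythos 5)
Your proof is correct and takes essentially the same approach as the paper: properties (ii)--(iv) are read off directly from the definition, and (i) is obtained by applying the binomial theorem to each factor $(a(t_n)+b(t_n))^{\sigma_n}$ and expanding the resulting product of sums into a single sum over decompositions $I \sqcup J = \Sigma$. The only difference is cosmetic: you make explicit the bijection between weighted subsets $J \subseteq \Sigma$ and decompositions $I \sqcup J = \Sigma$, which the paper's proof leaves implicit in its final equality.
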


\begin{proof}
    The properties (ii), (iii), (iv) are trivial. To prove (i), we assume that $\Sigma = \{\sigma_n\}_{n \in \mathbf{N}^*}$ is a finite weighted subset $\Sigma$ of $\mathbf{N}^*$. Then we have
    \begin{align*}
        \chi_{_\Sigma}(a+b) = \prod \limits_{n \in \mathbf{N}^*} (a(t_n) + b(t_n))^{\sigma_n} =& \prod\limits_{n \in \mathbf{N}^*} \sum\limits_{i_n + j_n = \sigma_n}  \binom{\sigma_n}{j_n}a(t_n)^{i_n}b(t_n)^{j_n} \\
        =& \sum \limits_{I \sqcup J = \Sigma} \binom{\Sigma}{J}\chi_{_I}(a) \chi_{_J}(b).
    \end{align*}
\end{proof}

% Let $\Sigma$ be a finite weighted subset of $\mathbf{N}^*$, and let $s$ be a positive integer. For $d \in \mathbf{N}$, we define
% \begin{align*}
%     S_d\begin{pmatrix}
% \Sigma\\
% s
% \end{pmatrix}  = \sum\limits_{a \in A^+(d)}  \dfrac{\chi_{_\Sigma}(a)}{a^s}.
% \end{align*}
Let $\Sigma_1, \dotsc , \Sigma_r$ be finite weighted subsets of $\mathbf{N}^*$, and let $s_1, \dotsc, s_r$ be integers. An array $\mathtt{A} =  \begin{pmatrix}
\Sigma_1 & \dotsb & \Sigma_r\\
s_1 & \dotsb & s_r
\end{pmatrix}$ is said to be \textit{admissible} if $s_1, \dotsc, s_r$ are positive integers. For such an admissible array, we call $\mathtt{A}$ a \textit{composition} of $\begin{pmatrix}
\Sigma\\
w
\end{pmatrix}$ if $\Sigma_1 \sqcup \cdots \sqcup \Sigma_r = \Sigma$ and $s_1 + \cdots + s_r = w$. Let $\mathtt{A} =  \begin{pmatrix}
\Sigma_1 & \dotsb & \Sigma_r\\
s_1 & \dotsb & s_r
\end{pmatrix}$ be an admissible array which is a composition of $\begin{pmatrix}
\Sigma\\
w
\end{pmatrix}$. The \textit{Pellarin's multiple zeta value} (Pellarin' MZV for short) associated with $\mathtt{A}$ is defined as
\begin{align*}
 \zeta_A(\mathtt{A}) =  \sum \limits_{\substack{a_i \in A^+\\  \deg a_1> \dotsb > \deg a_r \geq 0}}  \dfrac{\chi_{_{\Sigma_1}}(a_1) \cdots \chi_{_{\Sigma_r}}(a_r)}{a_1^{s_1} \cdots a_r^{s_r}} \in \mathbf{T}_{\Sigma}(\mathbf{C}_{\infty}).
%  \sum\limits_{d_1 > \cdots > d_r \geq 0} S_{d_1}\begin{pmatrix}
% \Sigma_1\\
% s_1
% \end{pmatrix} \cdots S_{d_r}\begin{pmatrix}
% \Sigma_r\\
% s_r
% \end{pmatrix} =
\end{align*}
We call $\Sigma$ the \textit{type}, $w$ the \textit{weight} and $r$ the \textit{depth} of $\mathtt{A}$ (resp. $\zeta_A(\mathtt{A})$).  It was shown by Pellarin in \cite[Proposition 4]{Pel16} that every Pellarin's MZV can be considered as an entire function $\mathbf{C}_{\infty}^{|\text{Supp}(\Sigma)|} \rightarrow \mathbf{C}_{\infty}$. For $d \in \mathbf{N}$, the \textit{power sum} associated with $\mathtt{A}$ is defined as
\begin{align*}
    S_d(\mathtt{A}) =  \sum \limits_{\substack{a_i \in A^+\\  d = \deg a_1> \dotsb > \deg a_r \geq 0}}  \dfrac{\chi_{_{\Sigma_1}}(a_1) \cdots \chi_{_{\Sigma_r}}(a_r)}{a_1^{s_1} \cdots a_r^{s_r}}.
%  = \sum\limits_{d = d_1 > \cdots > d_r \geq 0} S_{d_1}\begin{pmatrix}
% \Sigma_1\\
% s_1
% \end{pmatrix} \cdots S_{d_r}\begin{pmatrix}
% \Sigma_r\\
% s_r
% \end{pmatrix}.
\end{align*}
Moreover, for $d \in \mathbf{N}$, we define 
\begin{align*}
    S_{<d}(\mathtt{A}) = \sum \limits_{\substack{a_i \in A^+\\  d > \deg a_1> \dotsb > \deg a_r \geq 0}}  \dfrac{\chi_{_{\Sigma_1}}(a_1) \cdots \chi_{_{\Sigma_r}}(a_r)}{a_1^{s_1} \cdots a_r^{s_r}}.
%     = \sum\limits_{d > d_1 > \cdots > d_r \geq 0} S_{d_1}\begin{pmatrix}
% \Sigma_1\\
% s_1
% \end{pmatrix} \cdots S_{d_r}\begin{pmatrix}
% \Sigma_r\\
% s_r
% \end{pmatrix}.
\end{align*}
Thus we have
\begin{align*}
S_{<d}(\mathtt{A}) &= \sum\limits_{k = 0}^{d-1} S_k(\mathtt{A}) \quad \text{and} \quad 
    \zeta_A(\mathtt{A}) = \sum\limits_{d = 0}^{\infty} S_d(\mathtt{A}).
\end{align*}
One verifies easily that 
\begin{align}
S_d \begin{pmatrix}
 \emptyset& \dotsb & \emptyset \\
s_1 & \dotsb & s_r \end{pmatrix}  &= S_{d}(s_1, \dotsc, s_r),\label{eq: power sum formula 1}\\
S_{d} \begin{pmatrix}
\Sigma_1 & \dotsb & \Sigma_r\\
s_1 & \dotsb & s_r
\end{pmatrix}  &= S_{d} \begin{pmatrix}
\Sigma_1\\
s_1 
\end{pmatrix}S_{<d} \begin{pmatrix}
\Sigma_2 & \dotsb & \Sigma_r\\
s_2 & \dotsb & s_r
\end{pmatrix} \label{eq: power sum formula 3}.
\end{align}
  
%%%%%%%%%%%%

\subsection{A sum-shuffle formula for power sums of depth $1$} 
${}$
 \par In this subsection, we establish an explicit sum-shuffle formula for the products of two zeta values (see Theorem \ref{thm: sum-shuffle formula}). We refer the reader to Pellarin's paper \cite{Pel17} for another approach to this formula.
 
 Let $a, b$ be two integers with $b \geq 0$. We recall the \textit{binomial number}
\begin{align*}
\binom{a}{b} = \dfrac{a(a-1) \cdots (a-b+1)}{b!}.
\end{align*}
It should be remarked that $\binom{a}{0} = 1$ and $\binom{a}{b} = 0$ when $b > a \geq 0$.

\begin{lemma} \label{lem: basic indentity}
Let $s,t$ be two positive integers. The following equality of rational function holds:
\begin{equation*}
    \frac{1}{X^s Y^t}=\sum\limits_{i+j=s+t} \left[\frac{{j-1 \choose t-1}}{X^i(X+Y)^j}+\frac{{j-1 \choose s-1}}{ Y^i(X+Y)^j}\right],
\end{equation*}
where the indices $i,j$ run through all positive integers.
\end{lemma}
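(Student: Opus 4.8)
The plan is to prove this as an identity of rational functions over a field of characteristic zero, from which the characteristic-$p$ case follows for free: after clearing denominators both sides become polynomials in $\mathbf{Z}[X,Y]$ (the binomial numbers $\binom{j-1}{t-1}$ and $\binom{j-1}{s-1}$ are integers), so a universal identity over $\mathbf{Q}$ reduces modulo $p$. Working over $\mathbf{Q}$, I would set $Z = X+Y$ and substitute $Y = Z - X$, viewing $1/(X^s(Z-X)^t)$ as a proper rational function of $X$ with coefficients in $\mathbf{Q}(Z)$. Its only poles are $X = 0$ of order $s$ and $X = Z$ of order $t$, and because the numerator has degree $0$ while the denominator has degree $s+t \ge 2$, there is no polynomial part. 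Hence, by uniqueness of the partial fraction decomposition, the function equals the sum of its two principal parts, and the whole problem reduces to computing these.

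To read off the principal part at $X = 0$, I would Taylor expand
\[
(Z-X)^{-t} = Z^{-t}\sum_{k \ge 0}\binom{t+k-1}{k}\left(\frac{X}{Z}\right)^k,
\]
so that the coefficient of $1/X^i$ in $1/(X^s(Z-X)^t)$ is the coefficient of $X^{s-i}$ in this series, namely $\binom{s+t-i-1}{s-i}Z^{-(s+t-i)}$. Re-indexing by $j = s+t-i$ turns this term into $\binom{j-1}{t-1}/(X^i Z^j)$, using the symmetry $\binom{j-1}{s-i} = \binom{j-1}{t-1}$, which holds because $(j-1)-(s-i) = t-1$. This reproduces exactly the $X$-terms of the claimed formula for $1 \le i \le s$. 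By the evident symmetry of the statement under the swap $(X,s) \leftrightarrow (Y,t)$, the principal part at $X = Z$ (that is, at $Y = 0$) contributes the corresponding $Y$-terms $\binom{j-1}{s-1}/(Y^i Z^j)$.

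Finally I would reconcile the index ranges: the partial fraction decomposition produces $i$ ranging over $1 \le i \le s$ in the $X$-sum and $1 \le i \le t$ in the $Y$-sum, whereas the lemma sums over all positive $i,j$ with $i+j = s+t$. The two agree because the extra terms vanish: when $i > s$ one has $j < t$, so $\binom{j-1}{t-1} = 0$, and symmetrically when $i > t$ one has $\binom{j-1}{s-1} = 0$. I expect the only real work to be this combinatorial bookkeeping---matching the Taylor coefficient $\binom{s+t-i-1}{s-i}$ with the stated $\binom{j-1}{t-1}$ and checking the vanishing of the out-of-range terms---rather than anything conceptually delicate. An alternative, entirely characteristic-free route proceeds by induction on $s+t$, building on the base case $s = t = 1$, where $\frac{1}{XY} = \frac{1}{X(X+Y)} + \frac{1}{Y(X+Y)}$, should one prefer to avoid the reduction to characteristic zero.
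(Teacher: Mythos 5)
Your proof is correct, but it cannot be compared with the paper's argument for the simple reason that the paper has none: its entire proof is the citation ``See \cite[Lemma 1.49]{BGF}.'' So what you have produced is a self-contained substitute for an outsourced step, and it holds up. The three points the argument rests on all check out: (1) the reduction to characteristic zero is legitimate, since after multiplying through by $X^{s+t-1}Y^{s+t-1}(X+Y)^{s+t-1}$ both sides lie in $\mathbf{Z}[X,Y]$, so the identity proved over $\mathbf{Q}$ holds in $\mathbf{Z}[X,Y]$ and reduces modulo $p$ --- which is the case the paper actually needs, as the lemma is specialized at $X=a$, $Y=-b$ with $a,b\in A$ in the proof of Lemma \ref{lem: sum-shuffle formula}; (2) the principal-part computation is right: the coefficient of $X^{-i}$ is $\binom{s+t-i-1}{s-i}Z^{-(s+t-i)}$, which after setting $j=s+t-i$ equals $\binom{j-1}{t-1}Z^{-j}$ by the symmetry $\binom{j-1}{s-i}=\binom{j-1}{(j-1)-(s-i)}$, and the invariance of the claimed identity under $(X,s)\leftrightarrow(Y,t)$ does dispose of the pole at $X=Z$; (3) the enlargement of the index range to all positive $i,j$ with $i+j=s+t$ costs nothing precisely because of the convention, recorded in the paper just before the lemma, that $\binom{a}{b}=0$ when $b>a\ge 0$. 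What your route buys is a proof that the reader can verify in place; its only blemish is the detour through characteristic zero, which the inductive alternative you mention at the end (base case $\frac{1}{XY}=\frac{1}{X(X+Y)}+\frac{1}{Y(X+Y)}$, induction on $s+t$ via Pascal's rule) avoids entirely, and that inductive argument is the one typically given in the literature for this identity.
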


\begin{proof}
See \cite[Lemma 1.49]{BGF}.
\end{proof}

Let $\Sigma$ be a finite weighted subset of $\mathbf{N}^*$, and let $J$ be a subset of $\Sigma$.  Let $j,s$ be two positive integers. For the convenience of writing, we define
\begin{equation*}
    \delta^{\Sigma,j}_{J,s} = (-1)^{|J|+s} \binom{\Sigma}{J}{j-1 \choose s-1}.
\end{equation*}
\begin{lemma} \label{lem: sum-shuffle formula}
Let $\Sigma,\Gamma$ be two finite weighted subsets of $\mathbf{N}^*$, and let $s,t$ be two positive integers. For any $a,b \in A$ such that $a \ne b$, we have
\begin{align*}
    \frac{\chi_{_\Sigma}(a)\chi_{_\Gamma}(b)}{a^s b^t} = \sum \limits_{\substack{J \subseteq \Gamma ; I \sqcup J = \Sigma \sqcup \Gamma \\
i + j = s + t}} \frac{\delta^{\Gamma,j}_{J,t}\chi_{_I}(a)\chi_{_J}(a-b)}{a^i(a-b)^j} + \sum \limits_{\substack{J \subseteq \Sigma ; I \sqcup J = \Sigma \sqcup \Gamma \\
i + j = s + t}} \frac{\delta^{\Sigma,j}_{J,s}\chi_{_I}(b)\chi_{_J}(b-a)}{b^i(b-a)^j},
\end{align*}
where the indices $i,j$ run through all positive integers.
\end{lemma}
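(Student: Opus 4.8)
The plan is to reduce everything to the scalar identity of Lemma~\ref{lem: basic indentity} together with the combinatorial properties of $\chi_{_\Sigma}$ recorded in Proposition~\ref{prop: character}. First I would split the denominator. Applying Lemma~\ref{lem: basic indentity} with the substitution $X = a$ and $Y = -b$ (so that $X + Y = a - b$), and then rewriting $(a-b)^j = (-1)^j (b-a)^j$ in the second group of terms, one obtains, after tracking the powers of $-1$ and using $i + j = s + t$, a decomposition of the shape
\begin{equation*}
\frac{1}{a^s b^t} = (-1)^t \sum_{i+j=s+t} \frac{\binom{j-1}{t-1}}{a^i (a-b)^j} + (-1)^s \sum_{i+j=s+t} \frac{\binom{j-1}{s-1}}{b^i (b-a)^j}.
\end{equation*}
The first group now carries the denominators $a^i(a-b)^j$ and the second the denominators $b^i(b-a)^j$, matching the two sums in the statement.

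Next I would rewrite the numerator $\chi_{_\Sigma}(a)\chi_{_\Gamma}(b)$ in two different ways so that its arguments align with each group of denominators. Writing $b = a - (a-b)$ and expanding $\chi_{_\Gamma}(b)$ by Proposition~\ref{prop: character}(i), then pulling out the sign with part~(iii) and absorbing $\chi_{_\Sigma}(a)$ into the first factor with part~(iv), I expect to get
\begin{equation*}
\chi_{_\Sigma}(a)\chi_{_\Gamma}(b) = \sum_{\substack{J \subseteq \Gamma \\ I \sqcup J = \Sigma \sqcup \Gamma}} (-1)^{|J|} \binom{\Gamma}{J} \chi_{_I}(a)\chi_{_J}(a-b),
\end{equation*}
where the substitution $I = \Sigma \sqcup I'$, $J = J'$ turns the sum over $I' \sqcup J' = \Gamma$ into a sum over the stated index set (the condition $J \subseteq \Gamma$ together with $I \sqcup J = \Sigma \sqcup \Gamma$ automatically forces $I \supseteq \Sigma$, which makes the reindexing a bijection). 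Symmetrically, writing $a = b - (b-a)$ yields the companion expansion with $\chi_{_I}(b)\chi_{_J}(b-a)$ and $J \subseteq \Sigma$.

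Finally I would combine the pieces: multiply the first group of the denominator decomposition by the first numerator expansion and the second group by the second expansion. In each term the accumulated sign and binomial factors should collapse precisely into $\delta^{\Gamma,j}_{J,t} = (-1)^{|J|+t}\binom{\Gamma}{J}\binom{j-1}{t-1}$ and $\delta^{\Sigma,j}_{J,s} = (-1)^{|J|+s}\binom{\Sigma}{J}\binom{j-1}{s-1}$ respectively, giving exactly the two sums in the claimed formula.

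The computation is essentially mechanical, so the main obstacle is bookkeeping rather than ideas: one must keep the powers of $-1$ coming from the substitution $Y=-b$, from the conversion $(a-b)^j \mapsto (b-a)^j$, and from Proposition~\ref{prop: character}(iii) all consistent, and verify that the reindexing of the weighted-subset unions is genuinely a bijection onto $\{(I,J) : J \subseteq \Gamma,\ I \sqcup J = \Sigma \sqcup \Gamma\}$ (and its analogue with $\Sigma$). I would sanity-check the sign accounting by specializing to $\Sigma = \Gamma = \emptyset$, where the identity must reduce verbatim to Lemma~\ref{lem: basic indentity}.
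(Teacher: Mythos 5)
Your proposal is correct and follows essentially the same route as the paper: apply Lemma \ref{lem: basic indentity} with $X=a$, $Y=-b$, expand the characters via Proposition \ref{prop: character}(i), absorb factors with (iv), track signs with (iii), and reindex the pairs $(I,J)$ so that $J\subseteq\Gamma$ (resp.\ $J\subseteq\Sigma$) with $I\sqcup J=\Sigma\sqcup\Gamma$. The only difference is organizational — you expand the numerator and denominator separately before multiplying, while the paper carries everything along in one chain of equalities — and your sign bookkeeping and the resulting $\delta^{\Gamma,j}_{J,t}$, $\delta^{\Sigma,j}_{J,s}$ factors agree with the paper's.
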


\begin{proof}
Replacing $X = a$ and $Y = - b$, it follows from Lemma \ref{lem: basic indentity} that 
\begin{equation*} 
\frac{1}{a^s b^t}=\sum_{i+j= s + t} \left[\frac{(-1)^{t} {j-1 \choose t-1}}{a^i(a-b)^j }+\frac{(-1)^{s} {j-1 \choose s-1}}{b^i(b-a)^j }\right].
\end{equation*}
Thus it follows from Proposition \ref{prop: character} that
\begin{align*}
     &\frac{\chi_{_\Sigma}(a)\chi_{_\Gamma}(b)}{a^s b^t}\\
     =&\sum \limits_{i+j= s + t} \Bigg(\frac{(-1)^{t} {j-1 \choose t-1} \chi_{_\Sigma}(a)\chi_{_\Gamma}(b)}{a^i(a-b)^j}
        + 
        \frac{(-1)^{s} {j-1 \choose s-1} \chi_{_\Sigma}(a)\chi_{_\Gamma}(b)}{b^i(b-a)^j}\Bigg)\\
      =&\sum \limits_{i+j= s + t} \Bigg(\frac{(-1)^{t} {j-1 \choose t-1} \chi_{_\Sigma}(a)\chi_{_\Gamma}(a + b - a)}{a^i(a-b)^j}
        + 
        \frac{(-1)^{s} {j-1 \choose s-1} \chi_{_\Gamma}(b)\chi_{_\Sigma}(b + a - b)}{b^i(b-a)^j}\Bigg)\\
        =&\sum \limits_{i+j= s + t} \Bigg(\frac{(-1)^{t} {j-1 \choose t-1} \chi_{_\Sigma}(a)\sum \limits_{I \sqcup J = \Gamma}\binom{\Gamma}{J}\chi_{_I}(a)\chi_{_J}(b - a)}{a^i(a-b)^j}
        + 
        \frac{(-1)^{s} {j-1 \choose s-1} \chi_{_\Gamma}(b)\sum \limits_{I \sqcup J = \Sigma}\binom{\Sigma}{J}\chi_{_I}(b)\chi_{_J}(a-b)}{b^i(b-a)^j}\Bigg)\\
        =&\sum \limits_{\substack{I \sqcup J = \Gamma\\ i+j= s + t}} \frac{(-1)^{t} \binom{\Gamma}{J}{j-1 \choose t-1} \chi_{_{\Sigma \sqcup I}}(a)\chi_{_J}(b - a)}{a^i(a-b)^j}
        + 
        \sum \limits_{\substack{I \sqcup J = \Sigma\\ i+j= s + t}}\frac{(-1)^{s}\binom{\Sigma}{J} {j-1 \choose s-1} \chi_{_{\Gamma \sqcup I}}(b)\chi_{_J}(a-b)}{b^i(b-a)^j}\\
        =&\sum \limits_{\substack{I \sqcup J = \Gamma\\ i+j= s + t}} \frac{(-1)^{|J| + t} \binom{\Gamma}{J}{j-1 \choose t-1} \chi_{_{\Sigma \sqcup I}}(a)\chi_{_J}(a-b)}{a^i(a-b)^j}
        + 
        \sum \limits_{\substack{I \sqcup J = \Sigma\\ i+j= s + t}}\frac{(-1)^{|J| + s}\binom{\Sigma}{J} {j-1 \choose s-1} \chi_{_{\Gamma \sqcup I}}(b)\chi_{_J}(b-a)}{b^i(b-a)^j}\\
        =& \sum \limits_{\substack{J \subseteq \Gamma ; I \sqcup J = \Sigma \sqcup \Gamma \\
i + j = s + t}} \frac{\delta^{\Gamma,j}_{J,t}\chi_{_I}(a)\chi_{_J}(a-b)}{a^i(a-b)^j} + \sum \limits_{\substack{J \subseteq \Sigma ; I \sqcup J = \Sigma \sqcup \Gamma \\
i + j = s + t}} \frac{\delta^{\Sigma,j}_{J,s}\chi_{_I}(b)\chi_{_J}(b-a)}{b^i(b-a)^j}.
\end{align*}
This proves the lemma.
\end{proof}

Let $\Sigma$ be a finite weighted subset of $\mathbf{N}^*$, and let $J$ be a subset of $\Sigma$.  Let $j,s$ be two positive integers. We define
\begin{align*}
\Delta^{\Sigma,j}_{J,s}=\begin{cases} (-1)^{|J| + s -1}\binom{\Sigma}{J}\binom{j - 1}{s - 1}, & \quad \text{if } |J| \equiv j \ (\text{mod } q - 1), \\
0, & \quad \text{otherwise}.
\end{cases}
\end{align*}

In the following result, we adapt some techniques from Chen's proof regarding the sum-shuffle formula for the product of Carlitz zeta values (see \cite[Theorem 3.1]{Che15}).

\begin{proposition} \label{prop: sum-shuffle formula}
Let $\Sigma,\Gamma$ be two finite weighted subsets of $\mathbf{N}^*$, and let $s,t$ be two positive integers. For all $d \in \mathbf{N}$, we have  
\begin{align*}
    S_d & \begin{pmatrix}
 \Sigma \\
s  \end{pmatrix}  S_d \begin{pmatrix}
 \Gamma \\
t  \end{pmatrix} = S_d \begin{pmatrix}
 \Sigma \sqcup \Gamma \\
s + t  \end{pmatrix} + \sum \limits_{\substack{J \subseteq \Gamma ; I \sqcup J = \Sigma \sqcup \Gamma \\
i + j = s + t}} \Delta^{\Gamma,j}_{J,t} S_d \begin{pmatrix}
 I  &   J\\
i & j  \end{pmatrix} + \sum \limits_{\substack{J \subseteq \Sigma ; I \sqcup J = \Sigma \sqcup \Gamma\\
i + j = s + t}} \Delta^{\Sigma,j}_{J,s} S_d \begin{pmatrix}
 I  &   J\\
i & j  \end{pmatrix},
\end{align*}
where the indices $i,j$ run through all positive integers.
\end{proposition}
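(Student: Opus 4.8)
The plan is to expand the product straight from the definitions and split the resulting double sum according to whether the two summation variables coincide. Since
\[
S_d\begin{pmatrix}\Sigma\\s\end{pmatrix}S_d\begin{pmatrix}\Gamma\\t\end{pmatrix} = \sum_{a,b \in A^+(d)}\frac{\chi_{_\Sigma}(a)\chi_{_\Gamma}(b)}{a^s b^t},
\]
I would first isolate the diagonal contribution $a=b$. By Proposition \ref{prop: character}(iv) this equals $\sum_{a \in A^+(d)}\chi_{_{\Sigma \sqcup \Gamma}}(a)/a^{s+t} = S_d\begin{pmatrix}\Sigma\sqcup\Gamma\\s+t\end{pmatrix}$, producing the first term on the right-hand side. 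It then remains to treat the off-diagonal part $\sum_{a\ne b}$.

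To the off-diagonal terms I would apply Lemma \ref{lem: sum-shuffle formula} (legitimate since $a\ne b$), which rewrites each summand as a sum of two families involving $\delta^{\Gamma,j}_{J,t}\chi_{_I}(a)\chi_{_J}(a-b)/(a^i(a-b)^j)$ and $\delta^{\Sigma,j}_{J,s}\chi_{_I}(b)\chi_{_J}(b-a)/(b^i(b-a)^j)$. The crucial structural observation is a change of variable that decouples $a$ and $b$: for fixed $a \in A^+(d)$, as $b$ runs over $A^+(d)\setminus\{a\}$ the difference $c := a-b$ runs bijectively over the nonzero polynomials of degree strictly less than $d$, that is, over $A_{<}(d)\setminus\{0\}$ (the monic leading terms cancel, and conversely $a-c$ is automatically monic of degree $d$). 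Consequently the first family factors as $\sum_{I,J}\delta^{\Gamma,j}_{J,t}\big(\sum_{a\in A^+(d)}\chi_{_I}(a)/a^i\big)\big(\sum_{c\in A_{<}(d)\setminus\{0\}}\chi_{_J}(c)/c^j\big)$.

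The main point is evaluating the inner sum over the \emph{non-monic} variable $c$. Writing $c=\alpha c'$ with $\alpha\in\mathbf{F}_q^*$ and $c'$ monic, Proposition \ref{prop: character}(iii) gives $\chi_{_J}(c)/c^j = \alpha^{|J|-j}\chi_{_J}(c')/(c')^j$, so summing over the leading coefficient introduces the Gauss-type factor $\sum_{\alpha\in\mathbf{F}_q^*}\alpha^{|J|-j}$. This character sum equals $-1$ exactly when $|J|\equiv j\ (\mathrm{mod}\ q-1)$ and $0$ otherwise — precisely the dichotomy encoded in the definition of $\Delta^{\Gamma,j}_{J,t}$. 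Hence $\sum_{c\in A_{<}(d)\setminus\{0\}}\chi_{_J}(c)/c^j$ equals $-S_{<d}\begin{pmatrix}J\\j\end{pmatrix}$ or $0$, and invoking \eqref{eq: power sum formula 3} in the form $S_d\begin{pmatrix}I&J\\i&j\end{pmatrix}=S_d\begin{pmatrix}I\\i\end{pmatrix}S_{<d}\begin{pmatrix}J\\j\end{pmatrix}$ collapses the first family to $\sum\Delta^{\Gamma,j}_{J,t}S_d\begin{pmatrix}I&J\\i&j\end{pmatrix}$; the extra $-1$ converts $\delta$ into $\Delta$ via $-\delta^{\Gamma,j}_{J,t}=(-1)^{|J|+t-1}\binom{\Gamma}{J}\binom{j-1}{t-1}$. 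The second family is identical after interchanging the roles of $(a,\Sigma,s)$ and $(b,\Gamma,t)$ and setting $c=b-a$.

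The step requiring the most care is the bookkeeping around the character sum and the signs: one must check that the congruence $|J|\equiv j\ (\mathrm{mod}\ q-1)$ arising from $\sum_\alpha\alpha^{|J|-j}$ matches exactly the support of $\Delta$, and that the single factor $-1$ precisely reconciles $\delta$ with $\Delta$. Everything else is routine reindexing, and convergence never intervenes, as all sums are finite for each fixed $d$.
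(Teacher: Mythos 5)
Your proposal is correct and follows essentially the same route as the paper's proof: the diagonal/off-diagonal split, Lemma \ref{lem: sum-shuffle formula}, the change of variables $a-b=\alpha c$ with $c$ monic of degree $<d$, and the character sum $\sum_{\alpha\in\mathbf{F}_q^{\times}}\alpha^{|J|-j}$ converting $\delta$ into $\Delta$. The only cosmetic difference is that you factor the resulting double sum into a product of two single sums and then reassemble it via \eqref{eq: power sum formula 3}, whereas the paper recognizes the double sum over $d=\deg a>\deg c$ directly as the depth-two power sum $S_d\begin{pmatrix} I & J\\ i & j\end{pmatrix}$.
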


\begin{proof}
We have 
    \begin{align*}
        S_d\begin{pmatrix}
        \Sigma\\s
        \end{pmatrix} S_d\begin{pmatrix}
        \Gamma\\t
        \end{pmatrix} &= \left( \sum \limits_{a \in A^+(d)} \dfrac{\chi_{_\Sigma}(a)}{a^s} \right) \left( \sum \limits_{b \in A^+(d)} \dfrac{\chi_{_\Gamma}(b)}{b^t} \right)\\
        &= \sum \limits_{\substack{a,b \in A^+(d) \\ a = b}} \dfrac{\chi_{_\Sigma}(a)\chi_{_\Gamma}(b)}{a^sb^t} + \sum \limits_{\substack{a,b \in A^+(d) \\ a \ne b}} \dfrac{\chi_{_\Sigma}(a)\chi_{_\Gamma}(b)}{a^sb^t}\\
        &= \sum \limits_{a \in A^+(d)} \dfrac{\chi_{_{\Sigma \sqcup \Gamma}}(a)}{a^{s+t}} + \sum \limits_{\substack{a,b \in A^+(d) \\ a \ne b}} \dfrac{\chi_{_\Sigma}(a)\chi_{_\Gamma}(b)}{a^sb^t} \quad \text{(from Proposition \ref{prop: character}(iv))}\\
        &= S_d\begin{pmatrix}
        \Sigma \sqcup \Gamma\\ s+t
        \end{pmatrix} + \sum \limits_{\substack{a,b \in A^+(d) \\ a \ne b}} \dfrac{\chi_{_\Sigma}(a)\chi_{_\Gamma}(b)}{a^sb^t}.
    \end{align*}
    Note that if $a,b$ are two polynomials in $A^+(d)$ such that $a \ne b$, then there exists a unique element $\alpha \in \mathbf{F}_q^{\times}$ and a unique polynomial $c \in A^+$ with $\deg c < d$ such that $a - b = \alpha c$. One deduces from Lemma \ref{lem: sum-shuffle formula} and Proposition \ref{prop: character}(iii) that
    \begin{align*}
        &\sum \limits_{\substack{a,b \in A^+(d) \\ a \ne b}} \dfrac{\chi_{_\Sigma}(a)\chi_{_\Gamma}(b)}{a^sb^t}\\
        & = \sum \limits_{\substack{J \subseteq \Gamma ; I \sqcup J = \Sigma \sqcup \Gamma \\
i + j = s + t}}\sum \limits_{\substack{a,b \in A^+(d) \\ a \ne b}} \frac{\delta^{\Gamma,j}_{J,t}\chi_{_I}(a)\chi_{_J}(a-b)}{a^i(a-b)^j} + \sum \limits_{\substack{J \subseteq \Sigma ; I \sqcup J = \Sigma \sqcup \Gamma \\
i + j = s + t}}\sum \limits_{\substack{a,b \in A^+(d) \\ a \ne b}} \frac{\delta^{\Sigma,j}_{J,s}\chi_{_I}(b)\chi_{_J}(b-a)}{b^i(b-a)^j}\\
        & = \sum \limits_{\substack{J \subseteq \Gamma ; I \sqcup J = \Sigma \sqcup \Gamma \\
i + j = s + t}}\sum \limits_{\substack{\alpha \in \mathbf{F}_q^{\times}; a,c \in A^+ \\ d = \deg a > \deg c}} \frac{\delta^{\Gamma,j}_{J,t}\chi_{_I}(a)\chi_{_J}(\alpha c)}{a^i(\alpha c)^j} + \sum \limits_{\substack{J \subseteq \Sigma ; I \sqcup J = \Sigma \sqcup \Gamma \\
i + j = s + t}}\sum \limits_{\substack{\alpha \in \mathbf{F}_q^{\times}; b,c \in A^+ \\ d = \deg b > \deg c}} \frac{\delta^{\Sigma,j}_{J,s}\chi_{_I}(b)\chi_{_J}(\alpha c)}{b^i(\alpha c)^j}\\
%& = \sum \limits_{\substack{J \subseteq \Gamma ; I \sqcup J = \Sigma \sqcup \Gamma \\
%i + j = s + t}}\sum \limits_{\substack{\alpha \in \mathbf{F}_q^{\times}; a,c \in A^+ \\ d = \deg a > \deg c}} \frac{\alpha^{|J| - j}\delta^{\Gamma,j}_{J,t}\chi_{_I}(a)\chi_{_J}(c)}{a^ic^j} + \sum \limits_{\substack{J \subseteq \Sigma ; I \sqcup J = \Sigma \sqcup \Gamma \\
%i + j = s + t}}\sum \limits_{\substack{\alpha \in \mathbf{F}_q^{\times}; b,c \in A^+ \\ d = \deg b > \deg c}} \frac{\alpha^{|J| - j}\delta^{\Sigma,j}_{J,s}\chi_{_I}(b)\chi_{_J}(c)}{b^ic^j}\\
         &= \sum \limits_{\substack{J \subseteq \Gamma ; I \sqcup J = \Sigma \sqcup \Gamma \\
i + j = s + t}}\delta^{\Gamma,j}_{J,t} \left(\sum \limits_{\alpha \in \mathbf{F}_q^{\times}} \alpha^{|J| - j}\right)\left(\sum \limits_{\substack{a,c \in A^+ \\ d = \deg a > \deg c}} \frac{\chi_{_I}(a)\chi_{_J}(c)}{a^ic^j}\right) \\
&+ \sum \limits_{\substack{J \subseteq \Sigma ; I \sqcup J = \Sigma \sqcup \Gamma \\
i + j = s + t}}\delta^{\Sigma,j}_{J,s}\left(\sum \limits_{\alpha \in \mathbf{F}_q^{\times}} \alpha^{|J| - j}\right) \left(\sum \limits_{\substack{b,c \in A^+ \\ d = \deg b > \deg c}} \frac{\chi_{_I}(b)\chi_{_J}(c)}{b^ic^j} \right)\\
        &=  \sum \limits_{\substack{J \subseteq \Gamma ; I \sqcup J = \Sigma \sqcup \Gamma \\
i + j = s + t}} \Delta^{\Gamma,j}_{J,t} S_d \begin{pmatrix}
 I  &   J\\
i & j  \end{pmatrix} + \sum \limits_{\substack{J \subseteq \Sigma ; I \sqcup J = \Sigma \sqcup \Gamma\\
i + j = s + t}} \Delta^{\Sigma,j}_{J,s} S_d \begin{pmatrix}
 I  &   J\\
i & j  \end{pmatrix}.
    \end{align*}
    The last equality follows from the following identity:
    \begin{align*}
\sum \limits_{\alpha \in \mathbf{F}_q^\times} \alpha^n =\begin{cases} -1, & \quad \text{if } n \equiv 0 \ (\text{mod } q - 1), \\
0, & \quad \text{otherwise},
\end{cases}
\end{align*}
for $n \in \mathbf{N}$. This proves the proposition.
\end{proof}

\begin{remark} 
When $ s = t = 1$, we recover the sum-shuffle formula for power sums of depth $1$ due to Pellarin (see \cite[Theorem 3.1]{Pel17}). When $ \Sigma = \Gamma =\emptyset$, we recover the sum-shuffle formula for power sums of depth $1$ due to Chen (see \cite[Remark 3.2]{Che15}).
\end{remark}

Let $d$ tend to infinity, we obtain the following explicit sum-shuffle formula for the product of two zeta values.

\begin{theorem} \label{thm: sum-shuffle formula}
Let $\Sigma,\Gamma$ be two finite weighted subsets of $\mathbf{N}^*$, and let $s,t$ be two positive integers. We have  
\begin{align*}
    \zeta_A \begin{pmatrix}
 \Sigma \\
s  \end{pmatrix}  \zeta_A \begin{pmatrix}
 \Gamma \\
t  \end{pmatrix} = & \ \zeta_A \begin{pmatrix}
 \Sigma  &   \Gamma\\
s & t  \end{pmatrix} + \zeta_A \begin{pmatrix}
 \Gamma  &   \Sigma\\
t & s  \end{pmatrix} + \zeta_A \begin{pmatrix}
 \Sigma \sqcup \Gamma \\
s + t  \end{pmatrix} \\
& + \sum \limits_{\substack{J \subseteq \Gamma ; I \sqcup J = \Sigma \sqcup \Gamma \\
i + j = s + t}} \Delta^{\Gamma,j}_{J,t} \zeta_A \begin{pmatrix}
 I  &   J\\
i & j  \end{pmatrix} + \sum \limits_{\substack{J \subseteq \Sigma ; I \sqcup J = \Sigma \sqcup \Gamma\\
i + j = s + t}} \Delta^{\Sigma,j}_{J,s} \zeta_A \begin{pmatrix}
 I  &   J\\
i & j  \end{pmatrix},
\end{align*}
where the indices $i,j$ run through all positive integers.
\end{theorem}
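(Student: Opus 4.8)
The plan is to derive the identity for zeta values from the power-sum identity of Proposition \ref{prop: sum-shuffle formula} by summing over all degrees and carefully separating the diagonal and off-diagonal contributions. Since $\zeta_A\begin{pmatrix}\Sigma\\s\end{pmatrix} = \sum_{d \geq 0} S_d\begin{pmatrix}\Sigma\\s\end{pmatrix}$ and likewise for $\Gamma$, expanding the product yields a double series indexed by pairs $(d,e) \in \mathbf{N}^2$, whose $(d,e)$-term is $S_d\begin{pmatrix}\Sigma\\s\end{pmatrix} S_e\begin{pmatrix}\Gamma\\t\end{pmatrix}$. I would first record that this family is summable in the Tate algebra $\mathbf{T}_{\Sigma \sqcup \Gamma}(\mathbf{C}_\infty)$: in the non-archimedean setting, summability only requires that the terms tend to $0$, which follows at once from the convergence of each factor (so that $S_d\begin{pmatrix}\Sigma\\s\end{pmatrix} \to 0$ and $S_e\begin{pmatrix}\Gamma\\t\end{pmatrix} \to 0$). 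Consequently the double series may be reorganized at will.

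Next I would split the index set $\mathbf{N}^2$ into the three regions $\{d > e\}$, $\{d < e\}$, and the diagonal $\{d = e\}$. For the region $d > e$, summing first over $e \in \{0, \dotsc, d-1\}$ produces $S_d\begin{pmatrix}\Sigma\\s\end{pmatrix} S_{<d}\begin{pmatrix}\Gamma\\t\end{pmatrix}$, which by formula \eqref{eq: power sum formula 3} equals $S_d\begin{pmatrix}\Sigma & \Gamma\\ s & t\end{pmatrix}$; summing over $d$ then gives $\zeta_A\begin{pmatrix}\Sigma & \Gamma\\ s & t\end{pmatrix}$. By the symmetric argument, the region $d < e$ contributes $\zeta_A\begin{pmatrix}\Gamma & \Sigma\\ t & s\end{pmatrix}$. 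These two regions account precisely for the depth-$2$ zeta values on the right-hand side.

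The diagonal is where the real content enters: for each fixed $d$, the term $S_d\begin{pmatrix}\Sigma\\s\end{pmatrix} S_d\begin{pmatrix}\Gamma\\t\end{pmatrix}$ is exactly what Proposition \ref{prop: sum-shuffle formula} rewrites, yielding $S_d\begin{pmatrix}\Sigma \sqcup \Gamma\\ s+t\end{pmatrix}$ together with the two finite sums of $\Delta$-weighted depth-$2$ power sums. Summing this identity over $d \geq 0$, and using $\sum_{d} S_d(\mathtt{A}) = \zeta_A(\mathtt{A})$ along with the fact that the index set $\{(I,J,i,j)\}$ of the correction sums is finite and independent of $d$, I would interchange the outer sum over $d$ with the inner finite sums; this converts each power-sum correction $\sum_d \Delta^{\Gamma,j}_{J,t} S_d\begin{pmatrix}I & J\\ i & j\end{pmatrix}$ into $\Delta^{\Gamma,j}_{J,t}\,\zeta_A\begin{pmatrix}I & J\\ i & j\end{pmatrix}$, and likewise for the $\Delta^{\Sigma,j}_{J,s}$ terms. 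Adding the three regional contributions gives exactly the asserted formula.

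I do not anticipate a serious obstacle, as the theorem is essentially a repackaging of Proposition \ref{prop: sum-shuffle formula}: the combinatorial heavy lifting has already been carried out in Lemma \ref{lem: sum-shuffle formula} and in the proof of that proposition. The only points requiring genuine care are the legitimacy of reorganizing the double series into the three regions and of interchanging the outer sum over $d$ with the inner finite sums over the correction indices; both are justified by non-archimedean summability once one verifies that the relevant terms tend to $0$.
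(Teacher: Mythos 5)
Your proposal is correct and is essentially the paper's own argument: the paper derives the theorem from Proposition \ref{prop: sum-shuffle formula} simply by "letting $d$ tend to infinity," which is precisely your decomposition of the double series into the regions $d>e$, $d<e$ (giving the two depth-$2$ zeta values via \eqref{eq: power sum formula 3}) and the diagonal $d=e$ (where the power-sum identity is applied termwise and summed). Your added justification of non-archimedean summability and the interchange of sums merely makes explicit what the paper leaves implicit.
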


\begin{proposition} \label{prop: shuffle relations MZV power sum}
Let $\mathtt{A}, \mathtt{B}$ be two admissible arrays which are compositions of $\begin{pmatrix}
 \Sigma_1  \\
w_1  \end{pmatrix}, \begin{pmatrix}
 \Sigma_2  \\
w_2  \end{pmatrix}$, respectively. The following assertions hold: 
\begin{enumerate}[label=\textnormal{(\roman*)}, align= left]
\item There exist elements $\alpha_i \in \mathbf{F}_q$ and admissible arrays $\mathtt{C}_i$, which are compositions of $\begin{pmatrix}
 \Sigma_1 \sqcup \Sigma_2  \\
w_1 + w_2  \end{pmatrix}$, such that for all $d \in \mathbf{N}$,
\begin{equation*}
        S_d(\mathtt{A}) S_d(\mathtt{B}) = \sum \limits_i \alpha_i S_d(\mathtt{C}_i).
    \end{equation*}
    \item There exist elements $\alpha'_i \in \mathbf{F}_q$ and admissible arrays $\mathtt{C}'_i$, which are compositions of $\begin{pmatrix}
 \Sigma_1 \sqcup \Sigma_2  \\
w_1 + w_2  \end{pmatrix}$, such that for all $d \in \mathbf{N}$,
\begin{equation*}
        S_{<d}(\mathtt{A}) S_{<d}(\mathtt{B}) = \sum \limits_i \alpha'_i S_{<d}(\mathtt{C}'_i).
    \end{equation*}
    \item There exist elements $\alpha''_i \in \mathbf{F}_q$ and admissible arrays $\mathtt{C}''_i$, which are compositions of $\begin{pmatrix}
 \Sigma_1 \sqcup \Sigma_2  \\
w_1 + w_2  \end{pmatrix}$, such that for all $d \in \mathbf{N}$,
\begin{equation*}
        S_d(\mathtt{A}) S_{<d}(\mathtt{B}) = \sum \limits_i \alpha''_i S_d(\mathtt{C}''_i).
    \end{equation*}
\end{enumerate}
Here the elements $\alpha_i, \alpha'_i, \alpha''_i$ are independent of the degree $d$.
\end{proposition}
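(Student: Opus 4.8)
The plan is to reduce all three assertions to the depth-one sum-shuffle formula for power sums (Proposition \ref{prop: sum-shuffle formula}) together with the concatenation identity \eqref{eq: power sum formula 3}, by a single simultaneous induction on the total depth $n = \operatorname{depth}(\mathtt{A}) + \operatorname{depth}(\mathtt{B})$. I adopt the conventions $S_{<d}(\emptyset) = 1$ and $S_d(\emptyset) = \delta_{d,0}$ for the empty array so that degenerate cases are handled uniformly. The three recursive ingredients are: (a) writing a power sum as head times truncated tail, $S_d(\mathtt{A}) = S_d(\mathtt{a}_1)\, S_{<d}(\mathtt{A}')$, where $\mathtt{a}_1$ is the depth-one head of $\mathtt{A}$ and $\mathtt{A}'$ its tail, which is precisely \eqref{eq: power sum formula 3}; (b) the depth-one product formula of Proposition \ref{prop: sum-shuffle formula}; and (c) the elementary decomposition
\[
S_{<d}(\mathtt{A})\, S_{<d}(\mathtt{B}) = \sum_{k=0}^{d-1}\Bigl[\, S_k(\mathtt{A})\, S_{<k}(\mathtt{B}) + S_{<k}(\mathtt{A})\, S_k(\mathtt{B}) + S_k(\mathtt{A})\, S_k(\mathtt{B})\,\Bigr],
\]
obtained by splitting the double sum $\sum_{0 \le k,l < d} S_k(\mathtt{A}) S_l(\mathtt{B})$ according to $k > l$, $k < l$, and $k = l$.

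Within each level $n$ I would establish the assertions in the order (iii), then (i), then (ii). For (iii), peeling the head off $\mathtt{A}$ via (a) gives $S_d(\mathtt{A})\, S_{<d}(\mathtt{B}) = S_d(\mathtt{a}_1)\bigl[\, S_{<d}(\mathtt{A}')\, S_{<d}(\mathtt{B})\,\bigr]$; the bracketed factor is an instance of (ii) at total depth $n-1$, hence by the inductive hypothesis equals $\sum_l \gamma_l\, S_{<d}(\mathtt{E}_l)$, and reattaching the head through \eqref{eq: power sum formula 3} turns each $S_d(\mathtt{a}_1)\, S_{<d}(\mathtt{E}_l)$ into a single power sum of a composition. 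For (i), peeling heads off both arrays gives $S_d(\mathtt{A})\, S_d(\mathtt{B}) = \bigl[\, S_d(\mathtt{a}_1)\, S_d(\mathtt{b}_1)\,\bigr]\bigl[\, S_{<d}(\mathtt{A}')\, S_{<d}(\mathtt{B}')\,\bigr]$; the first bracket is expanded by Proposition \ref{prop: sum-shuffle formula} into $\sum \beta\, S_d(\mathtt{D})$ with $\operatorname{depth}(\mathtt{D}) \le 2$, the second by (ii) at total depth $n-2$, and the resulting products $S_d(\mathtt{D})\, S_{<d}(\mathtt{E})$ are instances of (iii) at total depth $\le n$, already available at the current level. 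Finally (ii) follows by substituting the just-proved (iii) and (i), applied at each fixed index $k$, into decomposition (c) and using $\sum_{k<d} S_k(\mathtt{C}) = S_{<d}(\mathtt{C})$.

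I would emphasize that the dependency graph is acyclic: (iii)$_n$ invokes only (ii)$_{n-1}$; (i)$_n$ invokes (iii)$_n$ and (ii)$_{n-2}$; (ii)$_n$ invokes (iii)$_n$ and (i)$_n$, each already proved at the current level. The base cases $n \le 1$ are trivial under the empty-array conventions, and $n = 2$ (i.e. $r = s = 1$) is exactly \eqref{eq: power sum formula 3} for (iii), Proposition \ref{prop: sum-shuffle formula} for (i), and decomposition (c) for (ii). Throughout, all coefficients produced lie in $\mathbf{F}_q$ and are independent of $d$: those from Proposition \ref{prop: sum-shuffle formula} are the constants $\Delta^{\bullet,\bullet}_{\bullet,\bullet}$, the concatenation step introduces no new coefficients, and summation over $k$ preserves both properties.

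The remaining bookkeeping, which is routine but essential, is to verify that every array appearing on the right is genuinely a composition of $\begin{pmatrix}\Sigma_1 \sqcup \Sigma_2 \\ w_1 + w_2\end{pmatrix}$; this holds because each elementary operation conserves total type and total weight — Proposition \ref{prop: sum-shuffle formula} outputs compositions of the union type with summed weight, and concatenation adds types and weights — so the invariant propagates through the induction. The only genuine subtlety, and hence the step I would set up most carefully, is the organization of the simultaneous induction: one must check that the three reductions are well-founded in the order (iii)$\to$(i)$\to$(ii) at each level and, in particular, that (iii)$_n$ is invoked inside the proof of (i)$_n$ only after it has already been established, so that no circularity arises.
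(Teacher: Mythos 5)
Your skeleton — the depth-one sum-shuffle formula (Proposition \ref{prop: sum-shuffle formula}), the concatenation identity \eqref{eq: power sum formula 3}, and the three-term decomposition of $S_{<d}(\mathtt{A})S_{<d}(\mathtt{B})$ — is exactly the paper's toolkit, but your choice of induction variable creates a genuine gap. You induct on the \emph{total depth} $n$, yet depth is not conserved by the rewriting you perform: already the product of two depth-one power sums produces depth-two arrays. Concretely, in your proof of assertion (i) at level $n$, the arrays $\mathtt{E}_l$ obtained from invoking (ii) at level $n-2$ are only known (from the statement being proved) to be compositions of the appropriate type and weight; \emph{nothing bounds their depth}. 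Hence your claim that the products $S_d(\mathtt{D}_m)\,S_{<d}(\mathtt{E}_l)$ ``are instances of (iii) at total depth $\le n$'' is unjustified: it requires $\operatorname{depth}(\mathtt{E}_l)\le n-2$, which is true but is not part of your inductive hypothesis and is never proved. The subtlety you flag at the end (proving (iii) before (i) at each level) is not the real danger; the real danger is that, without a depth bound on the output arrays, the invocation of (iii) inside (i) at level $n$ may a priori be at a level \emph{larger} than $n$, and the induction is then not well-founded.

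There are two ways to repair this. (a) Strengthen all three assertions to include ``and each output array $\mathtt{C}_i$ has depth at most $\operatorname{depth}(\mathtt{A})+\operatorname{depth}(\mathtt{B})$''; one checks this bound propagates through each of your three steps (concatenation adds $1$ to a depth bounded by $n-1$; the sum-shuffle outputs have depth $\le 2$; summation over $k$ preserves it), and then your dependency graph really is acyclic. (b) Simpler, and what the paper actually does: induct on the total \emph{weight} $w_1+w_2$, following the proof of \cite[Proposition 2.1]{ND21}, proving (i) and (ii) simultaneously. Weight, unlike depth, is conserved by construction — every array produced anywhere is a composition of $\begin{pmatrix}\Sigma_1\sqcup\Sigma_2\\ w_1+w_2\end{pmatrix}$ — so the invariant needed for well-foundedness is built into the statement itself: after peeling the head $\mathtt{d}_1$ off a depth-$\le 2$ array $\mathtt{D}_m$, the residual product $S_{<d}(\mathtt{D}_m')\,S_{<d}(\mathtt{E}_l)$ has total weight strictly less than $w_1+w_2$, and plain induction applies with no auxiliary bookkeeping. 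The paper then deduces (iii) in one line from (ii) and \eqref{eq: power sum formula 3}, exactly as in your head-peeling step, rather than carrying it inside the inductive loop.
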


\begin{proof}
    Using Proposition \ref{prop: sum-shuffle formula}, the proof of Part (i) and Part (ii) is proceeded by induction on $w_1 + w_2$ and follows from similar arguments as in the proof of \cite[Proposition 2.1]{ND21}. Part (iii) follows from Part (ii) and \eqref{eq: power sum formula 3}.
\end{proof}

\section{A formula for power sums of Pellarin's multiple zeta values} \label{sec: a formula for power sums}

Throughout this section, we restrict our attention to the case $\Sigma$ is a usual finite subset of $\mathbf{N}^*$ with $|\Sigma| < q$ (see Remark \ref{rmk: finite subsets as fws}). In \cite{GP21}, Gezmis and Pellarin derived a formula \cite[Formula (22)]{GP21} for the power sums of Pellarin's MZVs using partial higher divided derivatives, but their formula was found to be incorrect, which partially affected the proof of \cite[Corollary 5.4]{GP21}. In this section, we provide a corrected version of their formula using a different approach (see Proposition \ref{prop: explicit formula}) and then use this formula to reprove  \cite[Corollary 5.4]{GP21} (see Proposition \ref{prop: Z = Z+}).

%%%%%%%%%%%%%%%%%%%%%%

\subsection{Preliminary results}
${}$
\par Set $\ell_0 = 1$ and $\ell_d = \prod^d_{i=1}(\theta - \theta^{q^i})$ for all $d \in \mathbf{N}^*$.  Set $D_0 = 1$ and $D_d = \prod^{d-1}_{i=0}(\theta^{q^d} - \theta^{q^i})$ for all $d \in \mathbf{N}^*$. We set $E_0(x) = x$, and for $d \in \mathbf{N}^*$,
\begin{equation*}
    E_d(x) = D_d^{-1}\prod \limits_{a \in A_<(d)} (x + a).
\end{equation*}
 Based on a result due to Carlitz (see \cite[Theorem 2.1]{Car35}), one deduces the following expansion:
\begin{equation}\label{eq: ed-formula}
    E_d(x) = \sum \limits_{k = 0}^d \dfrac{x^{q^k}}{D_k \ell_{d-k}^{q^k}}.
\end{equation}
In the following proposition, we recall some properties of the polynomial $E_d(x)$. For the proof and further properties of this polynomial, we refer the reader to \cite[Section 3.5]{Gos96}.
\begin{proposition} \label{prop: Gos}
For all $d \in \mathbf{N}$, we have the following properties:
\begin{enumerate}[label=\textnormal{(\roman*)}, align= left]
    \item $E_d(x)$ is an $\mathbf{F}_q$-linear polynomial.
    \item For $a \in A$ such that $\deg a < d$, $E_d(a) = 0$.
    \item $E_d(\theta^d) = 1$. 
\end{enumerate}
\end{proposition}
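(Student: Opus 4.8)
The final statement is Proposition~\ref{prop: Gos}, which recalls three properties of the polynomial $E_d(x)$.

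The plan is to prove all three properties directly from the explicit expansion \eqref{eq: ed-formula}, namely $E_d(x) = \sum_{k=0}^d x^{q^k}/(D_k \ell_{d-k}^{q^k})$, together with the factored definition $E_d(x) = D_d^{-1}\prod_{a \in A_<(d)}(x+a)$. First I would establish (i): since every exponent $q^k$ appearing in \eqref{eq: ed-formula} is a power of $q$, the map $x \mapsto x^{q^k}$ is additive in characteristic $p$ and $\mathbf{F}_q$-linear (as $\alpha^{q^k} = \alpha$ for $\alpha \in \mathbf{F}_q$); a coefficient-wise combination of $\mathbf{F}_q$-linear maps is again $\mathbf{F}_q$-linear, so $E_d(x)$ is an $\mathbf{F}_q$-linear (additive) polynomial. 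Alternatively this is immediate from the product formula, since the roots $\{-a : a \in A_<(d)\} = A_<(d)$ form an $\mathbf{F}_q$-vector space and a monic additive polynomial is characterized by having an $\mathbf{F}_q$-subspace as its set of roots, each with multiplicity one.

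Next I would prove (ii): for $a \in A$ with $\deg a < d$ we have $a \in A_<(d)$, hence $-a \in A_<(d)$ as well (since $A_<(d)$ is closed under $\mathbf{F}_q$-scalar multiplication), so the factor $(x + (-a)) = (x-a)$ — or more precisely one of the factors $(x + a')$ with $a' = -a \in A_<(d)$ — vanishes at $x = a$. Thus the product $\prod_{a' \in A_<(d)}(a + a')$ contains a zero factor and $E_d(a) = 0$. For (iii) I would substitute $x = \theta^d$ into the expansion \eqref{eq: ed-formula} or, more cleanly, argue using the factorization: Carlitz's result underlying \eqref{eq: ed-formula} gives $\prod_{a \in A_<(d)}(x+a) = \sum_{k=0}^d (D_d/D_k\ell_{d-k}^{q^k})\, x^{q^k}$, and evaluating the normalization so that the leading behaviour forces $E_d(\theta^d)=1$; concretely I would observe that the top-degree term of $E_d$ in \eqref{eq: ed-formula} is the $k=d$ term $x^{q^d}/(D_d \ell_0^{q^d}) = x^{q^d}/D_d$, and combine this with the known identity relating $D_d$ to the product $\prod_{a \in A_<(d)}(\theta^d + a)$ at $x=\theta^d$.

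The only genuine subtlety — and the step I expect to be the main obstacle — is property (iii), since it is not purely formal and relies on the precise normalization by $D_d$ in the definition of $E_d$. Rather than recompute Carlitz's constant from scratch, I would simply cite \cite[Section 3.5]{Gos96} as the paper already does, since the identity $E_d(\theta^d)=1$ is exactly the statement that $E_d$ is normalized so as to interpolate the value $1$ at $\theta^d$; one checks it by noting $E_d$ is the unique $\mathbf{F}_q$-linear polynomial of degree $q^d$ vanishing on $A_<(d)$ and taking the value $1$ at $\theta^d$, which pins down the leading coefficient $D_d^{-1}$ via the product $\prod_{a\in A_<(d)}(\theta^d+a) = D_d$. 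Since the proposition is explicitly stated as a recollection with a reference, I would keep the argument brief: (i) and (ii) follow immediately from \eqref{eq: ed-formula} and the product formula respectively, and (iii) is recorded in Goss's book.
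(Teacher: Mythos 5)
Your proposal is correct, but note that the paper itself offers no proof of this proposition at all: it is stated as a recollection, with everything deferred to \cite[Section 3.5]{Gos96}. So your arguments are a genuine expansion rather than a parallel of the paper's route. Your treatments of (i) and (ii) are exactly right: (i) follows from \eqref{eq: ed-formula} because each monomial $x^{q^k}$ is additive in characteristic $p$ and fixes $\mathbf{F}_q$-scalars, and (ii) follows from the product definition because $-a \in A_{<}(d)$ whenever $a \in A_{<}(d)$, so the factor $x + (-a)$ kills $E_d(a)$. For (iii), the one substantive input you need --- and which you correctly isolate --- is the classical Carlitz identity
\begin{equation*}
\prod_{a \in A_{<}(d)} (\theta^d + a) \;=\; \prod_{b \in A^+(d)} b \;=\; D_d,
\end{equation*}
where the first equality holds because $a \mapsto \theta^d + a$ is a bijection from $A_{<}(d)$ onto $A^+(d)$, and the second is Carlitz's theorem that $D_d$ equals the product of all monic polynomials of degree $d$ (this is precisely the content of \cite[Section 3.5]{Gos96}; see also \cite{Car35}). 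With that identity, $E_d(\theta^d) = D_d^{-1}\prod_{a \in A_{<}(d)}(\theta^d + a) = 1$ and you are done; your alternative remark that $E_d$ is the unique $\mathbf{F}_q$-linear polynomial of degree $q^d$ vanishing on $A_{<}(d)$ and taking value $1$ at $\theta^d$ is also fine, but it is a consequence of, not a substitute for, the displayed product formula, so be careful not to present it as an independent argument. In short: where the paper cites, you prove, and the only citation you genuinely need is the one classical product formula for $D_d$; both routes are legitimate, yours having the advantage of being self-contained.
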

\noindent From Carlitz's work (see \cite[Section 9]{Car35}), we obtain the following generating function:
\begin{equation} \label{eq: Carlitz generating function}
    \dfrac{1}{l_d(1 - E_d(x))} = \sum\limits_{n \geq 0} S_d(n + 1) x^n.
\end{equation}

Set $b_0(t) = 1 $ and $ b_d(t) = \prod _{i = 0}^{d-1}(t-\theta^{q^i})$ for all $d \in \mathbf{N}^*$. For $d \in \mathbf{N}^*$, we define
\begin{equation*}
    P_d(t,x) = \sum \limits_{j = 0}^{d-1}b_j(t)E_j(x).
\end{equation*}
Note that the polynomial $P_d(t,x)$ is $\mathbf{F}_q$-linear in indeterminate $x$ by Proposition \ref{prop: Gos}(i). When $x = \theta^d$, one deduces from Proposition \ref{prop: Gos}(iii) and the same argument as in the proof of \cite[Corollary 2.11]{Per14} that 
\begin{equation}  \label{eq: values of  P_d(t,x)}
        P_d(t,\theta^d) = \sum \limits_{j = 0}^{d-1}b_j(t)E_j(\theta^d) = \chi_t(\theta^d) - b_d(t).
\end{equation}
The following result is deduced from a formula due to Perkins (see \cite[Proposition 2.17]{Per14}).
\begin{proposition} \label{prop: Perkins's formula}
Let $J$ be a finite subset of $\mathbf{N}^*$ such that $|J| < q$. For all $d \in \mathbf{N}^*$, we have
\begin{equation*}
    \sum \limits_{a \in A_<(d)} \frac{\chi_{_J}(a) \ell_d E_d(x - a)}{x - a} = \prod \limits_{j \in J} P_d(t_j,x).
\end{equation*}
\end{proposition}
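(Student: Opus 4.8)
The plan is to prove the identity by polynomial interpolation on the finite set $A_<(d)$: I will show that both sides, viewed as polynomials in $x$, have degree strictly less than $q^d = |A_<(d)|$, and that they take the same value $\chi_{_J}(c)$ at every point $c \in A_<(d)$. Since a polynomial of degree $< q^d$ is determined by its values at $q^d$ distinct points, this forces the two sides to coincide.

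First I would rewrite the left-hand side. Because $E_d$ is $\mathbf{F}_q$-linear and vanishes on $A_<(d)$ (Proposition~\ref{prop: Gos}(i),(ii)), for $a \in A_<(d)$ we have $E_d(x-a) = E_d(x) - E_d(a) = E_d(x)$. Writing $G(x) = \prod_{b \in A_<(d)}(x-b)$ and using that $b \mapsto -b$ permutes $A_<(d)$, the definition of $E_d$ gives $G(x) = D_d E_d(x)$. Hence the left-hand side equals $\tfrac{\ell_d}{D_d}\sum_{a \in A_<(d)}\chi_{_J}(a)\prod_{b \ne a}(x-b)$, which is visibly a polynomial of degree $\le q^d - 1$. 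To evaluate it at $c \in A_<(d)$, observe that every summand with $a \ne c$ contains the factor $(x-c)$, so only the term $a = c$ survives, giving $\tfrac{\ell_d}{D_d}\chi_{_J}(c)\prod_{b\ne c}(c-b)$. The remaining product is $G'(c)$, and from the expansion \eqref{eq: ed-formula} one gets $E_d'(x) = 1/\ell_d$, so $G'(x) = D_d/\ell_d$ is constant. Therefore the left-hand side takes the value $\chi_{_J}(c)$ at each $c \in A_<(d)$.

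Next I would treat the right-hand side. Since $\deg_x E_i(x) = q^i$, each $P_d(t_j,x)$ has degree $q^{d-1}$ in $x$, so $\prod_{j \in J}P_d(t_j,x)$ has degree $|J|\,q^{d-1} \le (q-1)q^{d-1} < q^d$; this is exactly where the hypothesis $|J| < q$ enters. It then remains to show $P_d(t,c) = \chi_t(c)$ for every $c \in A_<(d)$. Both $c \mapsto P_d(t,c)$ and $c \mapsto \chi_t(c)=c(t)$ are $\mathbf{F}_q$-linear on $A_<(d)$, so it suffices to check equality on the basis $\{1,\theta,\dots,\theta^{d-1}\}$. For $c = \theta^k$ with $1 \le k \le d-1$, the terms with $i > k$ vanish because $E_i(\theta^k)=0$, and reindexing the value formula \eqref{eq: values of  P_d(t,x)} (together with $E_k(\theta^k)=1$) yields $\sum_{i=0}^{k}b_i(t)E_i(\theta^k) = t^k = \chi_t(\theta^k)$; the case $k=0$ is immediate since $E_0(1)=1$ and $E_i(1)=0$ for $i\ge 1$. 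Multiplying over $j \in J$ gives $\prod_{j\in J}P_d(t_j,c) = \prod_{j\in J}c(t_j) = \chi_{_J}(c)$ for all $c \in A_<(d)$.

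Finally, both sides are polynomials in $x$ of degree $< q^d$ that agree at the $q^d$ points of $A_<(d)$, hence their difference is a polynomial of degree $< q^d$ with $q^d$ roots, and so is zero. The main obstacle is the bookkeeping in the two value computations — in particular identifying $\prod_{b\ne c}(c-b)$ with the constant $D_d/\ell_d$ via $E_d' = 1/\ell_d$, and establishing the interpolation property $P_d(t,c)=\chi_t(c)$ from \eqref{eq: values of  P_d(t,x)}; once these are in hand, the degree bound (precisely the point where $|J|<q$ is used) closes the argument immediately. Alternatively, one may simply invoke Perkins's formula \cite[Proposition~2.17]{Per14} for the depth-one values, but the interpolation argument above is self-contained.
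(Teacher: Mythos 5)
Your proof is correct, but it takes a genuinely different route from the paper, which does not argue this proposition at all: the paper's entire proof is a citation to Perkins, \cite[Corollary 2.13, Proposition 2.17]{Per14}. Your self-contained interpolation argument is sound at every step: the reduction $E_d(x-a)=E_d(x)$ for $a\in A_<(d)$ via $\mathbf{F}_q$-linearity; the identification of the left-hand side with $\frac{\ell_d}{D_d}\sum_{a}\chi_{_J}(a)\prod_{b\neq a}(x-b)$ and of $\prod_{b\neq c}(c-b)$ with the constant $G'(c)=D_d/\ell_d$ (only the $k=0$ term of \eqref{eq: ed-formula} survives differentiation in characteristic $p$); the degree bound $|J|\,q^{d-1}\leq (q-1)q^{d-1}<q^d$, which is indeed the only place the hypothesis $|J|<q$ enters; and the verification $P_d(t,c)=\chi_t(c)$ on $A_<(d)$ by $\mathbf{F}_q$-linearity, checking the basis elements $\theta^k$ using the paper's evaluation of $P_k(t,\theta^k)$ with $k$ in place of $d$ together with $E_k(\theta^k)=1$. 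What your approach buys is self-containedness and a transparent explanation of why the cardinality restriction on $J$ is needed; what the paper's citation buys is brevity and a pointer to Perkins's framework, where such depth-one evaluation formulas are developed in greater generality.
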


\begin{proof}
     See \cite[Corollary 2.13, Proposition 2.17]{Per14}.
\end{proof}
 \noindent Using \eqref{eq: ed-formula}, we may write the polynomial $P_d(t,x)$ as a polynomial in indeterminate $x$ as follows:
\begin{equation*}
    P_d(t,x) = \sum \limits_{k = 0}^{d-1}\sum \limits_{j = k}^{d-1}\dfrac{b_j(t)}{D_k\ell_{j-k}^{q^k}}x^{q^k}.
\end{equation*}
For the convenience of writing, we denote the coefficient $\sum \limits_{j = k}^{d-1}\dfrac{b_j(t)}{D_k\ell_{j-k}^{q^k}}$ of $x^{q^k}$ by $Q_{d,k}(t)$. When $k = 0$, one deduces from \cite[Lemma 8]{Pel16} that 
\begin{equation} \label{eq: values of  Q_{d,0}(t)}
   Q_{d,0}(t) = \sum \limits_{j = 0}^{d-1} \frac{b_j(t)}{\ell_j} = \frac{b_d(t)}{\ell_{d-1}(t-\theta)}.
\end{equation}

%%%%%%%%%%%%%%%%%%%%%%%%%%%%%%

\subsection{Main result}
${}$
\par Let $\Sigma$ be a finite subset of $\mathbf{N}^*$. For the convenience of writing, we define
\begin{equation*}
    b_d(\Sigma) =  \prod \limits_{i \in \Sigma}b_d(t_i).
\end{equation*}

\begin{proposition} \label{prop: explicit formula}
Let $\Sigma$ be a finite subset of $\mathbf{N}^*$ such that $|\Sigma| < q$, and let $n$ be a natural number. For all $d \in \mathbf{N}$, we have 
 \begin{align*}
     S_d \begin{pmatrix}
     \Sigma \\ n+1
     \end{pmatrix} &= S_d(n+1)b_d(\Sigma)  + \sum \limits_{\substack{J \subsetneq \Sigma \\ I \sqcup J =  \Sigma}}  \sum \limits_{\substack{0 \leq k_i \leq d - 1, \ i \in I \\ n -  {\textstyle\sum_{i\in I}} q^{k_i} + 1 > 0}} S_d(n -  \textstyle\sum_{i\in I} q^{k_i} + 1) \prod \limits_{i \in I} Q_{d,k_i}(t_i) b_d(J).
 \end{align*}
 \end{proposition}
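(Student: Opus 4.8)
The plan is to pass to a generating function in an auxiliary variable $x$ and recognize the answer as a product of the Carlitz generating function \eqref{eq: Carlitz generating function} with Perkins-type factors. Fix $d\in\mathbf{N}$. The case $d=0$ is immediate: $A^+(0)=\{1\}$ forces $S_0\begin{pmatrix}\Sigma\\ n+1\end{pmatrix}=1=S_0(n+1)b_0(\Sigma)$, while the correction sum is empty since no $k_i$ lies in the vacuous range $0\le k_i\le -1$. So assume $d\ge 1$. Because $A^+(d)$ is finite,
\[
F_{\Sigma}(x):=\sum_{a\in A^+(d)}\frac{\chi_{_\Sigma}(a)}{a-x}
\]
is a genuine rational function of $x$, and expanding each summand around $x=0$ gives $F_{\Sigma}(x)=\sum_{n\ge 0}S_d\begin{pmatrix}\Sigma\\ n+1\end{pmatrix}x^n$. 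It therefore suffices to compute $F_{\Sigma}(x)$ in closed form and read off the coefficient of $x^n$.

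First I would parametrize $a=\theta^d+b$ with $b\in A_<(d)$ and set $y=x-\theta^d$, so that $a-x=b-y$. Since $\Sigma$ is a usual subset, every coefficient $\binom{\Sigma}{J}$ equals $1$, so Proposition \ref{prop: character}(i) gives $\chi_{_\Sigma}(\theta^d+b)=\sum_{I\sqcup J=\Sigma}\chi_{_I}(\theta^d)\chi_{_J}(b)$. Because $E_d(b)=0$ for $\deg b<d$ (Proposition \ref{prop: Gos}(ii)) and $E_d$ is $\mathbf{F}_q$-linear, one has $E_d(y-b)=E_d(y)$, so Perkins's formula (Proposition \ref{prop: Perkins's formula}) yields $\sum_{b\in A_<(d)}\chi_{_J}(b)/(y-b)=\prod_{j\in J}P_d(t_j,y)/(\ell_d E_d(y))$; here the hypothesis $|\Sigma|<q$ enters through $|J|\le|\Sigma|<q$. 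Combining these, and using $\chi_{_I}(\theta^d)=\prod_{i\in I}t_i^d$, I would obtain
\[
F_{\Sigma}(x)=-\frac{1}{\ell_d E_d(y)}\prod_{i\in\Sigma}\bigl(t_i^d+P_d(t_i,y)\bigr).
\]

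Next I would simplify the two ingredients. Since $E_d(\theta^d)=1$ (Proposition \ref{prop: Gos}(iii)) and $E_d$ is $\mathbf{F}_q$-linear, $E_d(y)=E_d(x)-1$, whence by \eqref{eq: Carlitz generating function} one gets $1/(\ell_d E_d(y))=-\sum_{m\ge 0}S_d(m+1)x^m$. For the Perkins factors, writing $P_d(t_i,y)=\sum_{k=0}^{d-1}Q_{d,k}(t_i)\bigl(x^{q^k}-\theta^{dq^k}\bigr)$ and collecting the $x$-free part gives constant term $t_i^d-P_d(t_i,\theta^d)=b_d(t_i)$ by \eqref{eq: values of  P_d(t,x)}; hence $t_i^d+P_d(t_i,y)=b_d(t_i)+\sum_{k=0}^{d-1}Q_{d,k}(t_i)x^{q^k}$. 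Substituting both,
\[
F_{\Sigma}(x)=\Bigl(\sum_{m\ge 0}S_d(m+1)x^m\Bigr)\prod_{i\in\Sigma}\Bigl(b_d(t_i)+\sum_{k=0}^{d-1}Q_{d,k}(t_i)x^{q^k}\Bigr).
\]
Expanding the product by choosing, for each $i\in\Sigma$, either $b_d(t_i)$ (placing $i$ in $J$) or a summand $Q_{d,k_i}(t_i)x^{q^{k_i}}$ (placing $i$ in $I$), then extracting the coefficient of $x^n$ and isolating the term $I=\emptyset$ (which produces $S_d(n+1)b_d(\Sigma)$), yields exactly the claimed formula; the constraint $n-\sum_{i\in I}q^{k_i}+1>0$ is precisely the requirement that the surviving Carlitz exponent $m=n-\sum_{i\in I}q^{k_i}$ be nonnegative.

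I expect the main obstacle to be the closed-form identity $t_i^d+P_d(t_i,x-\theta^d)=b_d(t_i)+\sum_{k}Q_{d,k}(t_i)x^{q^k}$: one must expand $(x-\theta^d)^{q^k}=x^{q^k}-\theta^{dq^k}$ under Frobenius, recognize $\sum_k Q_{d,k}(t_i)\theta^{dq^k}=P_d(t_i,\theta^d)$, and invoke \eqref{eq: values of  P_d(t,x)} to cancel the $t_i^d$ contributions. Keeping the signs straight through the two appearances of $-1$ (from $a-x=b-y$ and from $E_d(y)=-(1-E_d(x))$) is the only delicate bookkeeping; everything else is a formal manipulation of rational functions over the Tate algebra, and matching coefficients of $x^n$ is legitimate because $F_{\Sigma}(x)$ is an honest rational function.
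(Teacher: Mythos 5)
Your proof is correct and takes essentially the same approach as the paper's: both expand $\sum_{a\in A^{+}(d)}\chi_{_\Sigma}(a)/(a-x)$ as a generating function in $x$, split $a=\theta^{d}+b$ via Proposition \ref{prop: character}(i), apply Perkins's formula (Proposition \ref{prop: Perkins's formula}) together with the $\mathbf{F}_q$-linearity of $E_d$ and $P_d$, and identify the result as the Carlitz generating function \eqref{eq: Carlitz generating function} times the factors $b_d(t_i)+\sum_{k}Q_{d,k}(t_i)x^{q^k}$ before matching coefficients of $x^n$. The only difference is cosmetic sign bookkeeping (your substitution $y=x-\theta^{d}$ with two cancelling minus signs versus the paper's direct substitution into Perkins's formula), so the two arguments coincide.
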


 \begin{proof}
    The case $d = 0$ is trivial. For the case $d > 0$, we claim that 
     \begin{equation} \label{eq: explicit formula 1}
         \sum \limits_{a \in A^+(d)} \dfrac{\chi_{_\Sigma}(a)}{a-x} = \sum \limits_{n\geq 0} S_d\begin{pmatrix}
    \Sigma \\ n+1
    \end{pmatrix} x^n.
     \end{equation}
     Indeed, expanding the geometric series, we have 
     \begin{align*}
    \sum \limits_{a \in A^+(d)} \dfrac{\chi_{_\Sigma}(a)}{a-x}  =  \sum \limits_{a \in A^+(d)} \dfrac{\chi_{_\Sigma}(a)}{a} \sum \limits_{n \geq 0} \left(\dfrac{x}{a}\right)^n = \sum \limits_{n \geq 0} \left( \sum \limits_{a \in A^+(d)} \dfrac{\chi_{_\Sigma}(a)}{a^{n+1}} \right) x^n = \sum \limits_{n\geq 0} S_d\begin{pmatrix}
    \Sigma \\ n+1
    \end{pmatrix} x^n.
\end{align*}
On the other hand, it follows from Proposition \ref{prop: character}(i) that
\begin{align*}
    \sum \limits_{a \in A^+(d)} \dfrac{\chi_{_\Sigma}(a)}{a-x}  =   \sum \limits_{a \in A_<(d)} \dfrac{\chi_{_\Sigma}(\theta^d + a)}{\theta^d + a - x}
    &= \sum \limits_{a \in A_<(d)} \dfrac{\sum\limits_{I \sqcup J = \Sigma}\chi_{_I}(\theta^d)\chi_{_J}(a)}{\theta^d + a - x} \\
    &= \sum\limits_{I \sqcup J = \Sigma} \chi_{_I}(\theta^d) \sum \limits_{a \in A_<(d)}  \dfrac{\chi_{_J}(a)}{\theta^d + a - x}.
\end{align*}
Replacing $x$ by $x - \theta^d$ in Proposition \ref{prop: Perkins's formula} and using Proposition \ref{prop: Gos}, one deduces that
\begin{equation*}
     \sum \limits_{a \in A_<(d)}  \dfrac{\chi_{_J}(a)}{\theta^d + a - x} = \dfrac{1}{\ell_d (1 - E_d(x))} \prod \limits_{j \in J} P_d(t_j,x - \theta^d).
\end{equation*}
Thus we have
\begin{align} \label{eq: explicit formula 2}
    \sum \limits_{a \in A^+(d)} \dfrac{\chi_{_\Sigma}(a)}{a-x}  &= \dfrac{1}{\ell_d (1 - E_d(x))} \sum\limits_{I \sqcup J = \Sigma} \chi_{_I}(\theta^d) \prod \limits_{j \in J} P_d(t_j,x - \theta^d)\\ \notag
    &= \dfrac{1}{\ell_d (1 - E_d(x))}  \prod \limits_{i \in \Sigma} (\chi_{t_i}(\theta^d) + P_d(t_i,x - \theta^d)) \\ \notag
    &= \dfrac{1}{\ell_d (1 - E_d(x))}  \prod \limits_{i \in \Sigma} (P_d(t_i,x) + b_d(t_i)).
\end{align}
The last equality follows from \eqref{eq: values of  P_d(t,x)}. Combining \eqref{eq: explicit formula 1}, \eqref{eq: explicit formula 2}, and \eqref{eq: Carlitz generating function}, we obtain
\begin{equation*} 
    \sum \limits_{n\geq 0} S_d\begin{pmatrix}
    \Sigma \\ n+1
    \end{pmatrix} x^n = \left( \sum \limits_{n\geq 0} S_d(n+1) x^n \right) \prod_{i \in \Sigma}\left(\sum \limits_{k = 0}^{d-1}Q_{d,k}(t_i)x^{q^k} + b_d(t_i) \right).
\end{equation*}
The result then follows from equating the coefficient of $x^n$ on both sides of the above identity.
 \end{proof}

%%%%%%%%%%%%%%%%%%%%%%%%%%%%%%

\subsection{Verification and comparison} \label{subsec: verification and comparison}
${}$
\par In this subsection, we verify and compare two formulas for power sums of Pellarin's MZVs: one originally proposed by Gezmis and Pellarin in \cite[Equation (22)]{GP21}, and our formula given in Proposition \ref{prop: explicit formula}. We will show that Proposition \ref{prop: explicit formula} provides the correct result. 

For the convenience of the reader, we recall the formula of Gezmis and Pellarin as follows. Let $\Sigma$ be a finite subset of $\mathbf{N}^*$ such that $|\Sigma| < q$, and let $n$ be a natural number. For all $d \in \mathbf{N}$, we have
\begin{align*} 
     S_d \begin{pmatrix}
     \Sigma \\ n+1
     \end{pmatrix} &= S_d(n+1)b_d(\Sigma)  + \sum \limits_{I \subsetneq \Sigma } (-1)^{|I|}  \sum \limits_{\substack{k_i \in \mathbf{N}, \ i \in I \\ n -  {\textstyle\sum_{i\in I}} q^{k_i} + 1 > 0}} S_d(n -  \textstyle\sum_{i\in I} q^{k_i} + 1) \prod \limits_{i \in I} Q_{d,k_i}(t_i).
 \end{align*}
We note that the polynomial $Q_{d,k}(t)$ used in our work is denoted by $H^{(d)}_{k}(t)$ in \cite{GP21}.

We consider the case $\Sigma = \{1,2\}, n = 1$ and $q = 3$, so that $|\Sigma| = 2 < 3$. From the formula of Gezmis and Pellarin, we have
\begin{align} \label{eq: result 1}
    S_d\begin{pmatrix}
\{1,2\} \\ 2
\end{pmatrix} &=  S_d(2)b_d(t_1)b_d(t_2) - S_d(1)Q_{d,0}(t_1) - S_d(1)Q_{d,0}(t_2) \\
&= \frac{b_d(t_1)b_d(t_2)}{\ell_d^2} - \frac{b_d(t_1)}{\ell_d \ell_{d-1}(t_1 - \theta)} - \frac{b_d(t_2)}{\ell_d \ell_{d-1}(t_2 - \theta)}. \notag
\end{align}
The last equality follows from \eqref{eq: values of  Q_{d,0}(t)} and the fact that $S_d(s) = 1/l_d^s$ for all positive integers $s \leq q$ (see \cite[Section 3.3]{Tha09}). From Proposition \ref{prop: explicit formula} and the above arguments, we have
\begin{align} \label{eq: result 2}
    S_d\begin{pmatrix}
\{1,2\} \\ 2
\end{pmatrix} &=  S_d(2)b_d(t_1)b_d(t_2) + S_d(1)Q_{d,0}(t_1)b_d(t_2) + S_d(1)Q_{d,0}(t_2)b_d(t_1) \\
&= \frac{b_d(t_1)b_d(t_2)}{\ell_d^2} + \frac{b_d(t_1)b_d(t_2)}{\ell_d \ell_{d-1}(t_1 - \theta)} + \frac{b_d(t_1)b_d(t_2)}{\ell_d \ell_{d-1}(t_2 - \theta)}. \notag
\end{align}

For the verification of the results \eqref{eq: result 1} and \eqref{eq: result 2}, we now give an alternative method to compute $S_d\begin{pmatrix}
\{1,2\} \\ 2
\end{pmatrix}$. From Proposition \ref{prop: sum-shuffle formula} (see also \cite[Theorem 3.1]{Pel17}), we have
\begin{equation*} 
     S_d\begin{pmatrix}
\{1,2\} \\ 2
\end{pmatrix} = S_d\begin{pmatrix}
\{1\} \\ 1
\end{pmatrix}S_d\begin{pmatrix}
\{2\} \\ 1
\end{pmatrix} + S_d\begin{pmatrix}
\{2\} & \{1\} \\ 1 & 1
\end{pmatrix} + S_d\begin{pmatrix}
\{1\} & \{2\} \\ 1 & 1
\end{pmatrix}.
\end{equation*}
Based on a result due to Pellarin (see \cite[Lemma 5.1]{Pel17}), one deduces that $S_d\begin{pmatrix}
\{1\} \\ 1
\end{pmatrix} = \dfrac{b_d(t_1)}{l_d}$ and $S_d\begin{pmatrix}
\{2\} \\ 1
\end{pmatrix} = \dfrac{b_d(t_2)}{l_d}$.
Moreover, it follows from \eqref{eq: power sum formula 3} and \cite[Lemma 8]{Pel16} that
\begin{align*}
    S_d\begin{pmatrix}
\{2\} & \{1\} \\ 1 & 1
\end{pmatrix} &= S_{d}\begin{pmatrix}
\{2\} \\ 1
\end{pmatrix}S_{<d}\begin{pmatrix}
\{1\} \\ 1
\end{pmatrix} = \frac{b_d(t_2)b_d(t_1)}{\ell_{d}\ell_{d-1}(t_1-\theta)},\\
S_d\begin{pmatrix}
\{1\} & \{2\} \\ 1 & 1
\end{pmatrix} &= S_{d}\begin{pmatrix}
\{1\} \\ 1
\end{pmatrix}S_{<d}\begin{pmatrix}
\{2\} \\ 1
\end{pmatrix} = \frac{b_d(t_1)b_d(t_2)}{\ell_{d}\ell_{d-1}(t_2-\theta)}.
\end{align*}
This proves that
\begin{align*}
      S_d\begin{pmatrix}
\{1,2\} \\ 2
\end{pmatrix} = \frac{b_d(t_1)b_d(t_2)}{\ell_d^2} + \frac{b_d(t_1)b_d(t_2)}{\ell_d \ell_{d-1}(t_1 - \theta)} + \frac{b_d(t_1)b_d(t_2)}{\ell_d \ell_{d-1}(t_2 - \theta)},
\end{align*}
which differs from \eqref{eq: result 1} and leads to the same result as in \eqref{eq: result 2}.

\subsection{Dagger multiple zeta values} 
${}$
\par In this subsection, we will reprove a result due to Gezmis and Pellarin \cite[Corollary 5.4]{GP21} using Proposition \ref{prop: explicit formula}. To do so, we first review some notions introduced by Gezmis and Pellarin.

% Let $\Sigma$ be a finite subset of $\mathbf{N}^*$ such that $|\Sigma| < q$, and let $s$ be a positive integer. For $d \in \mathbf{N}$, we define
% \begin{align*}
%     S_d^{\dagger}\begin{pmatrix}
% \Sigma\\
% s
% \end{pmatrix}  = S_d(s)b_d(\Sigma).
% \end{align*}
Let $\Sigma$ be a finite subset of $\mathbf{N}^*$ such that $|\Sigma| < q$. We denote by $\mathbf{E}_{\Sigma}$ the $\mathbf{C}_{\infty}$-subalgebra of $\mathbf{T}_{\Sigma}(\mathbf{C}_{\infty})$ consisting of all entire functions in variables $t_i$ with $i \in \Sigma$. For any admissible array $\mathtt{A} =  \begin{pmatrix}
\Sigma_1 & \dotsb & \Sigma_r\\
s_1 & \dotsb & s_r
\end{pmatrix}$ of type $\Sigma$, we define
\begin{align*}
 \zeta_A^{\dagger}(\mathtt{A}) = \zeta_A^{\dagger}\begin{pmatrix}
\Sigma_1 & \dotsb & \Sigma_r\\
s_1 & \dotsb & s_r
\end{pmatrix} = \sum\limits_{d_1 > \cdots > d_r \geq 0} S_{d_1}(s_1)b_{d_1}(\Sigma_1) \cdots S_{d_r}(s_r)b_{d_r}(\Sigma_r).
\end{align*}
One verifies at once that the above series converges to an entire function in $\mathbf{E}_{\Sigma}$. It should be remarked that this series does not converge when $|\Sigma| \geq  q$. For $d \in \mathbf{N}$, we define
\begin{align*}
    S_d^{\dagger}(\mathtt{A}) &= \sum\limits_{d = d_1 > \cdots > d_r \geq 0} S_{d_1}(s_1)b_{d_1}(\Sigma_1) \cdots S_{d_r}(s_r)b_{d_r}(\Sigma_r),\\
 S_{<d}^{\dagger}(\mathtt{A}) &= \sum\limits_{d > d_1 > \cdots > d_r \geq 0} S_{d_1}(s_1)b_{d_1}(\Sigma_1) \cdots S_{d_r}(s_r)b_{d_r}(\Sigma_r).
\end{align*}

Using similar arguments as for the power sums of Pellarin's MZVs, one may verify that $S_d^{\dagger}$ and $S_{<d}^{\dagger}$ satisfy the properties outlined as in Proposition \ref{prop: shuffle relations MZV power sum}. In particular, we obtain the following result:

\begin{proposition} \label{prop: shuffle relations DMZV power sum}
Let $\mathtt{A}, \mathtt{B}$ be two admissible arrays which are compositions of $\begin{pmatrix}
 \Sigma_1  \\
w_1  \end{pmatrix}, \begin{pmatrix}
 \Sigma_2  \\
w_2  \end{pmatrix}$, respectively, such that $|\Sigma_1 \sqcup \Sigma_2| < q$. There exist elements $\alpha_i \in \mathbf{F}_q$ and admissible arrays $\mathtt{C}_i$, which are compositions of $\begin{pmatrix}
 \Sigma_1 \sqcup \Sigma_2  \\
w_1 + w_2  \end{pmatrix}$, such that for all $d \in \mathbf{N}$,
\begin{equation*}
        S_d^{\dagger}(\mathtt{A}) S_{<d}^{\dagger}(\mathtt{B}) = \sum \limits_i \alpha_i S_d^{\dagger}(\mathtt{C}_i).
    \end{equation*}
    Here the elements $\alpha_i$ are independent of the degree $d$.
\end{proposition}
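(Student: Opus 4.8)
The plan is to follow the proof of Proposition~\ref{prop: shuffle relations MZV power sum} line by line, establishing the dagger analogues of its three parts by induction on the total weight $w_1+w_2$, and then reading off the assertion as the dagger analogue of part (iii). Two structural identities for the dagger power sums make this possible. The first is the dagger head--tail factorization: directly from the definition of $S_d^{\dagger}$, for an admissible array $\mathtt{A}=\begin{pmatrix}\Sigma_1 & \dotsb & \Sigma_r\\ s_1 & \dotsb & s_r\end{pmatrix}$ one has
\[
S_d^{\dagger}(\mathtt{A}) = S_d^{\dagger}\begin{pmatrix}\Sigma_1\\ s_1\end{pmatrix}\, S_{<d}^{\dagger}\begin{pmatrix}\Sigma_2 & \dotsb & \Sigma_r\\ s_2 & \dotsb & s_r\end{pmatrix},
\]
which is the exact dagger counterpart of \eqref{eq: power sum formula 3}.

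The second identity is a depth-one dagger sum-shuffle, and here the dagger setting is markedly simpler than the Pellarin one. Since $b_d(\Sigma)=\prod_{i\in\Sigma}b_d(t_i)$ does not depend on the summation variable, a depth-one dagger power sum is a scalar multiple of a classical Carlitz power sum, namely $S_d^{\dagger}\begin{pmatrix}\Sigma\\ s\end{pmatrix}=b_d(\Sigma)\,S_d(s)$. Combining this with the multiplicativity $b_d(\Sigma)\,b_d(\Gamma)=b_d(\Sigma\sqcup\Gamma)$ and the classical depth-one sum-shuffle (the case $\Sigma=\Gamma=\emptyset$ of Proposition~\ref{prop: sum-shuffle formula}), I obtain
\[
S_d^{\dagger}\begin{pmatrix}\Sigma\\ s\end{pmatrix} S_d^{\dagger}\begin{pmatrix}\Gamma\\ t\end{pmatrix} = S_d^{\dagger}\begin{pmatrix}\Sigma\sqcup\Gamma\\ s+t\end{pmatrix} + \sum_{i+j=s+t}\left(\Delta^{\emptyset,j}_{\emptyset,t}+\Delta^{\emptyset,j}_{\emptyset,s}\right) S_d^{\dagger}\begin{pmatrix}\Sigma\sqcup\Gamma & \emptyset\\ i & j\end{pmatrix},
\]
where I have used $b_d(\Sigma\sqcup\Gamma)\,S_d(s+t)=S_d^{\dagger}\begin{pmatrix}\Sigma\sqcup\Gamma\\ s+t\end{pmatrix}$ and $b_d(\Sigma\sqcup\Gamma)\,S_d\begin{pmatrix}\emptyset & \emptyset\\ i & j\end{pmatrix}=S_d^{\dagger}\begin{pmatrix}\Sigma\sqcup\Gamma & \emptyset\\ i & j\end{pmatrix}$ (recall $S_d\begin{pmatrix}\emptyset & \emptyset\\ i & j\end{pmatrix}=S_d(i,j)$ by \eqref{eq: power sum formula 1}). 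The output arrays are compositions of $\begin{pmatrix}\Sigma\sqcup\Gamma\\ s+t\end{pmatrix}$, and the coefficients lie in $\mathbf{F}_q$ and are independent of $d$.

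With these two facts in hand, the induction of \cite[Proposition~2.1]{ND21} transfers verbatim upon replacing $S_d,S_{<d}$ by $S_d^{\dagger},S_{<d}^{\dagger}$. To expand $S_d^{\dagger}(\mathtt{A})\,S_d^{\dagger}(\mathtt{B})$ I factor each factor into its degree-$d$ head and strictly-lower tail, regroup the two heads and the two tails, apply the depth-one dagger sum-shuffle to the heads and the inductive hypothesis (the dagger analogue of part (ii), of strictly smaller weight) to the tails, and recombine using the dagger head--tail factorization; the same scheme yields the dagger analogue of part (ii) for the products $S_{<d}^{\dagger}(\mathtt{A})\,S_{<d}^{\dagger}(\mathtt{B})$. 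The assertion of the proposition is then the dagger analogue of part (iii): writing $S_d^{\dagger}(\mathtt{A})=S_d^{\dagger}\begin{pmatrix}\Sigma_1\\ s_1\end{pmatrix} S_{<d}^{\dagger}(\mathtt{A}')$ with $\mathtt{A}'$ the tail of $\mathtt{A}$, applying the dagger part (ii) to $S_{<d}^{\dagger}(\mathtt{A}')\,S_{<d}^{\dagger}(\mathtt{B})$, and recombining, one obtains $S_d^{\dagger}(\mathtt{A})\,S_{<d}^{\dagger}(\mathtt{B})=\sum_i\alpha_i S_d^{\dagger}(\mathtt{C}_i)$ with $\alpha_i\in\mathbf{F}_q$ independent of $d$.

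The only genuinely new point relative to the classical argument is the bookkeeping of the scalar decorations $b_{d_k}(\Sigma_k)$ under the repeated head--tail splittings and regroupings: one must check that each factor $b_{d_k}$ remains attached to its own degree level and that two such factors coalesce, via $b_e(\Sigma)b_e(\Gamma)=b_e(\Sigma\sqcup\Gamma)$, precisely when the corresponding levels are forced to a common degree by the sum-shuffle, so that every term that appears is of the form $\alpha_i S_d^{\dagger}(\mathtt{C}_i)$ with $\mathtt{C}_i$ a composition of $\begin{pmatrix}\Sigma_1\sqcup\Sigma_2\\ w_1+w_2\end{pmatrix}$. This is exactly where the scalar factorization $S_d^{\dagger}\begin{pmatrix}\Sigma\\ s\end{pmatrix}=b_d(\Sigma)S_d(s)$ does the work, since it reduces all of the merging combinatorics to the already established classical case and keeps the coefficients in $\mathbf{F}_q$. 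I expect this bookkeeping, rather than any analytic difficulty, to be the main thing to get right; the hypothesis $|\Sigma_1\sqcup\Sigma_2|<q$ plays no role in the formal power-sum identity itself but guarantees that all the dagger power sums involved remain in the convergent regime, so that summing over $d$ yields the corresponding relation among the $\zeta_A^{\dagger}$.
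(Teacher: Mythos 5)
Your proposal is correct and takes essentially the same approach as the paper, whose proof of this proposition is just the remark that the arguments for Pellarin power sums (the induction of Proposition \ref{prop: shuffle relations MZV power sum}, following \cite{ND21}, together with the head--tail factorization \eqref{eq: power sum formula 3}) carry over to the dagger setting. The one ingredient you make explicit that the paper leaves implicit, namely that $S_d^{\dagger}\begin{pmatrix}\Sigma\\ s\end{pmatrix}=b_d(\Sigma)\,S_d(s)$ reduces the depth-one dagger sum-shuffle to the classical ($\Sigma=\Gamma=\emptyset$) case of Proposition \ref{prop: sum-shuffle formula}, with all type decoration attached to the degree-$d$ slot, is verified correctly.
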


We denote by $\mathcal{Z}^{\dagger}_{w, \Sigma}$ the $\mathbf{F}_q$-vector subspace of $\mathbf{E}_{\Sigma}$ generated by $\zeta_A^{\dagger}(\mathtt{A})$, where $\mathtt{A}$ ranges over all admissible arrays of weight $w$ and type $\Sigma$. We let $\mathcal{Z}_{w, \Sigma}$ denote the $\mathbf{F}_q$-vector subspace of $\mathbf{E}_{\Sigma}$ generated by Pellarin's MZVs of weight $w$ and type $\Sigma$. We are now ready to reprove \cite[Corollary 5.4]{GP21} stated as follows:

\begin{proposition}  \label{prop: Z = Z+}
    Let $\Sigma$ be a finite subset of $\mathbf{N}^*$ such that $|\Sigma| < q$. For all positive integers $w$, we have $\mathcal{Z}_{w, \Sigma} = \mathcal{Z}^{\dagger}_{w, \Sigma}$.
\end{proposition}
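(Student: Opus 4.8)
The plan is to deduce the equality of the two $\mathbf{F}_q$-vector spaces from a single statement at the level of power sums: for every admissible array $\mathtt{A}$ of weight $w$ and type $\Sigma$ and every $d \in \mathbf{N}$, the power sum $S_d(\mathtt{A})$ is an $\mathbf{F}_q$-linear combination, with coefficients \emph{independent of} $d$, of the dagger power sums $S_d^{\dagger}(\mathtt{C})$ with $\mathtt{C}$ ranging over admissible arrays of weight $w$ and type $\Sigma$, and conversely each $S_d^{\dagger}(\mathtt{A})$ is such a combination of the $S_d(\mathtt{C})$. Granting this, summing over $d \geq 0$ and using $\zeta_A(\mathtt{A}) = \sum_d S_d(\mathtt{A})$ together with its evident dagger analogue $\zeta_A^{\dagger}(\mathtt{A}) = \sum_d S_d^{\dagger}(\mathtt{A})$ — the $d$-independence of the coefficients being exactly what permits interchanging the two summations inside $\mathbf{E}_{\Sigma}$ — yields at once both inclusions $\mathcal{Z}_{w, \Sigma} \subseteq \mathcal{Z}^{\dagger}_{w, \Sigma}$ and $\mathcal{Z}^{\dagger}_{w, \Sigma} \subseteq \mathcal{Z}_{w, \Sigma}$, hence the proposition.

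To prove the power-sum statement I would induct on the depth, peeling off the first column by means of \eqref{eq: power sum formula 3}, which writes $S_d(\mathtt{A})$ as a product of a depth-one power sum $S_d\binom{\Sigma_1}{s_1}$ with an $S_{<d}$-power sum of smaller depth. The depth-one factor is governed by Proposition \ref{prop: explicit formula}, which expresses $S_d\binom{\Sigma_1}{s_1}$ as the dagger power sum $S_d(s_1)b_d(\Sigma_1) = S_d^{\dagger}\binom{\Sigma_1}{s_1}$ plus correction terms of the shape $S_d(m)\prod_{i \in I} Q_{d,k_i}(t_i)\,b_d(J)$ with strictly smaller Carlitz-weight $m < s_1$. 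The inductive hypothesis disposes of the remaining $S_{<d}$-factor, and the products are reassembled using the quasi-shuffle relations for dagger power sums (Proposition \ref{prop: shuffle relations DMZV power sum} together with the dagger analogue of Proposition \ref{prop: shuffle relations MZV power sum}) on one side and Proposition \ref{prop: shuffle relations MZV power sum} on the other. Since the expansion in Proposition \ref{prop: explicit formula} is triangular with leading term precisely the corresponding dagger power sum, the resulting change of basis is invertible and both directions follow simultaneously.

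The main obstacle is to show that the correction terms $S_d(m)\prod_{i \in I} Q_{d,k_i}(t_i)\,b_d(J)$ are genuinely $\mathbf{F}_q$-combinations, with $d$-independent coefficients, of dagger power sums of type $\Sigma$ and weight $w$. The factor $S_d(m)\,b_d(J)$ is itself the depth-one dagger power sum $S_d^{\dagger}\binom{J}{m}$, so the difficulty is concentrated in the coefficients $Q_{d,k}(t)$, which are a priori neither Pellarin nor dagger power sums. The key auxiliary lemma I would establish is that each $Q_{d,k}(t_i)$ is a $d$-independent $\mathbf{F}_q$-combination of $S_{<d}^{\dagger}(\mathtt{C})$ with $\mathtt{C}$ of type $\{i\}$ and weight $q^k$: for $k=0$ this is exactly \eqref{eq: values of  Q_{d,0}(t)}, which identifies $Q_{d,0}(t_i)$ with $S_{<d}^{\dagger}\binom{\{i\}}{1}$, and the general case is obtained by applying Proposition \ref{prop: explicit formula} to the singleton type $\{i\}$ and solving the resulting relations, which are triangular in the weight, feeding in the lower-weight $Q_{d,k'}(t_i)$ and the singleton Pellarin power sums $S_d\binom{\{i\}}{\cdot}$ that are dagger-expressible by the main induction. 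Because the $Q_{d,k_i}$ occurring in the weight-$w$ correction terms all have weight $q^{k_i} \leq w-1 < w$, a combined induction on weight closes the loop; granting the lemma, $\prod_{i \in I} Q_{d,k_i}(t_i)$ becomes a combination of products of $S_{<d}^{\dagger}$-power sums of pairwise disjoint singleton types, which the dagger quasi-shuffle converts into $S_{<d}^{\dagger}$-power sums of type $I$, and multiplying by $S_d^{\dagger}\binom{J}{m}$ and applying Proposition \ref{prop: shuffle relations DMZV power sum} once more produces the desired expression. The delicate point throughout is the bookkeeping that keeps every term in a single weight and certifies the $d$-independence of all coefficients as these manipulations are iterated.
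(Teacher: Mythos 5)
Your overall architecture is the same as the paper's: prove, by a combined induction on weight and depth, that each $S_d(\mathtt{A})$ is an $\mathbf{F}_q$-combination of dagger power sums $S_d^{\dagger}(\mathtt{C})$ of the same weight and type with $d$-independent coefficients, and conversely (this is exactly Proposition \ref{prop: Z=Z+} in the paper), and then let $d$ tend to infinity; the induction is run as you describe via \eqref{eq: power sum formula 3}, Proposition \ref{prop: explicit formula}, and the quasi-shuffle statements (Propositions \ref{prop: shuffle relations MZV power sum} and \ref{prop: shuffle relations DMZV power sum}). The gap is in your key auxiliary lemma about $Q_{d,k}$. The statement of that lemma is true (and your $k=0$ case is fine), but your proposed derivation of it for $k\geq 1$ fails. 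Applying Proposition \ref{prop: explicit formula} to the singleton type $\{i\}$ at weight $w'$ gives
\begin{equation*}
S_d\begin{pmatrix}\{i\}\\ w'\end{pmatrix} = S_d(w')\,b_d(t_i) + \sum_{j:\, q^j<w'} S_d(w'-q^j)\,Q_{d,j}(t_i),
\end{equation*}
so in every relation this family produces, $Q_{d,k}(t_i)$ occurs only inside the product $S_d(w'-q^k)\,Q_{d,k}(t_i)$ with $w'-q^k\geq 1$; it never occurs alone. (Taking $w'=q^k$ removes the $j=k$ term altogether, and $w'=q^k+1$ yields the term $S_d(1)\,Q_{d,k}(t_i)$.) The system is indeed ``triangular'', but its diagonal entries are the Carlitz power sums $S_d(m)$, $m\geq 1$, not nonzero scalars: solving it would require dividing by $S_d(1)=1/\ell_d$, i.e.\ multiplying by $\ell_d$, which is neither an $\mathbf{F}_q$-linear operation with $d$-independent coefficients nor weight-preserving ($S_d(1)Q_{d,k}(t_i)$ sits in weight $q^k+1$, while the expression you need for $Q_{d,k}(t_i)$ must sit in weight $q^k$). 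Consequently no $\mathbf{F}_q$-linear manipulation of these relations, with coefficients independent of $d$, can isolate $Q_{d,k}(t_i)$.

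This is precisely the point where the paper does not argue from Proposition \ref{prop: explicit formula} at all, but imports the non-trivial Lemma \ref{lem: Q = S<d} (Gezmis--Pellarin, \cite[Lemma 5.5]{GP21}), which asserts directly that the whole product $\prod_{i\in I}Q_{d,k_i}(t_i)$ is a $d$-independent $\mathbf{F}_q$-combination of (Pellarin, non-dagger) truncated sums $S_{<d}(\mathtt{C}_j)$ with $\mathtt{C}_j$ compositions of $\begin{pmatrix}I\\ \sum_{i\in I}q^{k_i}\end{pmatrix}$; the induction hypothesis in weight $\sum_{i\in I}q^{k_i}<w$ then converts these into $S_{<d}^{\dagger}$-sums, and Proposition \ref{prop: shuffle relations DMZV power sum} finishes exactly as you say. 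Your argument becomes correct once the ``solve the triangular system'' step is replaced by an appeal to Lemma \ref{lem: Q = S<d}; if instead you insist on proving that lemma yourself, you need genuinely new input on the polynomials $Q_{d,k}$ (for instance a twisting identity relating $Q_{d,k}$ to truncated weight-$q^k$ power sums), since it is not a formal consequence of the relations you allow yourself to use.
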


The following lemma due to Gezmis and Pellarin will be useful. For the proof, we refer the reader to \cite[Lemma 5.5]{GP21}.

\begin{lemma} \label{lem: Q = S<d}
Let $I$ be a finite subset of $\mathbf{N}^*$ such that $|I| < q$. For all $ d\in \mathbf{N}$, we can write 
\begin{equation*}
    \prod \limits_{i \in I} Q_{d,k_i}(t_i) = \sum\limits_j \alpha_j S_{<d}(\mathtt{C}_j),
\end{equation*}
where $\alpha_j$ are elements in $\mathbf{F}_q$, which are independent of the degree $d$, and $\mathtt{C}_j$ are admissible arrays, which are compositions of $\begin{pmatrix}
I\\
\textstyle\sum_{i\in I} q^{k_i}
\end{pmatrix}$.
\end{lemma}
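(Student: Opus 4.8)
The plan is to establish the single-variable case $|I|=1$ first and then recover the general statement from it by the sum-shuffle machinery of Section~\ref{sec: pellarin's multiple zeta values}. \emph{From one variable to many:} granting that, for each $i$ and each $k$, the single factor $Q_{d,k}(t_i)$ is an $\mathbf{F}_q$-linear combination, with coefficients independent of $d$, of power sums $S_{<d}(\mathtt{C})$ with $\mathtt{C}$ a composition of $\begin{pmatrix}\{i\}\\q^k\end{pmatrix}$, I would expand $\prod_{i\in I}Q_{d,k_i}(t_i)$ and reduce to a single product $\prod_{i\in I}S_{<d}(\mathtt{C}_i)$, where $\mathtt{C}_i$ has type $\{i\}$ and weight $q^{k_i}$. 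Multiplying the factors one at a time and applying Proposition~\ref{prop: shuffle relations MZV power sum}(ii) at each step turns such a product into an $\mathbf{F}_q$-linear combination, with $d$-independent coefficients, of power sums $S_{<d}(\mathtt{C})$ of type $\bigsqcup_{i\in I}\{i\}$ and weight $\sum_{i\in I}q^{k_i}$. Since the indices in $I$ are distinct, this disjoint union is the genuine subset $I$ (every multiplicity equal to $1$) of weight $\sum_{i\in I}q^{k_i}$, and the hypothesis $|I|<q$ keeps us within the class of types under consideration. Thus the lemma follows once the one-variable assertion is in hand.

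\emph{The one-variable assertion.} Here the engine is the series $P_d(t_i,x)=\sum_{k\ge0}Q_{d,k}(t_i)x^{q^k}$ together with Carlitz's expansion \eqref{eq: ed-formula} and the generating function \eqref{eq: Carlitz generating function}. Since $Q_{d,k}(t_i)=\sum_{m=k}^{d-1}\frac{b_m(t_i)}{D_k\ell_{m-k}^{q^k}}$, and since Pellarin's formula gives $\frac{b_m(t_i)}{\ell_m}=S_m\begin{pmatrix}\{i\}\\1\end{pmatrix}$, I would write each summand as
\begin{equation*}
\frac{b_m(t_i)}{D_k\ell_{m-k}^{q^k}}=S_m\begin{pmatrix}\{i\}\\1\end{pmatrix}\cdot\frac{\ell_m}{D_k\ell_{m-k}^{q^k}},
\end{equation*}
where the scalar $\frac{\ell_m}{D_k\ell_{m-k}^{q^k}}$ is exactly $\ell_m$ times the coefficient of $x^{q^k}$ in $E_m(x)$. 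The crux is then a purely classical (type $\emptyset$) identity: this scalar is an $\mathbf{F}_q$-linear combination, with coefficients independent of $m$, of power sums $S_{<m}(\mathtt{B})$ of weight $q^k-1$. Granting it, formula \eqref{eq: power sum formula 3} reattaches the weight-one factor $S_m\begin{pmatrix}\{i\}\\1\end{pmatrix}$ on the left of each $\mathtt{B}$, producing power sums $S_m$ of type $\{i\}$ and weight $q^k$; summing over $m<d$ then gives $Q_{d,k}(t_i)$ as the required combination of $S_{<d}$. As sanity checks, for $k=0$ one recovers $Q_{d,0}(t_i)=S_{<d}\begin{pmatrix}\{i\}\\1\end{pmatrix}$ directly, and for $k=1$ the formula $S_m(q-1)=\ell_m^{-(q-1)}$ yields the clean answer $Q_{d,1}(t_i)=-S_{<d}\begin{pmatrix}\{i\}&\emptyset\\1&q-1\end{pmatrix}$.

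\emph{The main obstacle} is concentrated in the classical scalar identity for $\frac{\ell_m}{D_k\ell_{m-k}^{q^k}}=\ell_m[x^{q^k}]E_m(x)$. I expect to prove it by inverting the Carlitz generating function \eqref{eq: Carlitz generating function}, reading off the coefficient of $x^{q^k}$ as a polynomial in the classical power sums $S_m(\cdot)$, and then linearizing the resulting equal-degree products through the depth-one sum-shuffle of Proposition~\ref{prop: sum-shuffle formula}, with an induction on $k$ organizing the recursion. Two points demand care: the weight bookkeeping, namely that every monomial produced has total weight exactly $q^k-1$ (so that, after reattaching the weight-one factor, the total weight is $q^k$); and the degree range, namely that the per-degree identities, which a priori hold only for $m\ge k$, assemble into honest $S_{<d}$-power sums with $d$-independent coefficients, which forces the low-degree terms of the constructed combination to cancel. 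This last matching is already delicate for $k\ge2$, where the naive guess $-S_{<d}\begin{pmatrix}\{i\}&\emptyset\\1&q^k-1\end{pmatrix}$ fails and genuine higher-depth corrections must be included.
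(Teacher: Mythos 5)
Your overall architecture is sound, and indeed it matches the only proof on record: the paper itself does not prove this lemma but defers to \cite[Lemma 5.5]{GP21}, whose argument has the same skeleton you describe. The reduction from many variables to one via Proposition \ref{prop: shuffle relations MZV power sum}(ii) is correct (the types $\{i\}$, $i\in I$ distinct, union to the genuine subset $I$, and that proposition needs no restriction $|\Sigma|<q$), and the factorization $\frac{b_m(t_i)}{D_k\ell_{m-k}^{q^k}} = S_m\begin{pmatrix}\{i\}\\1\end{pmatrix}\cdot\frac{\ell_m}{D_k\ell_{m-k}^{q^k}}$ followed by reattachment through \eqref{eq: power sum formula 3} is the right setup. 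But the entire content of the lemma sits in the scalar identity that you leave as an expectation, and the route you sketch for it would not go through as written. Inverting \eqref{eq: Carlitz generating function} gives $\ell_m E_m(x) = -\ell_m\sum_{j\geq 1}(-v)^j$ with $v = \sum_{n\geq 1}\ell_m S_m(n+1)x^n$, so the coefficient of $x^{q^k}$ is a sum of terms $\pm\,\ell_m^{\,j+1}S_m(n_1+1)\cdots S_m(n_j+1)$ with $n_1+\cdots+n_j=q^k$: after linearizing by Proposition \ref{prop: sum-shuffle formula} you obtain equal-degree power sums of weight $q^k+j$, each carrying the $m$-dependent prefactor $\ell_m^{\,j+1}=S_m(1)^{-(j+1)}$, which is neither an $\mathbf{F}_q$-scalar nor independent of $d$, and the weights are wrong (you need $q^k-1$). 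You acknowledge the matching is ``delicate,'' but you supply no mechanism for it; this is a genuine gap, located exactly at the crux.

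The gap is fixable, because the scalar has a clean closed form that your $k=0,1$ computations already point to, and which is precisely the power-sum avatar of Lemma \ref{lem: special zeta}: for all $m\geq k$,
\begin{equation*}
\frac{\ell_m}{D_k\,\ell_{m-k}^{q^k}} \;=\; (-1)^k\,S_{<m}\bigl(q-1,\;(q-1)q,\;\dots,\;(q-1)q^{k-1}\bigr),
\end{equation*}
so that $Q_{d,k}(t_i) = (-1)^k S_{<d}\begin{pmatrix}\{i\}&\emptyset&\cdots&\emptyset\\ 1& q-1&\cdots&(q-1)q^{k-1}\end{pmatrix}$; contrary to your expectation, no corrections beyond this single depth-$(k+1)$ array are needed. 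One proves the display by a short telescoping induction that avoids the generating function altogether: by $S_m(q-1)=\ell_m^{-(q-1)}$ and the Frobenius compatibility $S_{<m}(q\mathtt{s})=S_{<m}(\mathtt{s})^q$, the increment of the right-hand side from $m$ to $m+1$ equals $\ell_m^{-(q-1)}\bigl((-1)^{k-1}\ell_m/(D_{k-1}\ell_{m-k+1}^{q^{k-1}})\bigr)^q$, while the increment of the left-hand side telescopes using $\ell_{m+1}=\ell_m(\theta-\theta^{q^{m+1}})$, $\ell_{m+1-k}^{q^k}/\ell_{m-k}^{q^k}=\theta^{q^k}-\theta^{q^{m+1}}$ and $D_k=D_{k-1}^{q}(\theta^{q^k}-\theta)$; the two agree, and the base cases check directly. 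Since the depth-$(k+1)$ power sum $S_{<m}$ vanishes for $m<k$, the sum defining $Q_{d,k}$ extends over all $m<d$, and with this identity inserted the rest of your argument completes the lemma.
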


\begin{proposition} \label{prop: Z=Z+}
Let $\Sigma$ be a finite subset of $\mathbf{N}^*$ such that $|\Sigma| < q$, and let $w$ be a positive integer. Let $\mathtt{A}$ be an admissible array which is a composition of $\begin{pmatrix}
\Sigma\\
w
\end{pmatrix}$. The following statements hold:
\begin{enumerate} [label=\textnormal{(\roman*)}, align= left]
\item There exist elements $\alpha_i \in \mathbf{F}_q$ and admissible arrays $\mathtt{B}_i$, which are compositions of $\begin{pmatrix}
\Sigma\\
w
\end{pmatrix}$, such that for all $ d\in \mathbf{N}$,
    \begin{equation*}
        S_d(\mathtt{A}) = \sum \limits_i \alpha_i S_d^{\dagger}(\mathtt{B}_i).
    \end{equation*}

\item There exist elements $\alpha'_i \in \mathbf{F}_q$ and admissible arrays $\mathtt{B}'_i$, which are compositions of $\begin{pmatrix}
\Sigma\\
w
\end{pmatrix}$, such that for all $ d\in \mathbf{N}$,
    \begin{equation*}
        S_d^{\dagger}(\mathtt{A}) = \sum \limits_i \alpha'_i S_d(\mathtt{B}'_i).
    \end{equation*}
\end{enumerate}
Here the elements $\alpha_i$ and $\alpha'_i$ are independent of the degree $d$.
\end{proposition}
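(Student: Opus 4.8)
The plan is to prove (i) and (ii) together by induction, using Proposition \ref{prop: explicit formula} as the dictionary between the two kinds of power sums and the product formulas \eqref{eq: power sum formula 3}, Proposition \ref{prop: shuffle relations MZV power sum}, and Lemma \ref{lem: Q = S<d} to reassemble the resulting terms. Two elementary devices are used throughout. First, if an identity $S_d(\mathtt A)=\sum_i\alpha_iS^{\dagger}_d(\mathtt B_i)$ holds for \emph{every} $d\in\mathbf N$ with the $\alpha_i$ independent of $d$, then summing over $d'=0,\dots,d-1$ yields the same identity with $S_d,S^{\dagger}_d$ replaced by $S_{<d},S^{\dagger}_{<d}$; thus an induction hypothesis about the graded pieces transfers for free to the truncations, which is what lets the induction close. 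Second, by the very definition of $S^{\dagger}_d$, the product of a depth-one block $S^{\dagger}_d\begin{pmatrix}J\\ m\end{pmatrix}$ with $S^{\dagger}_{<d}(\mathtt F)$ is the dagger power sum of the array obtained by prepending $\begin{pmatrix}J\\ m\end{pmatrix}$ to $\mathtt F$; the analogous statement for ordinary power sums is exactly \eqref{eq: power sum formula 3}.

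For part (i) I would induct on the weight $w$, the base case being immediate from Proposition \ref{prop: explicit formula} (for $w=1$ there are no correction terms). Writing $\mathtt A=\begin{pmatrix}\Sigma_1&\cdots&\Sigma_r\\ s_1&\cdots&s_r\end{pmatrix}$ and letting $\mathtt A'$ denote its tail, formula \eqref{eq: power sum formula 3} gives $S_d(\mathtt A)=S_d\begin{pmatrix}\Sigma_1\\ s_1\end{pmatrix}S_{<d}(\mathtt A')$. Proposition \ref{prop: explicit formula} expands the depth-one head as the leading term $S_d(s_1)b_d(\Sigma_1)=S^{\dagger}_d\begin{pmatrix}\Sigma_1\\ s_1\end{pmatrix}$ plus corrections, each a product of some $S^{\dagger}_d\begin{pmatrix}J\\ m\end{pmatrix}$ (with $J\subsetneq\Sigma_1$ and $m=s_1-\sum_{i\in I}q^{k_i}>0$) with $\prod_{i\in I}Q_{d,k_i}(t_i)$. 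Rewriting the product of $Q$'s as a combination of $S_{<d}(\mathtt C_j)$ of type $I$ and weight $\sum_{i\in I}q^{k_i}$ via Lemma \ref{lem: Q = S<d}, every summand of $S_d(\mathtt A)$ becomes a dagger head times a product of two $S_{<d}$'s, namely $S_{<d}(\mathtt C_j)S_{<d}(\mathtt A')$ (resp. the single factor $S_{<d}(\mathtt A')$ for the leading term). The key point is that these $S_{<d}$-factors have weight strictly below $w$: for the leading term it is $w-s_1$, while for the corrections the sum-shuffle of $S_{<d}(\mathtt C_j)$ with $S_{<d}(\mathtt A')$ (Proposition \ref{prop: shuffle relations MZV power sum}(ii)) has weight $\sum_{i\in I}q^{k_i}+(w-s_1)<w$ because $m>0$. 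The induction hypothesis (in its $S_{<d}$ form) turns them into combinations of $S^{\dagger}_{<d}$'s, and prepending the dagger head produces daggers whose type is $\Sigma$ and whose weight is $w$, as required.

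For part (ii) the same reduction works once the depth-one case is available, but inverting Proposition \ref{prop: explicit formula} \emph{preserves} the weight $s_1$, so the depth-one statement must instead be proved by a separate induction on the cardinality $|\Sigma_1|$ (base case $\Sigma_1=\emptyset$, where $S^{\dagger}_d\begin{pmatrix}\emptyset\\ s_1\end{pmatrix}=S_d(s_1)$). Rearranging Proposition \ref{prop: explicit formula} gives $S^{\dagger}_d\begin{pmatrix}\Sigma_1\\ s_1\end{pmatrix}=S_d\begin{pmatrix}\Sigma_1\\ s_1\end{pmatrix}-\sum S^{\dagger}_d\begin{pmatrix}J\\ m\end{pmatrix}\prod_{i\in I}Q_{d,k_i}(t_i)$ with $|J|<|\Sigma_1|$; by the cardinality induction each $S^{\dagger}_d\begin{pmatrix}J\\ m\end{pmatrix}$ is a combination of Pellarin power sums $S_d(\mathtt G)$, while $\prod_{i\in I}Q_{d,k_i}(t_i)=\sum_j\alpha_jS_{<d}(\mathtt C_j)$ by Lemma \ref{lem: Q = S<d}, and the mixed products $S_d(\mathtt G)S_{<d}(\mathtt C_j)$ collapse back to Pellarin power sums of type $\Sigma_1$ and weight $s_1$ by Proposition \ref{prop: shuffle relations MZV power sum}(iii). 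With this depth-one inversion in hand, the general case of (ii) follows by induction on weight exactly as in (i): split $S^{\dagger}_d(\mathtt A)=S^{\dagger}_d\begin{pmatrix}\Sigma_1\\ s_1\end{pmatrix}S^{\dagger}_{<d}(\mathtt A')$, invert the head by the depth-one result, convert the weight-$(w-s_1)$ tail $S^{\dagger}_{<d}(\mathtt A')$ to $S_{<d}$'s by the weight hypothesis, and recombine the products by Proposition \ref{prop: shuffle relations MZV power sum}(iii).

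The main obstacle I anticipate is keeping the two interlocking inductions honest. Part (i) is a single clean induction on weight because every correction term of Proposition \ref{prop: explicit formula} carries a factor of strictly smaller weight; part (ii), by contrast, hinges on inverting that formula, which does not lower the weight, so the depth-one inversion must be driven by the auxiliary parameter $|\Sigma_1|$ and must be proved \emph{before} and independently of the weight induction, on pain of circularity. Everything else is routine bookkeeping: checking at each recombination that the types assemble to $\Sigma$ (via $J\sqcup I=\Sigma_1$ and $\Sigma_1\sqcup(\text{type of the tail})=\Sigma$) and that the weights add back up to $w$ (or $s_1$), together with the checks that the scalars produced by Proposition \ref{prop: explicit formula}, Lemma \ref{lem: Q = S<d}, and Proposition \ref{prop: shuffle relations MZV power sum} are all independent of $d$.
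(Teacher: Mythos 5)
Your proof is correct, and its skeleton --- induction with Proposition \ref{prop: explicit formula} as the dictionary between the two kinds of power sums, and Lemma \ref{lem: Q = S<d}, the splitting \eqref{eq: power sum formula 3}, and the shuffle propositions doing the recombination --- is the same as the paper's. The one genuine divergence is in part (ii), and it rests on a claim of yours that is not actually true. You assert that inverting Proposition \ref{prop: explicit formula} ``preserves the weight,'' so that a weight induction for the depth-one case of (ii) would be circular, forcing a separate induction on $|\Sigma_1|$. But rearranging the explicit formula gives
\begin{equation*}
S_d^{\dagger}\begin{pmatrix}\Sigma_1\\ s_1\end{pmatrix}=S_d\begin{pmatrix}\Sigma_1\\ s_1\end{pmatrix}-\sum_{\substack{J \subsetneq \Sigma_1;\ I \sqcup J=\Sigma_1\\ 0<\sum_{i\in I}q^{k_i}<s_1}} S_d^{\dagger}\begin{pmatrix}J\\ s_1-\textstyle\sum_{i\in I}q^{k_i}\end{pmatrix}\prod_{i\in I}Q_{d,k_i}(t_i),
\end{equation*}
and here every dagger term on the right has weight $s_1-\sum_{i\in I}q^{k_i}\leq s_1-1$, because $J\subsetneq\Sigma_1$ forces $I\neq\emptyset$; the only weight-$s_1$ term is $S_d\begin{pmatrix}\Sigma_1\\ s_1\end{pmatrix}$ itself, which is already an ordinary power sum and needs no induction. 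This strict weight drop is exactly what the paper exploits: it proves (i) and (ii) \emph{simultaneously} by a single induction on $w$, splitting the inductive step into depth one (via the explicit formula) and depth $>1$ (via the head--tail splitting), with no auxiliary parameter. Your cardinality induction is nevertheless valid --- $J\subsetneq\Sigma_1$ gives $|J|<|\Sigma_1|$ just as surely as it gives the weight drop --- so nothing in your argument fails; it is just an unnecessary detour, and the circularity you guard against does not arise. A further cosmetic difference: in part (i) you recombine via the ordinary shuffle relation (Proposition \ref{prop: shuffle relations MZV power sum}(ii)) together with the definitional prepending identity for dagger sums, whereas the paper first converts the $Q$-products into dagger truncated sums and then invokes Proposition \ref{prop: shuffle relations DMZV power sum}; both routes are sound and of comparable length.
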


\begin{proof}
    We proceed the proof by induction on $w$. For the case $w = 1$, i.e., $\mathtt{A} = \begin{pmatrix}
\Sigma\\
1
\end{pmatrix}$, it follows from Proposition \ref{prop: explicit formula} that
\begin{equation*}
    S_d\begin{pmatrix}
 \Sigma \\
1  \end{pmatrix} = S_d^{\dagger}\begin{pmatrix}
 \Sigma \\
1  \end{pmatrix},
\end{equation*}
which proves the base step. Assume that Proposition \ref{prop: Z=Z+} holds for $w < n$ with $n \in \mathbf{N}$ and $n \geq 2$. We need to show that Proposition \ref{prop: Z=Z+} holds for $w = n$. We consider two cases:

\textit{Case $1$}:  The admissible array $\mathtt{A}$ has depth $1$, i.e., $\mathtt{A} = \begin{pmatrix}
\Sigma\\
n
\end{pmatrix}$. Proposition \ref{prop: explicit formula} shows that
\begin{equation}  \label{eq: Z=Z+}
     S_d \begin{pmatrix}
     \Sigma \\ n
     \end{pmatrix} = S_d^{\dagger} \begin{pmatrix}
     \Sigma \\ n
     \end{pmatrix}  + \sum \limits_{\substack{J \subsetneq \Sigma \\ I \sqcup J =  \Sigma}}  \sum \limits_{\substack{0 \leq k_i \leq d - 1, \ i \in I \\ n -  {\textstyle\sum_{i\in I}} q^{k_i}> 0}} S_d^{\dagger} \begin{pmatrix}
     J \\ n -  \textstyle\sum_{i\in I} q^{k_i}
     \end{pmatrix}\prod \limits_{i \in I} Q_{d,k_i}(t_i).
 \end{equation}
 Since ${\textstyle\sum_{i\in I}} q^{k_i} < n$, the induction hypothesis and Lemma \ref{lem: Q = S<d} shows that there exist elements $\gamma_j \in \mathbf{F}_q$ and admissible arrays $\mathtt{C}_j$, which are compositions of $\begin{pmatrix}
I\\
\textstyle\sum_{i\in I} q^{k_i}
\end{pmatrix}$, such that for all $ d\in \mathbf{N}$, $\prod_{i\in I} Q_{d,k_i}(t_i) = \sum_j \gamma_j S_{<d}^{\dagger}(\mathtt{C}_j)$. Thus Part (i) follows from \eqref{eq: Z=Z+} and Proposition \ref{prop: shuffle relations DMZV power sum}. Since $n - {\textstyle\sum_{i\in I}} q^{k_i} < n$, the induction hypothesis shows that there exist elements $\gamma'_i \in \mathbf{F}_q$ and admissible arrays $\mathtt{C}'_i$, which are compositions of $\begin{pmatrix}
\Sigma\\
w
\end{pmatrix}$, such that for all $ d\in \mathbf{N}$, $S_d^{\dagger} \begin{pmatrix}
     J \\ n -  \textstyle\sum_{i\in I} q^{k_i}
     \end{pmatrix} = \sum_i \gamma'_i S_d(\mathtt{C}'_i)$. Thus Part (ii) follows from \eqref{eq: Z=Z+}, Lemma \ref{lem: Q = S<d} and Proposition \ref{prop: shuffle relations MZV power sum}(iii).

\textit{Case $2$}:  The admissible array $\mathtt{A}$ has depth $> 1$. We may assume that $ \mathtt{A} = \begin{pmatrix}
\Sigma_1 & \dotsb & \Sigma_r\\
s_1 & \dotsb & s_r
\end{pmatrix}$, so that $s_1 + \cdots + s_r = n$. Since $s_1 < n$ and $s_2 + \cdots + s_r < n$, the induction hypothesis shows that there exist elements $\gamma_i, \gamma'_i \in \mathbf{F}_q$ and admissible arrays $\mathtt{C}_i$, which are compositions of $\begin{pmatrix}
\Sigma_1\\
s_1
\end{pmatrix}$, and admissible arrays $\mathtt{C}'_i$, which are compositions of $\begin{pmatrix}
\Sigma_2 \sqcup \dotsb \sqcup \Sigma_r\\
s_2 + \dotsb + s_r
\end{pmatrix}$, such that for all $ d\in \mathbf{N}$, $S_d\begin{pmatrix}
\Sigma_1\\
s_1
\end{pmatrix} = \sum \limits_i \gamma_i S_d^{\dagger}(\mathtt{C}_i)$ and $S_{<d}\begin{pmatrix}
\Sigma_2 & \dotsb & \Sigma_r\\
s_2 & \dotsb & s_r
\end{pmatrix} = \sum \limits_i \gamma'_i S_{<d}^{\dagger}(\mathtt{C}'_i)$. Thus Part (i) follows from \eqref{eq: power sum formula 3} and Proposition \ref{prop: shuffle relations DMZV power sum}. From similar arguments as above, one may verify that Part (ii) holds in this case.
\end{proof}

\begin{proof}[Proof of Proposition \ref{prop: Z = Z+}]
The result follows immediately from Proposition \ref{prop: Z=Z+} by letting $d$ tend to infinity.    
\end{proof}

%%%%%%%%%%%%%%%%%%%%%%

\section{Structure of $\ker(\mathcal{G}_{\Sigma})$} \label{sec: counterexamples}

% Let $\Sigma$ be a finite weighted subset of $\mathbf{N}^*$. Throughout this section, we restrict our attention to the case 
% $\Sigma$ is a usual finite subset of $\mathbf{N}^*$ with $|\Sigma| < q$ (see Remark \ref{rmk: finite subsets as fws}). We shall first review the notion of \textit{trivial multiple zeta values} introduced by Gezmis and Pellarin, and then give some counterexamples for \cite[Conjecture 6.15]{GP21} due to Gezmis and Pellarin.

\subsection{Some constructions}  \label{subsec: multiple polylogarithms}
${}$
\par 
Let $\Sigma$ be a fixed finite subset of $\mathbf{N}^*$ such that $|\Sigma| < q$. In this subsection, we briefly review the constructions of the maps $\mathcal{F}_{\Sigma}$ and $\mathcal{E}_{\Sigma}$ introduced by Gezmis and Pellarin in \cite{GP21}. 

Let $\{X_n\}_{n \in \mathbf{N}^*}$ be a sequence of indeterminates. For each finite subset $U$ of $\mathbf{N}^*$, we write $X_{U} = \prod_{i \in U} X_i$. For any admissible array $\mathtt{A} =  \begin{pmatrix}
\Sigma_1 & \dotsb & \Sigma_r\\
s_1 & \dotsb & s_r
\end{pmatrix}$ of type $\Sigma$, we define the \textit{multiple polylogarithm} associated with the array $\mathtt{A}$ as
\begin{align*}
 \lambda_A(\mathtt{A}) 
 % = \sum\limits_{d_1 > \cdots > d_r \geq 0} S_{d_1}(s_1) X_{\Sigma_1}^{q^{d_1}} \cdots S_{d_r}(s_r) X_{\Sigma_r}^{q^{d_r}}
 = \sum \limits_{\substack{a_i \in A^+\\  \deg a_1> \dotsb > \deg a_r \geq 0}}  \dfrac{X_{\Sigma_1}^{q^{\deg a_1}} \cdots X_{\Sigma_r}^{q^{\deg a_r}}}{a_1^{s_1} \cdots a_r^{s_r}} \in \mathbf{T}_{\Sigma}(\mathbf{C}_{\infty}).
\end{align*}

Recall that $\mathbf{E}_{\Sigma}$ is the $\mathbf{C}_{\infty}$-subalgebra of $\mathbf{T}_{\Sigma}(\mathbf{C}_{\infty})$ consisting of all entire functions in variables $t_i$ with $i \in \Sigma$. We let $\mathcal{Z}_{\Sigma}(K)$ (resp. $\mathcal{L}_{\Sigma}(K)$) denote the $K$-vector subspace of $\mathbf{E}_{\Sigma}$ generated by elements $\zeta_A(\mathtt{A})$ (resp. $\lambda_A(\mathtt{A})$), where $\mathtt{A}$ ranges over all admissible arrays of type $\Sigma$. When $\Sigma = \emptyset$, one verifies at once that $\mathcal{Z}_{\emptyset}(K) = \mathcal{L}_{\emptyset}(K)$, which is the $K$-algebra generated by Thakur's MZVs. Let $\mathcal{Z}_{n,\Sigma}(K)$ (resp. $\mathcal{L}_{n,\Sigma}(K)$) denote the $K$-vector subspace of $\mathbf{E}_{\Sigma}$ generated by elements $\zeta_A(\mathtt{A})$ (resp. $\lambda_A(\mathtt{A})$), where $\mathtt{A}$ ranges over all admissible arrays of weight $n$ and type $\Sigma$. Gezmis and Pellarin showed in \cite{GP21} that both $\mathcal{Z}_{\Sigma}(K)$ and $\mathcal{L}_{\Sigma}(K)$ are graded $\mathcal{Z}_{\emptyset}(K)$-module with grading given by the weight, so that 
 \begin{equation*}
   \mathcal{Z}_{\Sigma}(K) = \bigoplus \limits_{n = 0}^{\infty} \mathcal{Z}_{n,\Sigma}(K) \quad \text{and} \quad  \mathcal{L}_{\Sigma}(K) = \bigoplus \limits_{n = 0}^{\infty} \mathcal{L}_{n,\Sigma}(K).
 \end{equation*}
 We recall the construction of the morphisms $\mathcal{F}_{\Sigma},\mathcal{E}_{\Sigma}:\mathcal{Z}_{\Sigma}(K)\to\mathcal{L}_{\Sigma}(K)$ from \cite{GP21}.
\begin{theorem}[Theorem 5.2 \cite{GP21}]\label{zeta} Let $\Sigma$ be a finite subset of $\mathbf{N}^*$ such that $|\Sigma|<q$.
Let $f\in \mathcal{Z}_{\Sigma}(K)$ then $$\mathcal{F}_{\Sigma}(f):=\sum_{\underline{i}\in\mathbf{N}^{|\Sigma|}}\sum_{\underline{j}\leq\underline{i}\in\mathbf{N}^{|\Sigma|}}\frac{f(\theta^{q^{\underline{j}}})}{D_{\underline{j}}\ell_{\underline{i}-\underline{j}}^{q^{\underline{j}}}}X_{\Sigma}^{q^{\underline{i}}}\in \mathcal{L}_{\Sigma}(K),$$
where $\underline{i}-\underline{j}$ is the difference of $\underline{i}$ and $\underline{j}$ in the additive group $\mathbf{Z}^{|\Sigma|}$, $X_{\Sigma}^{q^{\underline{i}}}=\prod_{r\in\Sigma}X_j^{q^{i_r}}$, $\theta^{q^{\underline{j}}}=(\theta^{q^{j_r}})_{r\in\Sigma}$, $D_{\underline{j}}=\prod_{r\in\Sigma}D_{j_r}$ and $\ell_{\underline{j}}^{q^{\underline{i}}}=\prod_{r\in\Sigma}\ell_{j_r}^{q^{i_r}}$.
Moreover, $\mathcal{F}_{\Sigma}:\mathcal{Z}_{\Sigma}(K)\to\mathcal{L}_{\Sigma}(K)$ is an isomorphism of graded $\mathcal{Z}_{\emptyset}(K)$-modules.
\end{theorem}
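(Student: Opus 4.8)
The plan is to evaluate $\mathcal{F}_{\Sigma}$ on an explicit spanning set and read off all the assertions at once. By Proposition \ref{prop: Z = Z+} the space $\mathcal{Z}_{\Sigma}(K)$ equals $\mathcal{Z}^{\dagger}_{\Sigma}(K)$, so it is spanned over $\mathcal{Z}_{\emptyset}(K)$ by the dagger values $\zeta^{\dagger}_A(\mathtt{A})$; these are the natural generators to feed into the formula because the factors $b_{d}(\Sigma_\ell)$ make their values at the interpolation points $\theta^{q^{\underline{j}}}$ vanish in a controlled, triangular way. The central ingredient is the single-variable orthogonality identity
\[
\sum_{j=d}^{m}\frac{b_d(\theta^{q^j})}{D_j\,\ell_{m-j}^{q^j}}=\delta_{d,m}\qquad(0\le d\le m),
\]
which I would prove from \eqref{eq: ed-formula} and Proposition \ref{prop: Gos}, using $\ell_0=1$ and the identification $D_j=b_j(\theta^{q^j})$: the diagonal term $j=d=m$ is already $b_m(\theta^{q^m})/D_m=1$, while the off-diagonal vanishing for $d<m$ is a partial-fraction/residue computation that can be run by induction on $m-d$.

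With this identity in hand I would compute $\mathcal{F}_{\Sigma}$ on the generators. Using the Frobenius identity $a(\theta^{q^k})=a^{q^k}$ for $a\in A$, the value of $\zeta^{\dagger}_A(\mathtt{A})$ at $\theta^{q^{\underline{j}}}$ factors, for each block $\Sigma_\ell$ of $\mathtt{A}$, through $\prod_{i\in\Sigma_\ell}b_{d_\ell}(\theta^{q^{j_i}})$, which forces $j_i\ge d_\ell$. Since both $D_{\underline{j}}$ and $\ell_{\underline{i}-\underline{j}}^{q^{\underline{j}}}$ factor as products over $i\in\Sigma$, the inner sum over $\underline{j}\le\underline{i}$ splits into a product of single-variable sums, each collapsing by the orthogonality identity to a Kronecker delta matching the block-degree $d_\ell$ with the corresponding coordinate of $\underline{i}$. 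Imposing these deltas pins every coordinate of $\underline{i}$ within a block to the common degree $d_\ell$ and reproduces exactly the defining series of $\lambda_A(\mathtt{A})$; that is, $\mathcal{F}_{\Sigma}(\zeta^{\dagger}_A(\mathtt{A}))=\lambda_A(\mathtt{A})$. This one computation simultaneously shows that the double series converges to an element of $\mathcal{L}_{\Sigma}(K)$, that $\mathcal{F}_{\Sigma}$ is surjective (it hits the generators of $\mathcal{L}_{\Sigma}(K)$), that it preserves weight, and that it is $\mathcal{Z}_{\emptyset}(K)$-linear, since elements of $\mathcal{Z}_{\emptyset}(K)$ carry no $t_i$ and hence pull out of every evaluation $f(\theta^{q^{\underline{j}}})$.

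It remains to prove injectivity, which I expect to be the main obstacle. The formula shows that $\mathcal{F}_{\Sigma}(f)$ depends on $f$ only through the values $(f(\theta^{q^{\underline{j}}}))_{\underline{j}}$, and that the coefficient of $X_{\Sigma}^{q^{\underline{i}}}$ is $\sum_{\underline{j}\le\underline{i}}f(\theta^{q^{\underline{j}}})/(D_{\underline{j}}\ell_{\underline{i}-\underline{j}}^{q^{\underline{j}}})$, a system triangular in $\underline{i}$ for the partial order $\le$ on $\mathbf{N}^{|\Sigma|}$ with invertible diagonal entries $1/D_{\underline{i}}$. Induction on $\sum_r i_r$ then gives $\mathcal{F}_{\Sigma}(f)=0\Rightarrow f(\theta^{q^{\underline{j}}})=0$ for all $\underline{j}$, so that $\ker\mathcal{F}_{\Sigma}=\{f\in\mathcal{Z}_{\Sigma}(K):f(\theta^{q^{\underline{j}}})=0\ \forall\underline{j}\}$. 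The subtlety is that a general entire function can vanish at every $\theta^{q^{\underline{j}}}$ without being zero, since these points run off to infinity; hence injectivity genuinely uses that the weight-$n$ pieces $\mathcal{Z}_{n,\Sigma}(K)$ are finite-dimensional and highly constrained. I would close the gap by a dimension count: $\mathcal{F}_{\Sigma}\colon\mathcal{Z}_{n,\Sigma}(K)\to\mathcal{L}_{n,\Sigma}(K)$ is a surjection of finite-dimensional $K$-vector spaces, so it suffices to match their dimensions, which I would obtain by constructing the inverse interpolation explicitly from the $b_d$-expansion — equivalently, by identifying $\mathcal{F}_{\Sigma}$ with the map $\mathcal{E}_{\Sigma}$, whose construction makes the reconstruction of $f$ from its values $f(\theta^{q^{\underline{j}}})$, and hence the injectivity, manifest.
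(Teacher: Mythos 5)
The paper itself does not prove this statement: it is quoted verbatim from \cite{GP21} (Theorem 5.2 there) as an imported ingredient, so there is no internal proof to compare against and your argument has to stand on its own. Your forward computation does stand. The orthogonality identity is correct and provable exactly with the tools you name: since $b_d(\theta^{q^j})/D_j=1/D_{j-d}^{q^d}$ and $\ell_{m-j}^{q^j}=\bigl(\ell_{m-j}^{q^{j-d}}\bigr)^{q^d}$, the sum equals $E_{m-d}(1)^{q^d}$ by \eqref{eq: ed-formula}, which is $\delta_{d,m}$ by Proposition \ref{prop: Gos}. Feeding in the dagger generators then gives $\mathcal{F}_{\Sigma}(\zeta_A^{\dagger}(\mathtt{A}))=\lambda_A(\mathtt{A})$, which yields membership in $\mathcal{L}_{\Sigma}(K)$, surjectivity, weight-preservation and $\mathcal{Z}_{\emptyset}(K)$-linearity; and your use of Proposition \ref{prop: Z = Z+} is not circular here, because the present paper reproves that result in Section \ref{sec: a formula for power sums} independently of Theorem \ref{zeta}.

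The genuine gap is injectivity, and both of your proposed fixes fail as stated. First, identifying $\mathcal{F}_{\Sigma}$ with $\mathcal{E}_{\Sigma}$ cannot make injectivity ``manifest'': by Theorem \ref{lambda}, $\mathcal{E}_{\Sigma}(f)$ is also built solely from the values $f(\theta^{q^{\underline{i}}})$, so it annihilates exactly the same candidate kernel $\{f\in\mathcal{Z}_{\Sigma}(K): f(\theta^{q^{\underline{j}}})=0 \text{ for all } \underline{j}\}$; no identification between the two maps can exclude a nonzero function vanishing at every interpolation point, which is precisely the issue you yourself raised. Second, the dimension count is circular: once surjectivity of $\mathcal{F}_{\Sigma}\colon\mathcal{Z}_{n,\Sigma}(K)\to\mathcal{L}_{n,\Sigma}(K)$ is known, the equality of dimensions \emph{is} injectivity, and you offer no independent computation of either dimension (``constructing the inverse'' is the whole problem, since well-definedness of a map sending $\lambda_A(\mathtt{A})$ back to $\zeta_A^{\dagger}(\mathtt{A})$ requires knowing that linear relations transfer, i.e., injectivity again).

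What actually closes the gap — and you already have every piece — is the $b$-expansion argument. By Proposition \ref{prop: Z = Z+}, every $f\in\mathcal{Z}_{\Sigma}(K)$ is a finite $K$-linear combination of dagger values, hence admits an expansion $f=\sum_{\underline{d}}\beta_{\underline{d}}\prod_{i\in\Sigma}b_{d_i}(t_i)$ converging in $\mathbf{E}_{\Sigma}$ (so that termwise evaluation at any point of $\mathbf{C}_{\infty}^{|\Sigma|}$ is legitimate; Gauss-norm convergence alone would not allow evaluation at $\theta^{q^{\underline{j}}}$, which lies outside the unit polydisk). Since $b_d(\theta^{q^j})=0$ for $j<d$ and $b_j(\theta^{q^j})=D_j\neq 0$, the values $f(\theta^{q^{\underline{j}}})$ are related to the coefficients $\beta_{\underline{d}}$ by a triangular system with invertible diagonal, so vanishing of all values forces all $\beta_{\underline{d}}=0$ and hence $f=0$. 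Combined with your (correct) observation that the coefficients of $\mathcal{F}_{\Sigma}(f)$ determine the values $f(\theta^{q^{\underline{j}}})$ triangularly, this proves $\ker\mathcal{F}_{\Sigma}=0$. With that paragraph added, your proof is complete and follows the same route as the source \cite{GP21}.
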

\begin{theorem}[Corollary 6.9 \cite{GP21}]\label{lambda}Let $\Sigma$ be a finite subset of $\mathbf{N}^*$ such that $|\Sigma|<q$.
Let $f\in \mathcal{Z}_{\Sigma}(K)$ then $$\mathcal{E}_{\Sigma}(f)=\sum_{\underline{i}\in\mathbf{N}^{|\Sigma|}}\frac{f(\theta^{q^{\underline{i}}})}{D_{\underline{i}}}\prod_{j\in\Sigma}\lambda_{A}\begin{pmatrix}
\{j\} \\1
\end{pmatrix}^{q^{i_j}}\in \mathcal{L}_{\Sigma}(K)$$
and $\mathcal{E}_{\Sigma}=\mathcal{F}_{\Sigma}$.
\end{theorem}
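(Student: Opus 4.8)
The plan is to establish the equality $\mathcal{E}_\Sigma=\mathcal{F}_\Sigma$ by a direct expansion of the defining formula for $\mathcal{E}_\Sigma(f)$, matched against the formula for $\mathcal{F}_\Sigma(f)$ recorded in Theorem \ref{zeta}. Once the equality is in hand, the assertion $\mathcal{E}_\Sigma(f)\in\mathcal{L}_\Sigma(K)$ is automatic, since Theorem \ref{zeta} already guarantees $\mathcal{F}_\Sigma(f)\in\mathcal{L}_\Sigma(K)$. Thus the entire content reduces to an identity between two ways of organizing one array of $K$-coefficients.

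First I would record the closed form of the basic depth-one polylogarithm. Grouping its defining series by degree and using the well-known value $S_d(1)=1/\ell_d$, we obtain
\[
\lambda_A\begin{pmatrix}\{j\}\\1\end{pmatrix}=\sum_{a\in A^+}\frac{X_j^{q^{\deg a}}}{a}=\sum_{d\geq 0}S_d(1)X_j^{q^d}=\sum_{d\geq 0}\frac{X_j^{q^d}}{\ell_d}.
\]
Since $q$ is a power of the characteristic, raising to the $q^{i_j}$-th power is additive, so, after the substitution $m_j=d+i_j$,
\[
\lambda_A\begin{pmatrix}\{j\}\\1\end{pmatrix}^{q^{i_j}}=\sum_{d\geq 0}\frac{X_j^{q^{d+i_j}}}{\ell_d^{q^{i_j}}}=\sum_{m_j\geq i_j}\frac{X_j^{q^{m_j}}}{\ell_{m_j-i_j}^{q^{i_j}}}.
\]
Taking the product over $j\in\Sigma$ and invoking the multi-index conventions of Theorem \ref{zeta} then gives
\[
\prod_{j\in\Sigma}\lambda_A\begin{pmatrix}\{j\}\\1\end{pmatrix}^{q^{i_j}}=\sum_{\underline{m}\geq\underline{i}}\frac{X_\Sigma^{q^{\underline{m}}}}{\ell_{\underline{m}-\underline{i}}^{q^{\underline{i}}}}.
\]

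The crucial structural observation is that in both $\mathcal{E}_\Sigma(f)$ and $\mathcal{F}_\Sigma(f)$ the only monomials occurring in the $X_j$ are the ``diagonal'' ones $X_\Sigma^{q^{\underline{m}}}=\prod_{j\in\Sigma}X_j^{q^{m_j}}$ indexed by $\underline{m}\in\mathbf{N}^{|\Sigma|}$. This lets me avoid any global rearrangement of the doubly-infinite family and instead compare the two expressions coefficient by coefficient. Substituting the product formula above into the definition of $\mathcal{E}_\Sigma(f)$ and collecting the terms contributing to a fixed $X_\Sigma^{q^{\underline{m}}}$, the coefficient of $X_\Sigma^{q^{\underline{m}}}$ equals
\[
\sum_{\underline{i}\leq\underline{m}}\frac{f(\theta^{q^{\underline{i}}})}{D_{\underline{i}}\,\ell_{\underline{m}-\underline{i}}^{q^{\underline{i}}}},
\]
a \emph{finite} sum over the box $\{\underline{i}:\underline{i}\leq\underline{m}\}$. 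This is verbatim the coefficient of $X_\Sigma^{q^{\underline{m}}}$ in the expression for $\mathcal{F}_\Sigma(f)$ from Theorem \ref{zeta}, after renaming the summation index $\underline{i}\mapsto\underline{j}$ and the monomial index $\underline{m}\mapsto\underline{i}$. As this holds for every $\underline{m}$, I conclude $\mathcal{E}_\Sigma(f)=\mathcal{F}_\Sigma(f)$, hence $\mathcal{E}_\Sigma=\mathcal{F}_\Sigma$.

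I do not anticipate a genuine obstacle: the argument is a bookkeeping identity between two organizations of the same coefficients. The one point requiring care is the legitimacy of the coefficient-by-coefficient comparison. One must verify that expanding $\prod_{j}\lambda_A\!\begin{pmatrix}\{j\}\\1\end{pmatrix}^{q^{i_j}}$ produces \emph{only} monomials of the form $X_\Sigma^{q^{\underline{m}}}$, and that for each fixed $\underline{m}$ exactly the finitely many $\underline{i}\leq\underline{m}$ contribute, so that the comparison is a finite identity rather than a rearrangement of an infinite sum. Granting the entireness already built into Theorem \ref{zeta}, this reduces the whole statement to the elementary reindexing carried out above.
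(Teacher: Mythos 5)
First, a point of comparison: this paper never proves Theorem \ref{lambda} at all --- it is imported verbatim from \cite[Corollary 6.9]{GP21}, and the independent construction of $\mathcal{E}_{\Sigma}$ given there (which the introduction says ``is defined differently'') is never recalled here. So your proposal can only be measured against the statement itself. Its combinatorial core is correct and is surely the heart of the matter: $S_d(1)=1/\ell_d$ gives $\lambda_A\bigl(\begin{smallmatrix}\{j\}\\ 1\end{smallmatrix}\bigr)=\sum_{d\ge 0}X_j^{q^d}/\ell_d$, raising to $q^{i_j}$-th powers and multiplying over $j\in\Sigma$ produces only monomials $X_{\Sigma}^{q^{\underline{m}}}$ with $\underline{m}\ge \underline{i}$, and the coefficient of a fixed $X_{\Sigma}^{q^{\underline{m}}}$ is the finite sum $\sum_{\underline{i}\le \underline{m}} f(\theta^{q^{\underline{i}}})/(D_{\underline{i}}\,\ell_{\underline{m}-\underline{i}}^{q^{\underline{i}}})$, which matches Theorem \ref{zeta} exactly. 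Keep in mind, though, that what you prove is the identity with the displayed series taken as the \emph{definition} of $\mathcal{E}_{\Sigma}$; the cited corollary of \cite{GP21} asserts in addition that their independently constructed map admits this expansion. For this paper's purposes the distinction is harmless, since only the displayed formula is ever used.

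The genuine gap is convergence, and your closing paragraph dismisses it on inadequate grounds. That each monomial $X_{\Sigma}^{q^{\underline{m}}}$ receives contributions from only the finitely many $\underline{i}\le\underline{m}$ makes the sum well defined as a \emph{formal} series, but it does not make $\sum_{\underline{i}} \frac{f(\theta^{q^{\underline{i}}})}{D_{\underline{i}}}\prod_j\lambda_A\bigl(\begin{smallmatrix}\{j\}\\ 1\end{smallmatrix}\bigr)^{q^{i_j}}$ converge for the Gauss valuation: for instance, the family $u_i=X^{q^i}-X^{q^{i+1}}$ has the same finiteness property and formal sum $X$, yet $\sum_i u_i$ diverges in the Tate algebra because every term has norm $1$. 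Since each factor $\lambda_A\bigl(\begin{smallmatrix}\{j\}\\ 1\end{smallmatrix}\bigr)^{q^{i_j}}$ has Gauss norm $1$, genuine convergence is equivalent to the decay $|f(\theta^{q^{\underline{i}}})|/|D_{\underline{i}}|\to 0$, and this is really needed: the theorem asserts an equality in $\mathcal{L}_{\Sigma}(K)\subset \mathbf{T}_{\Sigma}(\mathbf{C}_{\infty})$, and the paper immediately applies the evaluation map $\mathrm{ev}$ (which is continuous for the Gauss valuation) term by term to this very series to obtain \eqref{Gsig}; with only a coefficientwise identity that interchange is unjustified. The missing decay can in fact be extracted from Theorem \ref{zeta}: writing $a_{\underline{i}}=f(\theta^{q^{\underline{i}}})/D_{\underline{i}}$ and $c_{\underline{m}}$ for the coefficients of $\mathcal{F}_{\Sigma}(f)$, one has the unipotent triangular system $c_{\underline{m}}=\sum_{\underline{i}\le\underline{m}}a_{\underline{i}}/\ell_{\underline{m}-\underline{i}}^{q^{\underline{i}}}$ with $c_{\underline{m}}\to 0$ (because $\mathcal{F}_{\Sigma}(f)$ lies in the Tate algebra), and its inverse is again unipotent triangular with $|(T^{-1})_{\underline{m},\underline{i}}|\le\prod_{r:\,i_r<m_r}q^{-q^{m_r}}$, from which $a_{\underline{m}}\to 0$ follows by a short estimate. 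Either this inversion argument, or a citation of the corresponding convergence statement in \cite{GP21}, has to be added; ``granting the entireness already built into Theorem \ref{zeta}'' does not by itself supply it.
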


 % We now restrict our attention to the case $\Sigma$ is a usual finite subset of $\mathbf{N}^*$ with $|\Sigma| < q$. We consider the map
 % \begin{equation*}
 %     \mathcal{F}_{\Sigma} \colon \mathcal{Z}_{\Sigma}(K) \rightarrow  \mathcal{L}_{\Sigma}(K),
 % \end{equation*}
 % which maps 
 
%%%%%%%%%%%%%%%%%%%%%%%%%%%%%%%%

\subsection{Trivial multiple zeta values} 
${}$
\par We continue with the same notation as in the preceding sections. We let $\mathcal{Z}^{\text{triv}}_{\Sigma,n}(K)$ denote the set of all elements $f \in \mathcal{Z}_{\Sigma,n}(K)$ satisfies $f(\theta^{q^{\underline{k}}}) = 0$ for all but finitely many tuples $\underline{k} \in \mathbf{N}^{|\Sigma|}$. One verifies easily that $\mathcal{Z}^{\text{triv}}_{\Sigma,n}(K)$ is a $K$-vector subspace of $ \mathcal{Z}_{\Sigma,n}(K)$. Moreover, we have
\begin{equation*}
    \mathcal{Z}^{\text{triv}}_{\Sigma}(K) = \bigoplus_{n = 0}^{\infty}\mathcal{Z}^{\text{triv}}_{\Sigma,n}(K)
\end{equation*}
is a $ \mathcal{Z}_{\emptyset}(K)$-submodule of $ \mathcal{Z}_{\Sigma}(K)$. 
\begin{definition}Let $\text{ev}:\mathcal{L}_{\Sigma}(K)\to \mathcal{Z}_{\emptyset}(K)$ be the evaluation map sending $X_i$ to $1$ for all $i\in\Sigma$. Then, we denote by $\mathcal{G}_{\Sigma}$  the map $\text{ev}\circ\mathcal{F}_{\Sigma}|_{\mathcal{Z}^{\text{triv}}_{\Sigma}(K)}$.
\end{definition}
 
 We recall the following result due to Gezmis and Pellarin. 
\begin{lemma} \label{lem: special zeta}
For all natural numbers $k$, we have
\begin{equation*}
    \frac{(-1)^k b_k(t)\zeta_A \begin{pmatrix}
    t\\ q^k
    \end{pmatrix}}{D_k} = \zeta_A  \begin{pmatrix}
    t & \emptyset &  \cdots & \emptyset\\ 1 & q-1 &  \cdots &  (q-1)q^{k-1} 
    \end{pmatrix}.
\end{equation*}
\end{lemma}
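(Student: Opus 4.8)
The plan is to expand both sides of the asserted identity in the ``Newton basis'' $\{b_n(t)\}_{n\ge 0}$ attached to the nodes $\theta^{q^m}$, and then to match coefficients. Both sides are entire functions of $t$, and the structural point I would exploit is that each is a series of the shape $\sum_{n\ge 0}\gamma_n b_n(t)$: since $b_n(\theta^{q^m})=0$ for $n>m$ while $b_m(\theta^{q^m})=D_m\neq 0$, evaluation at the nodes $\theta^{q^m}$ recovers the coefficients $\gamma_n$ through a triangular recursion, so two such functions coincide exactly when their coefficient sequences agree. This turns the (transcendental) identity into a problem about two explicit coefficient sequences.

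For the left-hand side I would first pin down $S_d\begin{pmatrix}t\\q^k\end{pmatrix}$. Starting from the depth-one evaluation $S_d\begin{pmatrix}t\\1\end{pmatrix}=b_d(t)/\ell_d$ and the elementary fact $a(t)^{q^k}=a(t^{q^k})$ for $a\in A$, raising to the $q^k$-th power gives $S_d\begin{pmatrix}t^{q^k}\\q^k\end{pmatrix}=(b_d(t)/\ell_d)^{q^k}$ as an identity of polynomials in $t$; as $t\mapsto t^{q^k}$ is injective on monomials, this determines $S_d\begin{pmatrix}t\\q^k\end{pmatrix}$ itself, namely
\[
S_d\begin{pmatrix}t\\q^k\end{pmatrix}=\frac{1}{\ell_d^{q^k}}\prod_{i=k}^{d+k-1}(t-\theta^{q^i})=\frac{b_{d+k}(t)}{b_k(t)\,\ell_d^{q^k}}.
\]
Hence $\tfrac{(-1)^k b_k(t)}{D_k}\zeta_A\begin{pmatrix}t\\q^k\end{pmatrix}=\tfrac{(-1)^k}{D_k}\sum_{n\ge k}\ell_{n-k}^{-q^k}\,b_n(t)$, so its Newton coefficient is $(-1)^k/(D_k\ell_{n-k}^{q^k})$ for $n\ge k$ and $0$ otherwise.

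For the right-hand side, \eqref{eq: power sum formula 3} together with $S_d\begin{pmatrix}t\\1\end{pmatrix}=b_d(t)/\ell_d$ writes the degree-$d_0$ power sum of the defining array as $\tfrac{b_{d_0}(t)}{\ell_{d_0}}W_k(d_0)$, where $W_k(d_0)=S_{<d_0}(q-1,(q-1)q,\dots,(q-1)q^{k-1})$ is a $t$-free Carlitz quantity; summing over $d_0$ gives the Newton coefficient $W_k(n)/\ell_n$. Applying \eqref{eq: power sum formula 3} once more with the Frobenius relation $S_{<e}(qs_1,\dots,qs_r)=S_{<e}(s_1,\dots,s_r)^q$ yields the recursion $W_k(n)=\sum_{e<n}\ell_e^{-(q-1)}W_{k-1}(e)^q$ with $W_0\equiv 1$. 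Comparing the two coefficient sequences collapses the whole statement to the single $t$-free identity
\[
W_k(n)=\frac{(-1)^k\,\ell_n}{D_k\,\ell_{n-k}^{q^k}}\qquad(n\ge k),
\]
the supports agreeing automatically for $n<k$ since $W_k(n)=0$ there.

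I expect this last identity to be the main obstacle, but it should yield to a double induction. For $k=1$ one proves $\sum_{e=0}^{n-1}\ell_e^{-(q-1)}=(\theta^{q^n}-\theta)/((\theta^q-\theta)\ell_{n-1}^{q-1})$ by induction on $n$, the step collapsing via the Frobenius identity $(\theta^{q^n}-\theta)^q=\theta^{q^{n+1}}-\theta^q$. For the passage $k-1\to k$ one substitutes the inductive formula into the recursion for $W_k$ and uses $D_k=(\theta^{q^k}-\theta)D_{k-1}^q$; after the sign and $D$ factors cancel, the claim reduces to $\sum_{m=0}^{n-k}\ell_{m+k-1}\,\ell_m^{-q^k}=-\ell_n/((\theta^{q^k}-\theta)\ell_{n-k}^{q^k})$, which follows by a one-line induction resting on $\ell_{N+1}^{q^k}=\ell_N^{q^k}(\theta^{q^k}-\theta^{q^{N+k+1}})$. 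The only point requiring genuine care is the bookkeeping of Frobenius twists: each nested summation defining $W_k$ contributes one extra power of $q$, and it is precisely this accumulation that matches the exponent $q^k$ in the denominator of the target.
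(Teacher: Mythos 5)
Your proposal is correct, and it is genuinely different from what the paper does: the paper gives no argument at all for this lemma, deferring entirely to \cite[Lemma 6.12]{GP21}, whereas you produce a self-contained proof from ingredients the paper already has on hand (Pellarin's depth-one evaluation $S_d\begin{pmatrix}t\\1\end{pmatrix}=b_d(t)/\ell_d$, Thakur's $S_d(s)=\ell_d^{-s}$ for $s\le q$, the factorization \eqref{eq: power sum formula 3}, and the Frobenius twist). I checked the key computations and they all hold: the Frobenius argument does give $S_d\begin{pmatrix}t\\q^k\end{pmatrix}=b_{d+k}(t)/\bigl(b_k(t)\ell_d^{q^k}\bigr)$, so the left side is $\sum_{n\ge k}(-1)^k b_n(t)/(D_k\ell_{n-k}^{q^k})$; the right side is $\sum_{n}W_k(n)b_n(t)/\ell_n$ with $W_k(n)=S_{<n}(q-1,(q-1)q,\dotsc,(q-1)q^{k-1})$ satisfying your recursion; and both auxiliary inductions close, the $k=1$ step via $(\theta^{q^n}-\theta)(\theta-\theta^{q^n})^{q-1}=\theta^{q^{n+1}}-\theta^q$ and the general step via the telescoping identity $-(\theta^{q^k}-\theta^{q^{n+1}})+(\theta^{q^k}-\theta)=\theta^{q^{n+1}}-\theta$ combined with $\ell_{n+1-k}^{q^k}=\ell_{n-k}^{q^k}(\theta^{q^k}-\theta^{q^{n+1}})$. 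Two cosmetic remarks: the uniqueness of the Newton expansion that you invoke at the outset is never actually needed, since you only use the trivial direction (equal coefficient sequences give equal functions); and in odd characteristic the step $\bigl((-1)^{k-1}\bigr)^q=(-1)^{k-1}$ deserves the one-word justification that $q$ is odd (in characteristic $2$ signs are vacuous). What your route buys, beyond independence from \cite{GP21}, is the explicit closed form $S_{<n}\bigl(q-1,(q-1)q,\dotsc,(q-1)q^{k-1}\bigr)=(-1)^k\ell_n/(D_k\ell_{n-k}^{q^k})$, which is of independent interest and makes the interpolation property in Lemma \ref{lem: eta} transparent; the cost is a somewhat longer double induction where the citation is instantaneous.
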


\begin{proof}
    See \cite[Lemma 6.12]{GP21}.
\end{proof}

\begin{lemma} \label{lem: eta}
For all natural numbers $k$, we set
\begin{equation*}
    \eta_k(t) = \frac{b_k(t)\zeta_A \begin{pmatrix}
    t\\ q^k
    \end{pmatrix}}{D_k}.
\end{equation*}
Then 
\begin{align*}
    \eta_k(\theta^{q^i}) = \begin{cases}  1 & \quad \text{if } i = k, \\
0 & \quad \text{if } i \ne k.
\end{cases}
\end{align*}
\end{lemma}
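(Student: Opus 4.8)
The plan is to compute $\eta_k(\theta^{q^i})$ directly from its definition by evaluating each factor at $t = \theta^{q^i}$. Recall that $\eta_k(t) = b_k(t)\zeta_A\begin{pmatrix} t \\ q^k \end{pmatrix}/D_k$, so I need to understand the behavior of the numerator $b_k(t)\zeta_A\begin{pmatrix} t \\ q^k \end{pmatrix}$ at the point $t = \theta^{q^i}$. The key observation is that $b_k(t) = \prod_{j=0}^{k-1}(t - \theta^{q^j})$ vanishes precisely when $t = \theta^{q^j}$ for some $0 \le j \le k-1$; in particular $b_k(\theta^{q^i}) = 0$ whenever $0 \le i \le k-1$, which handles a portion of the $i \ne k$ cases immediately provided the zeta factor stays finite there.

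First I would recall that $\zeta_A\begin{pmatrix} t \\ q^k \end{pmatrix} = \sum_{d \ge 0} S_d\begin{pmatrix} \{i\} \\ q^k \end{pmatrix}$ converges as an entire function, and I would try to reduce the evaluation at $t = \theta^{q^i}$ to an explicit power-sum computation. The cleanest route is to use the depth-one instance of Proposition \ref{prop: explicit formula}, which for $|\Sigma| = 1$ and $n + 1 = q^k$ gives a formula for $S_d\begin{pmatrix} \{i\} \\ q^k \end{pmatrix}$ in terms of the scalar power sums $S_d(\cdot)$, the polynomials $b_d(t)$, and the $Q_{d,\kappa}(t)$. Evaluating $b_k(t)\,S_d\begin{pmatrix} \{i\} \\ q^k \end{pmatrix}$ at $t = \theta^{q^i}$ and summing over $d$ should collapse to something that is $D_k$ when $i = k$ and $0$ otherwise. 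The arithmetic reason I expect this to work is that at $t = \theta^{q^i}$ the character $\chi_t$ becomes a Frobenius twist: $\chi_t(a)|_{t = \theta^{q^i}} = a(\theta^{q^i}) = a^{q^i}$ for $a \in A$, so the Pellarin value degenerates into an ordinary Carlitz-type sum whose leading behavior is governed by known closed forms for $S_d$ of prime-power arguments (e.g. $S_d(q^k) = 1/\ell_d^{q^k}$-type expressions from the Carlitz generating function \eqref{eq: Carlitz generating function}).

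Concretely, I would proceed as follows. For the case $0 \le i < k$: here $b_k(\theta^{q^i}) = 0$ since the factor $(t - \theta^{q^i})$ appears in $b_k(t)$, and $\zeta_A\begin{pmatrix} t \\ q^k \end{pmatrix}$ is entire hence finite at $t = \theta^{q^i}$, so the product vanishes and $\eta_k(\theta^{q^i}) = 0$. For the case $i = k$ and for $i > k$, the factor $b_k$ no longer vanishes, so I must extract the actual value of the zeta series. Using the substitution $\chi_t(a) = a^{q^i}$ at $t = \theta^{q^i}$, I would rewrite $\zeta_A\begin{pmatrix} t \\ q^k \end{pmatrix}\big|_{t = \theta^{q^i}} = \sum_{a \in A^+} a^{q^i}/a^{q^k} = \sum_{a \in A^+} 1/a^{q^k - q^i} = \zeta_A(q^k - q^i)$ when $i < k$ (matching the vanishing) and interpret the exponent carefully when $i \ge k$; then combine with $b_k(\theta^{q^i})$ and $D_k$. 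The expected clean identities are $b_k(\theta^{q^k}) = \prod_{j=0}^{k-1}(\theta^{q^k} - \theta^{q^j}) = D_k$, which is exactly the defining product for $D_k$, giving $\eta_k(\theta^{q^k}) = D_k \cdot \zeta_A\begin{pmatrix} t \\ q^k \end{pmatrix}(\theta^{q^k})/D_k$, and the residual zeta factor must then equal $1$.

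The main obstacle I anticipate is pinning down the value of $\zeta_A\begin{pmatrix} t \\ q^k \end{pmatrix}$ at $t = \theta^{q^i}$ for $i \ge k$, because the naive substitution $\chi_t(a) = a^{q^i}$ produces an exponent $q^k - q^i \le 0$, so the series $\sum 1/a^{q^k - q^i}$ is no longer a convergent Carlitz zeta value in the usual range and the interchange of limit (evaluation at $t = \theta^{q^i}$) with the infinite sum over $d$ needs justification via the entireness from \cite[Proposition 4]{Pel16}. I would resolve this by working at the level of finite power sums $S_d$, where the evaluation is purely polynomial, using Proposition \ref{prop: explicit formula} to show that $b_k(\theta^{q^i})\,S_d\begin{pmatrix}\{i\}\\q^k\end{pmatrix}$ telescopes across $d$ so that only the $d = k$ term survives for $i = k$ (yielding $D_k$ after normalization) and everything cancels for $i \ne k$; the properties $E_d(\theta^d) = 1$ and $E_d(a) = 0$ for $\deg a < d$ from Proposition \ref{prop: Gos} together with \eqref{eq: values of P_d(t,x)} are the levers that force this collapse. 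Alternatively, and more economically, I expect this lemma to follow formally from Lemma \ref{lem: special zeta} combined with the construction of $\mathcal{E}_{\Sigma}$ in Theorem \ref{lambda}, since the orthogonality $\eta_k(\theta^{q^i}) = \delta_{ik}$ is precisely the statement that the $\eta_k$ form a dual basis adapted to the evaluation points $\theta^{q^i}$, which is implicitly what makes the $\mathcal{E}_{\Sigma}$ formula well-defined.
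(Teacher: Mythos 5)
Your opening moves coincide with the paper's: the substitution $\chi_t(a)\big|_{t=\theta^{q^i}}=a(\theta^{q^i})=a^{q^i}$, the vanishing $b_k(\theta^{q^i})=0$ for $0\le i<k$ (with finiteness of the zeta factor guaranteed by entireness), and the identity $b_k(\theta^{q^k})=D_k$. But the two remaining cases, which are the substance of the lemma, are not actually proved. For $i=k$ you assert that ``the residual zeta factor must then equal $1$''; what is needed is the one-line computation $\zeta_A \begin{pmatrix} t\\ q^k \end{pmatrix}\Big|_{t=\theta^{q^k}}=\sum_{d\ge 0}S_d(0)=\sum_{d\ge 0}\#A^+(d)=\sum_{d\ge 0}q^d=1$ in characteristic $p$ (only the $d=0$ term survives), i.e.\ $\zeta_A(0)=1$, which you never carry out.

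The genuine gap is the case $i>k$. The paper disposes of it by recognizing the evaluation as the Goss zeta value $\zeta_A(q^k-q^i)$ at a \emph{negative} integer, noting that $(q-1)\mid(q^k-q^i)$ (since $q\equiv 1 \pmod{q-1}$), and invoking the trivial zeros of the Goss zeta function at negative integers divisible by $q-1$ (\cite[Theorem 5.3]{Gos79}). Your proposal never identifies this arithmetic input. In its place you offer a telescoping of $b_k(\theta^{q^i})S_d$ over $d$ via Proposition \ref{prop: explicit formula}, claiming that ``everything cancels for $i\ne k$'' with Proposition \ref{prop: Gos} and \eqref{eq: values of  P_d(t,x)} as ``levers''; but no cancellation is exhibited and no reason is given why one should occur, so this is a restatement of the claim rather than an argument (making it precise would amount to reproving Goss's trivial-zero theorem). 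Your ``more economical'' alternative is also not viable: Lemma \ref{lem: special zeta} merely rewrites $\eta_k$ as a specific multiple zeta value, and nothing in the construction of $\mathcal{E}_{\Sigma}$ in Theorem \ref{lambda} forces $\eta_k(\theta^{q^i})=\delta_{ik}$ --- the well-definedness of $\mathcal{E}_{\Sigma}$ holds for all of $\mathcal{Z}_{\Sigma}(K)$ and does not encode any dual-basis property; rather, the orthogonality proved in this lemma is what is \emph{used} downstream (in Theorem \ref{countex}), so deducing it from those results is circular.
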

\begin{proof}
We first note that 
\begin{equation*}
    \zeta_A \begin{pmatrix}
    t\\ q^k
    \end{pmatrix} \Biggr|_{t  = \theta^{q^i}} = \sum \limits_{a \in A^{+}} \frac{a(\theta)^{q^i}}{a^{q^k}} = \sum \limits_{a \in A^{+}} \frac{1}{a^{q^k - q^i}} = \zeta_A( q^k - q^i).
\end{equation*}
If $i = k$, then $b_k(\theta^{q^k}) = D_k $ and $\zeta_A \begin{pmatrix}
    t\\ q^k
    \end{pmatrix}\Biggr|_{t  = \theta^{q^k}} = \zeta_A( 0) = 1$, hence $\eta_k(\theta^{q^k}) = 1$. If $i <k$, then $b_k(\theta^{q^i}) = 0$, hence $\eta_k(\theta^{q^i}) = 0$. If $i > k$, then $q^k - q^i$ is a negative integer satisfies $(q - 1) | (q^k - q^i)$, hence $\zeta_A \begin{pmatrix}
    t\\ q^k
    \end{pmatrix} \Biggr|_{t  = \theta^{q^i}} =  \zeta_A( q^k - q^i) = 0 $ (See \cite[Theorem 5.3]{Gos79}), showing that $\eta_k(\theta^{q^i}) = 0$. This proves the lemma.
\end{proof}
\begin{corollary} \label{generators} 
For each tuples $\underline{k} \in \mathbf{N}^{|\Sigma|}$, we set
\begin{equation*}
    \eta_{\underline{k}} = \prod\limits_{j \in \Sigma} \eta_{k_j}(t_j).
\end{equation*}
Then $ \eta_{\underline{k}} $ is an element in $\mathcal{Z}^{\rm triv \rm}_{\Sigma,\sum_{j \in \Sigma}q^{k_j}}(K)$. Moreover, $\mathcal{Z}^{\rm triv \rm}_{\Sigma}(K)$ is equal to its $\mathcal{Z}_{\emptyset}(K)$- submodule generated by $\{\eta_{\underline{k}}| \underline{k}\in  \mathbf{N}^{|\Sigma|}\}$.
\end{corollary}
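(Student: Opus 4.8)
The plan is to treat the two assertions separately, the second being the substantial one, and to build the generation statement on an interpolation argument powered by the delta-function behavior of Lemma~\ref{lem: eta}.

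For the membership claim, I would first use Lemma~\ref{lem: special zeta} to identify $\eta_{k}(t)$ with the genuine Pellarin MZV $(-1)^{k}\zeta_A\begin{pmatrix} t & \emptyset & \cdots & \emptyset\\ 1 & q-1 & \cdots & (q-1)q^{k-1}\end{pmatrix}$, whose weight telescopes as $1+(q-1)(1+q+\cdots+q^{k-1})=q^{k}$. Thus each factor $\eta_{k_j}(t_j)$ is a Pellarin MZV of type $\{j\}$ and weight $q^{k_j}$, and $\eta_{\underline k}=\prod_{j\in\Sigma}\eta_{k_j}(t_j)$ is a product of Pellarin MZVs over pairwise disjoint singleton types. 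Iterating the power-sum sum-shuffle of Proposition~\ref{prop: shuffle relations MZV power sum} (whose depth-one specialization is Theorem~\ref{thm: sum-shuffle formula}) expresses this product as a $K$-linear combination of Pellarin MZVs of type $\Sigma$ and weight $\sum_{j\in\Sigma}q^{k_j}$, placing $\eta_{\underline k}$ in $\mathcal{Z}_{\Sigma,\sum q^{k_j}}(K)$. Triviality is then immediate from Lemma~\ref{lem: eta}, since $\eta_{\underline k}(\theta^{q^{\underline m}})=\prod_{j\in\Sigma}\eta_{k_j}(\theta^{q^{m_j}})$ equals the Kronecker symbol $\delta_{\underline k,\underline m}$ and hence vanishes at every tuple except $\underline m=\underline k$.

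For the generation claim, the inclusion $\supseteq$ is automatic: every $\eta_{\underline k}$ lies in $\mathcal{Z}^{\mathrm{triv}}_{\Sigma}(K)$ by the first part, and the latter is a $\mathcal{Z}_\emptyset(K)$-submodule. The reverse inclusion is the core. Given $f\in\mathcal{Z}^{\mathrm{triv}}_{\Sigma}(K)$, the defining condition says $f(\theta^{q^{\underline m}})=0$ for all but finitely many $\underline m$, and each surviving value lies in $\mathcal{Z}_\emptyset(K)$ (substituting $t_j=\theta^{q^{m_j}}$ replaces $\chi_{\{j\}}(a)$ by $a^{q^{m_j}}$, turning a Pellarin MZV into a Thakur MZV, a fact already built into the well-definedness of $\mathcal{E}_\Sigma$ in Theorem~\ref{lambda}). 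I would then set
\[
 g=\sum_{\underline m\in\mathbf{N}^{|\Sigma|}} f(\theta^{q^{\underline m}})\,\eta_{\underline m},
\]
a finite $\mathcal{Z}_\emptyset(K)$-combination of the $\eta_{\underline m}$, hence an element of the submodule they generate. The delta property yields $g(\theta^{q^{\underline m}})=\sum_{\underline k} f(\theta^{q^{\underline k}})\,\delta_{\underline k,\underline m}=f(\theta^{q^{\underline m}})$ for every $\underline m$, so $f-g$ vanishes at all points $\theta^{q^{\underline j}}$.

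To finish I would feed $h=f-g\in\mathcal{Z}_{\Sigma}(K)$ into the explicit formula of Theorem~\ref{zeta}, which writes $\mathcal{F}_{\Sigma}(h)$ as a series whose every coefficient is assembled from the values $h(\theta^{q^{\underline j}})$; as these all vanish, $\mathcal{F}_{\Sigma}(f-g)=0$, and the injectivity part of Theorem~\ref{zeta} forces $f=g$. Hence $f$ lies in the submodule generated by the $\eta_{\underline k}$. The main obstacle is not the interpolation itself, which is a clean dual-basis computation once the delta property is secured, but the two structural inputs that license it: that products of Pellarin MZVs of disjoint singleton types remain inside $\mathcal{Z}_{\Sigma}(K)$ with the expected weight, and that the point-evaluations $f(\theta^{q^{\underline m}})$ genuinely return elements of the coefficient ring $\mathcal{Z}_\emptyset(K)$, so that $g$ is a bona fide module element. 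Both are furnished by the Gezmis--Pellarin framework, so the proof reduces to organizing them around the injectivity of $\mathcal{F}_{\Sigma}$.
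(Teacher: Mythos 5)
Your proof is correct, and it differs from the paper's in a meaningful way on the second assertion. For the membership claim your argument is identical to the paper's: identify $\eta_{k_j}(t_j)$ with a genuine Pellarin MZV of weight $q^{k_j}$ via Lemma~\ref{lem: special zeta}, combine the factors using Theorem~\ref{thm: sum-shuffle formula}, and deduce triviality from the delta property of Lemma~\ref{lem: eta}. For the generation claim, however, the paper gives no argument at all: it simply cites \cite[Theorem 6.10]{GP21}. You instead reconstruct a self-contained proof --- interpolate $f$ by $g=\sum_{\underline{m}} f(\theta^{q^{\underline{m}}})\,\eta_{\underline{m}}$ (a finite sum by triviality of $f$), observe that $f-g$ vanishes at every point $\theta^{q^{\underline{j}}}$, and conclude $f=g$ because the explicit formula of Theorem~\ref{zeta} shows $\mathcal{F}_{\Sigma}(f-g)=0$ while $\mathcal{F}_{\Sigma}$ is injective. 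This is essentially the Gezmis--Pellarin argument itself, and it is the same technique this paper deploys later when proving Theorem~\ref{countex}, where equality of elements of $\mathcal{Z}_{\Sigma}(K)$ is likewise checked on the points $\theta^{q^{\underline{j}}}$ ``as in the proof of [GP21, Theorem 6.10]''. What your route buys is self-containedness; what it requires is two inputs from the GP21 framework that the paper absorbs into its citation: that evaluations $f(\theta^{q^{\underline{m}}})$ of elements of $\mathcal{Z}_{\Sigma}(K)$ land in $\mathcal{Z}_{\emptyset}(K)$ (so that $g$ is a bona fide $\mathcal{Z}_{\emptyset}(K)$-combination), and the injectivity of $\mathcal{F}_{\Sigma}$ together with the fact that its coefficients are assembled from those evaluations. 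You flag both explicitly, and both are legitimately available --- indeed the paper itself uses the first one tacitly in the proof of Theorem~\ref{countex}, when it places $a=\sum_{\underline{j}\neq\underline{0}} f(\theta^{q^{\underline{j}}})D_{\underline{j}}^{-1}\prod_{i\in\Sigma}T_i^{q^{j_i}-1}$ in the module $\mathsf{M}$ --- so your argument is sound.
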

\begin{proof} \label{rmk: eta tuple}

It follows from Lemma \ref{lem: special zeta} that for each $j \in \Sigma$,
\begin{equation*}
    \eta_{k_j}(t_j) =(-1)^{k_j}\zeta_A  \begin{pmatrix}
    t_j & \emptyset &  \cdots & \emptyset\\ 1 & q-1 &  \cdots &  (q-1)q^{k_j-1} 
    \end{pmatrix} 
\end{equation*}
Note that $ 1 + (q-1) + \cdots + (q-1)q^{k_j-1} = q^{k_j} $. Thus from Theorem  \ref{thm: sum-shuffle formula}, we have
\begin{equation*}
    \eta_{\underline{k}} = (-1)^{\sum_{j \in \Sigma}k_j} \prod\limits_{j \in \Sigma} \zeta_A  \begin{pmatrix}
    t_j & \emptyset &  \cdots & \emptyset\\ 1 & q-1 &  \cdots &  (q-1)q^{k_j-1} 
    \end{pmatrix} \in \mathcal{Z}_{\Sigma,\sum_{j \in \Sigma}q^{k_j}}(K).
\end{equation*}
 Moreover, one deduces from Lemma \ref{lem: eta} that
\begin{align*}
    \eta_{\underline{k}}(\theta^{q^{\underline{i}}}) = \begin{cases}  1 & \quad \text{if } \underline{i} = \underline{k}, \\
0 & \quad \text{otherwise}.
\end{cases}
\end{align*}
This shows that $ \eta_{\underline{k}} $ is an element in $\mathcal{Z}^{\text{triv}}_{\Sigma,\sum_{j \in \Sigma}q^{k_j}}(K)$. The second part of this corollary was shown in \cite[Theorem 6.10]{GP21}.
\end{proof}

%%%%%%%%%%%%%%%

\subsection{The structure of $\ker(\mathcal{G}_{\Sigma})$} \label{subsec: some counterexamples}
${}$
\par In this subsection, we study the structure of $\mathcal{G}_{\Sigma}$ for a finite subset $\Sigma$ of $\mathbf{N}^*$ with $|\Sigma|<q$. %Note that one deduces from \cite[Corollary 6.9]{GP21} another formula for $\mathcal{F}_{\Sigma}$ as follow: for each element $f \in \mathcal{Z}_{\Sigma}(K)$, we have
%\begin{equation} \label{eq: another formula of f sigma}
%    \mathcal{F}_{\Sigma}(f) = \sum\limits_{\underline{i}\in \mathbf{N}^{|\Sigma|}} \frac{f(\theta^{q^{\underline{i}}})}{D_{\underline{i}}} \prod \limits_{j \in \Sigma} \alpha_A\begin{pmatrix}
   % X_j \\ 1
   % \end{pmatrix}^{q^{i_j}}.
%\end{equation}

Note that $\text{ev}(\lambda_{A}\begin{pmatrix}
\{j\} \\1
\end{pmatrix})=\zeta_A(1)$. Therefore, Theorem \ref{lambda} implies that
%Since \cite[Formula (12)]{GP21}, one has
%\begin{align*}
%\mathcal{G}_{\Sigma}(f)&=\sum_{\underline{i}}\sum_{\underline{j}\leq\underline{i}}\frac{f(\theta^{q^{\underline{j}}})}{D_{\underline{j}}\ell_{\underline{i}-\underline{j}}^{q^{\underline{j}}}}\\
%&=\sum_{\underline{j}}(\sum_{\underline{i}}\frac{1}{\ell_{\underline{i}}^{q^{\underline{j}}}})\frac{f(\theta^{q^{\underline{j}}})}{D_{\underline{j}}}\\
%&=\sum_{\underline{j}}(\prod_{r\in \Sigma}(\sum_{i}\frac{1}{\ell_{i}^{q^{j_r}}}))\frac{f(\theta^{q^{\underline{j}}})}{D_{\underline{j}}}\\
%&=\sum_{\underline{j}}(\sum_{i}\frac{1}{\ell_{i}})^{\sum_{r\in \Sigma}q^{j_r}}\frac{f(\theta^{q^{\underline{j}}})}{D_{\underline{j}}}.
%\end{align*}
%Since Equality (\ref{eq: Carlitz generating function}) and the definition of $\zeta_A$, one has 
%$$\sum_{d\geq 0}\frac{1}{\ell_{d}}=\sum_{d\geq 0}S_d(1)=\zeta_A( 1).$$
%Thus 
\begin{equation}\label{Gsig}
\mathcal{G}_{\Sigma}(f)=\sum_{\underline{j}}\zeta_A( 1)^{\sum_{r\in \Sigma}q^{j_r}}\frac{f(\theta^{q^{\underline{j}}})}{D_{\underline{j}}}.
\end{equation}

Now  we will describe $\ker(\mathcal{G}_{\Sigma})$. In order to do this, let $\mathsf{M}$ be the $\mathcal{Z}_{\emptyset}(K)$-submodule of $\mathcal{Z}_{\emptyset}(K)[T_i]_{i\in\Sigma}$ generated by $\{\prod_{i\in\Sigma}T_i^{q^{j_i}-1}|\underline{0}\neq\underline{j}=(j_i)_{i\in\Sigma}\in\mathbf{N}^{|\Sigma|}\}$. We consider the  morphism $\Phi:\mathsf{M}\to \mathcal{Z}^{\text{triv}}_{\Sigma}(K)$ of $\mathcal{Z}_{\emptyset}(K)$-modules given by
 $$\Phi(\prod_{i\in\Sigma}T_i^{q^{j_i}-1})=-\zeta_A( 1)^{\sum_{i\in \Sigma}(q^{j_i}-1)}\eta_{\underline{0}}+D_{\underline{j}}\eta_{\underline{j}}$$
for all $\underline{0}\neq\underline{j}=(j_i)_{i\in\Sigma}$.
 \begin{theorem}\label{countex}The map $\Phi$ is an injection whose image is $\ker(\mathcal{G}_{\Sigma})$.
 \end{theorem}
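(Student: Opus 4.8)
The plan is to reduce the statement to a linear-algebra computation over the integral domain $\mathcal{Z}_{\emptyset}(K)$, exploiting that the family $\{\eta_{\underline{k}}\}$ generates $\mathcal{Z}^{\text{triv}}_{\Sigma}(K)$ over $\mathcal{Z}_{\emptyset}(K)$ (Corollary~\ref{generators}) and that $\mathcal{G}_{\Sigma}$ is \emph{diagonal} on this family. Concretely, I would first combine \eqref{Gsig} with the evaluation property $\eta_{\underline{k}}(\theta^{q^{\underline{i}}})=\delta_{\underline{i},\underline{k}}$ of Corollary~\ref{generators} (cf. Lemma~\ref{lem: eta}): only the summand $\underline{j}=\underline{k}$ survives, so
$$\mathcal{G}_{\Sigma}(\eta_{\underline{k}})=\frac{\zeta_A(1)^{\sum_{r\in\Sigma}q^{k_r}}}{D_{\underline{k}}}, \qquad \text{in particular} \quad \mathcal{G}_{\Sigma}(\eta_{\underline{0}})=\zeta_A(1)^{|\Sigma|},$$
since $D_{\underline{0}}=1$ and $\sum_{r\in\Sigma}q^{0}=|\Sigma|$. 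To get $\im(\Phi)\subseteq\ker(\mathcal{G}_{\Sigma})$, I apply $\mathcal{G}_{\Sigma}$ to each generator:
$$\mathcal{G}_{\Sigma}\!\left(\Phi\big(\textstyle\prod_{i\in\Sigma}T_i^{q^{j_i}-1}\big)\right)=-\zeta_A(1)^{\sum_{i\in\Sigma}(q^{j_i}-1)}\,\zeta_A(1)^{|\Sigma|}+D_{\underline{j}}\cdot\frac{\zeta_A(1)^{\sum_{r\in\Sigma}q^{j_r}}}{D_{\underline{j}}},$$
and the identity $\sum_{i\in\Sigma}(q^{j_i}-1)+|\Sigma|=\sum_{i\in\Sigma}q^{j_i}$ forces the two terms to cancel. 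As $\Phi$ and $\mathcal{G}_{\Sigma}$ are $\mathcal{Z}_{\emptyset}(K)$-linear, this handles all of $\mathsf{M}$.

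For injectivity I would use that $\mathsf{M}$ is free over $\mathcal{Z}_{\emptyset}(K)$ on the monomials $\prod_{i\in\Sigma}T_i^{q^{j_i}-1}$ (distinct tuples $\underline{j}$ give distinct exponents, hence distinct monomials). If $g=\sum_{\underline{j}\neq\underline{0}}c_{\underline{j}}\prod_{i\in\Sigma}T_i^{q^{j_i}-1}$ satisfies $\Phi(g)=0$, then evaluating $\Phi(g)$ at $t=\theta^{q^{\underline{m}}}$ for a fixed $\underline{m}\neq\underline{0}$ annihilates every $\eta_{\underline{0}}$-contribution (because $\eta_{\underline{0}}(\theta^{q^{\underline{m}}})=0$) and, by the diagonal evaluation, leaves $c_{\underline{m}}D_{\underline{m}}=0$. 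Since $D_{\underline{m}}\neq0$ and $\mathcal{Z}_{\emptyset}(K)$ is a domain, $c_{\underline{m}}=0$ for every $\underline{m}\neq\underline{0}$, so $g=0$.

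Finally, for $\ker(\mathcal{G}_{\Sigma})\subseteq\im(\Phi)$, I take $f=\sum_{\underline{k}}a_{\underline{k}}\eta_{\underline{k}}\in\ker(\mathcal{G}_{\Sigma})$ (such a representation exists by Corollary~\ref{generators}) and set $g=\sum_{\underline{j}\neq\underline{0}}D_{\underline{j}}^{-1}a_{\underline{j}}\prod_{i\in\Sigma}T_i^{q^{j_i}-1}$, which lies in $\mathsf{M}$ because $\mathcal{Z}_{\emptyset}(K)$ contains $K\ni D_{\underline{j}}^{-1}$. A direct expansion gives
$$\Phi(g)=\sum_{\underline{j}\neq\underline{0}}a_{\underline{j}}\eta_{\underline{j}}-\left(\sum_{\underline{j}\neq\underline{0}}\frac{a_{\underline{j}}\,\zeta_A(1)^{\sum_{i\in\Sigma}(q^{j_i}-1)}}{D_{\underline{j}}}\right)\eta_{\underline{0}},$$
so it suffices to identify the coefficient of $\eta_{\underline{0}}$ with $a_{\underline{0}}$. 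Multiplying that coefficient by $\zeta_A(1)^{|\Sigma|}$ and again using $\sum_{i}(q^{j_i}-1)+|\Sigma|=\sum_i q^{j_i}$ rewrites it as $-\sum_{\underline{j}\neq\underline{0}}a_{\underline{j}}\zeta_A(1)^{\sum_{r}q^{j_r}}/D_{\underline{j}}$, which by the relation $\mathcal{G}_{\Sigma}(f)=0$ (split off the $\underline{k}=\underline{0}$ term) equals $a_{\underline{0}}\zeta_A(1)^{|\Sigma|}$. Cancelling the nonzero factor $\zeta_A(1)^{|\Sigma|}$ yields $\Phi(g)=f$.

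The main obstacle I anticipate is not any single computation, each being a short manipulation, but rather assembling the structural prerequisites cleanly and justifying the divisions and cancellations: that $\mathcal{Z}_{\emptyset}(K)$ is an integral domain containing $K$ (legitimizing $D_{\underline{j}}^{-1}$ and the cancellation of $\zeta_A(1)^{|\Sigma|}$), that $\zeta_A(1)\neq0$, and that the diagonal evaluation $\eta_{\underline{k}}(\theta^{q^{\underline{i}}})=\delta_{\underline{i},\underline{k}}$ can be deployed simultaneously to read off $\mathcal{G}_{\Sigma}$ on generators and to separate variables in the injectivity step. Conceptually, the whole argument is the observation that $\mathcal{G}_{\Sigma}$ is diagonal in the free family $\{\eta_{\underline{k}}\}$ while $\Phi$ realizes precisely the module of syzygies, each generator encoding the single relation pairing a tuple $\underline{j}\neq\underline{0}$ against $\underline{0}$.
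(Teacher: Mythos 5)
Your proposal is correct, and two of its three steps are essentially the paper's own arguments: injectivity by evaluating at $\theta^{q^{\underline{m}}}$ for $\underline{m}\neq\underline{0}$ (where only $c_{\underline{m}}D_{\underline{m}}$ survives), and $\im(\Phi)\subseteq\ker(\mathcal{G}_{\Sigma})$ via the cancellation forced by $\sum_{i\in\Sigma}(q^{j_i}-1)+|\Sigma|=\sum_{i\in\Sigma}q^{j_i}$ (the paper runs this computation on a general element of $\mathsf{M}$ using formula (\ref{Gsig}), you run it generator-by-generator and invoke $\mathcal{Z}_{\emptyset}(K)$-linearity; this is cosmetic). Where you genuinely diverge is the inclusion $\ker(\mathcal{G}_{\Sigma})\subseteq\im(\Phi)$. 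The paper defines the candidate preimage $a=\sum_{\underline{j}\neq\underline{0}}\frac{f(\theta^{q^{\underline{j}}})}{D_{\underline{j}}}\prod_{i\in\Sigma}T_i^{q^{j_i}-1}$ using only that $f$ is a trivial MZV (finitely many nonzero values at the points $\theta^{q^{\underline{j}}}$), and then proves $f=\Phi(a)$ by checking equality of values at all such points, appealing to an interpolation/rigidity principle ``as in the proof of \cite[Theorem 6.10]{GP21}'' --- in substance the injectivity of $\mathcal{F}_{\Sigma}$ from Theorem \ref{zeta}, since $\mathcal{F}_{\Sigma}(f)$ depends only on those values. You instead take a representation $f=\sum_{\underline{k}}a_{\underline{k}}\eta_{\underline{k}}$ supplied by Corollary \ref{generators} and verify $\Phi(g)=f$ by matching coefficients of the $\eta_{\underline{k}}$'s, which requires no rigidity statement; note that your $g$ is literally the paper's $a$, since $f(\theta^{q^{\underline{j}}})=a_{\underline{j}}$ by Lemma \ref{lem: eta}. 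What your route buys: it uses only the statement of the generation theorem rather than its proof technique, and your opening observation that $\mathcal{G}_{\Sigma}$ is diagonal on the $\eta_{\underline{k}}$'s with values $\zeta_A(1)^{\sum_{r\in\Sigma}q^{k_r}}/D_{\underline{k}}$ makes structurally transparent why $\ker(\mathcal{G}_{\Sigma})$ is exactly this syzygy module (and it also yields Theorem \ref{thm: main B} at once). What the paper's route buys: it never needs to express $f$ in terms of the generators, working directly from the definition of $\mathcal{Z}^{\text{triv}}_{\Sigma}(K)$. The auxiliary facts you flag --- that $\mathcal{Z}_{\emptyset}(K)$ is a $K$-algebra inside the field $K_{\infty}$, that $\zeta_A(1)\neq 0$ (its $v_{\infty}$-valuation is $0$), and the $\mathcal{Z}_{\emptyset}(K)$-linearity of $\mathcal{G}_{\Sigma}$ (from Theorem \ref{zeta} together with linearity of the evaluation map) --- all hold and are used implicitly by the paper as well, so they pose no obstruction.
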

\begin{proof}
Let $a=\sum_{\underline{0}\neq\underline{j}=(j_i)_{i\in\Sigma}}a_{\underline{j}}\prod_{i\in\Sigma}T_i^{q^{j_i}-1}\in \mathsf{M}$ with $a_{\underline{j}}=0$ for all but finitely many $\underline{j}$. Suppose that $a_{\underline{j}}\neq 0$ for some $\underline{j}$. Lemma  \ref{lem: eta} and the definition of $\Phi$
 imply that $\Phi(a)(\theta^{q^{\underline{j}}})=D_{\underline{j}}a_{\underline{j}}\neq 0$. Therefore $\Phi$ is injective. 

It remains to show that $\im(\Phi)=\ker(\mathcal{G}_{\Sigma})$. First of all, we will show that $\im(\Phi)\subset\ker(\mathcal{G}_{\Sigma})$. In other words, we need to prove that $\mathcal{G}_{\Sigma}(\Phi(a))=0$ for all $a\in \mathsf{M}$. Let $a=\sum_{\underline{0}\neq\underline{j}=(j_i)_{i\in\Sigma}}a_{\underline{j}}\prod_{i\in\Sigma}T_i^{q^{j_i}-1}\in \mathsf{M}$, where $a_{\underline{j}}=0$ for all but finitely many $\underline{j}$. Because of Equality (\ref{Gsig}), Lemma \ref{lem: eta}, and the definition of $\Phi$, one has
\begin{align*}
\mathcal{G}_{\Sigma}(\Phi(a))&=\sum_{\underline{j}}\zeta_A( 1)^{\sum_{r\in \Sigma}q^{j_r}}\frac{\Phi(a)(\theta^{q^{\underline{j}}})}{D_{\underline{j}}}\\
&=\zeta_A( 1)^{|\Sigma|}\frac{\Phi(a)(\theta^{q^{\underline{0}}})}{D_{\underline{0}}}+\sum_{\underline{0}\neq\underline{j}}\zeta_A( 1)^{\sum_{r\in \Sigma}q^{j_r}}\frac{\Phi(a)(\theta^{q^{\underline{j}}})}{D_{\underline{j}}}\\
&=-\zeta_A( 1)^{|\Sigma|}\sum_{\underline{0}\neq\underline{j}=(j_i)_{i\in\Sigma}}a_{\underline{j}}\zeta_A( 1)^{\sum_{i\in \Sigma}(q^{j_i}-1)}+\sum_{\underline{0}\neq\underline{j}}\zeta_A( 1)^{\sum_{r\in \Sigma}q^{j_r}}\frac{\Phi(a)(\theta^{q^{\underline{j}}})}{D_{\underline{j}}}\\
&=-\sum_{\underline{0}\neq\underline{j}=(j_i)_{i\in\Sigma}}a_{\underline{j}}\zeta_A( 1)^{\sum_{i\in \Sigma}q^{j_i}}+\sum_{\underline{0}\neq\underline{j}}\zeta_A( 1)^{\sum_{r\in \Sigma}q^{j_r}}\frac{\Phi(a)(\theta^{q^{\underline{j}}})}{D_{\underline{j}}}\\
&=-\sum_{\underline{0}\neq\underline{j}=(j_i)_{i\in\Sigma}}a_{\underline{j}}\zeta_A( 1)^{\sum_{i\in \Sigma}q^{j_i}}+\sum_{\underline{0}\neq\underline{j}}\zeta_A( 1)^{\sum_{r\in \Sigma}q^{j_r}}a_{\underline{j}}\\
&=0.
\end{align*}
Thus $\im(\Phi)\subset\ker(\mathcal{G}_{\Sigma})$. Inversely, if $f\in \ker(\mathcal{G}_{\Sigma})$ then $f(\theta^{q^{\underline{j}}})=0$ for all but finitely many $\underline{j}$, thus the element $$a=\sum_{\underline{j}\neq\underline{0}}\frac{f(\theta^{q^{\underline{j}}})}{D_{\underline{j}}}\prod_{i\in\Sigma}T_i^{q^{j_i}-1}$$ belongs to $\mathsf{M}.$
We will prove that $f=\Phi(a)$. Indeed, as in the proof of \cite[Theorem 6.10]{GP21}, it suffices to show that $$f(\theta^{q^{\underline{j}}})=\Phi(a)(\theta^{q^{\underline{j}}})$$
for all $\underline{j}$. Since the definition of $\Phi$ and Lemma \ref{lem: eta},  this fact is trivial if $\underline{j}\neq\underline{0}$. If $\underline{j}=0$, we use the definition of $\Phi$ and Lemma \ref{lem: eta} again to have that
\begin{align*}
\Phi(a)(\theta^{q^{\underline{0}}})&=-\sum_{\underline{j}\neq 0}\zeta_A( 1)^{\sum_{i\in \Sigma}(q^{j_i}-1)}\frac{f(\theta^{q^{\underline{j}}})}{D_{\underline{j}}}\\
&=-\zeta_A( 1)^{-|\Sigma|}\sum_{\underline{j}\neq\underline{0}}\zeta_A( 1)^{\sum_{i\in \Sigma}q^{j_i}}\frac{f(\theta^{q^{\underline{j}}})}{D_{\underline{j}}}.
\end{align*}
On the other hand, since $\mathcal{G}_{\Sigma}(f)=0$ and Equality (\ref{Gsig}), one has $$-\zeta_A( 1)^{-|\Sigma|}\sum_{\underline{j}\neq\underline{0}}\zeta_A( 1)^{\sum_{i\in \Sigma}q^{j_i}}\frac{f(\theta^{q^{\underline{j}}})}{D_{\underline{j}}}=f(\theta^{q^{\underline{0}}}).$$
Thus $f(\theta^{q^{\underline{0}}})=\Phi(a)(\theta^{q^{\underline{0}}})$ and $\im(\Phi)\supset\ker(\mathcal{G}_{\Sigma})$. Therefore we can conclude that $\im(\Phi)=\ker(\mathcal{G}_{\Sigma})$.
\end{proof}
\begin{proof}[Proof of Theorem \ref{thm: main A}]This follows immediately from Theorem \ref{countex} and the definition of the map $\Phi:\mathsf{M}\to \mathcal{Z}^{\text{triv}}_{\Sigma}(K)$.
\end{proof}
\begin{proof}[Proof of Corollary \ref{ninjective}] The claim is trivial by Theorem \ref{countex} and the fact that $\mathsf{M}\neq 0$.
\end{proof}
\begin{proof}[Proof of Theorem \ref{thm: main B}] Because of Corollary \ref{generators} and Theorem \ref{thm: main A}, $\im(\mathcal{G}_{\Sigma})$ is the ideal of $\mathcal{Z}_{\emptyset}(K)$ generated by $\mathcal{G}_{\Sigma}(\eta_{\underline{0}})$. Since Equality (\ref{Gsig}) and Lemma \ref{lem: eta}, one has $$\mathcal{G}_{\Sigma}(\eta_{\underline{0}})=\zeta_A(1)^{|\Sigma|}.$$
This finishes our proof.
\end{proof}
\begin{example}Suppose that $q>|\Sigma|>1$, let $\underline{j}\neq \underline{j}' \in \mathbf{N}^{|\Sigma|}$ such that $\underline{j}$ is a permutation of $\underline{j'}$. We set $$a=\frac{1}{D_{\underline{j}}}\prod_{i\in\Sigma}T_i^{q^{j_i}-1}-\frac{1}{D_{\underline{j}'}}\prod_{i\in\Sigma}T_i^{q^{j'_i}-1}\in \mathsf{M}\setminus \{0\}.$$
Then $0\neq f=\eta_{\underline{j}}-\eta_{\underline{j}'}=\Phi(a)\in \ker(\mathcal{G}_{\Sigma})$.

Suppose that $|\Sigma|=1$. Let $a=T_1\in \mathsf{M}\setminus\{0\}$ then $0\neq f=-\zeta_A(1)^{q-1}\eta_{0}+D_1\eta_{1}=\Phi(a)\in \ker(\mathcal{G}_{\Sigma})$.
\end{example}

\end{document}